\documentclass[reqno,12pt]{amsart}

\usepackage[marginratio=1:1,height=600pt,width=404pt,tmargin=110pt]{geometry}

\usepackage{amsmath, amsfonts, amssymb, amscd, bezier, amstext, amsthm}
\usepackage{color}
\usepackage{graphicx}
\usepackage{fancyhdr}
\usepackage{array}
\usepackage{esint}
\usepackage{hyperref}
\usepackage{enumitem}

\pretolerance=10000






\newcommand{\R}{\mathbb{R}}





\newcounter{alpha}

\theoremstyle{plain}

\newtheorem{theorem}{Theorem}[section]

\newtheorem{lemma}[theorem]{Lemma}
\newtheorem{corollary}[theorem]{Corollary}
\newtheorem{proposition}[theorem]{Proposition}

\newtheorem{remark}[theorem]{Remark}
\newtheorem{definition}[theorem]{Definition}



\title[Nonlinear elliptic systems with critical exponent]{Singular solutions to Yamabe-type systems  with prescribed asymptotics}


\author[R. Caju]{Rayssa Caju}
\author[J.M do \'O]{Jo\~ao Marcos do \'O}
\author[A. Santos]{Almir Silva Santos}

\address[R. Caju]{Department of Mathematics,
	\newline\indent 
	Federal University of Para\'{\i}ba
	\newline\indent
	58051-900, Jo\~ao Pessoa-PB, Brazil}
\email{\href{mailto:rayssacaju@gmail.com}{rayssacaju@gmail.com}}

\address[J.M. do \'O]{Department of Mathematics,
	\newline\indent 
	Federal University of Para\'{\i}ba
	\newline\indent
	58051-900, Jo\~ao Pessoa-PB, Brazil}
\email{\href{mailto:jmbo@pq.cnpq.br}{jmbo@pq.cnpq.br}}

\address[A. S. Santos]{Department of Mathematics,
	\newline\indent 
	Federal University of Sergipe
	\newline\indent
	49100-000, S\~ao Cristov\~{a}o-SE, Brazil}
\email{\href{mailto:almir@mat.ufs.br}{almir@mat.ufs.br}}
\date{}

\thanks{Research supported in part by INCTmat/MCT/Brazil, CNPq (305726/2017-0), CAPES and FAPITEC/SE/Brazil.}


\numberwithin{equation}{section}


\begin{document}
	
%
%

\begin{abstract}
Our primary purpose is to study a class of strongly coupled nonlinear elliptic systems with critical growth in a compact Riemannian manifold with constant scalar curvature. Using a gluing technique and perturbation arguments, we show the existence of singular solutions asymptotic to a Fowler-type solution near the isolated singularity.
\end{abstract}

\maketitle

%
%


%
%

\section{Introduction}

%
%

Let $(M^{n}, g)$ be an $n$-dimensional, compact, smooth, Riemannian manifold without boundary and let $p\in M$ be a fixed point. 
Our primary purpose in this paper is to establish the existence of positive singular solutions for a class of strongly coupled elliptic systems involving critical growth in the sense of the Sobolev embedding in the inhomogeneous context of a compact Riemannian manifold. 
Precisely, we look for  positive singular  solutions $\mathcal{U}=(u_1,\dots,u_d) : M \setminus  \{p\} \rightarrow \mathbb{R}^d $ to the following Yamabe-type system, 
\begin{equation}\label{S}\tag{$\mathcal{S}$}
\Delta_{g}u_{i} - \sum_{j=1}^{d}A_{ij}(x)u_{j} + \frac{n(n-2)}{4}|\mathcal{U}|^{\frac{4}{n-2}}u_{i} = 0, \quad i=1,\cdots, d,
\end{equation}
where $\Delta_{g}$ is the Laplace Beltrami operator on $M$ and $|\mathcal{U}|^2=\sum_{i=1}^{d} u_i^2$ is the Euclidean norm of the map $\mathcal{U}$.
Here  $A$ is a smooth map from the manifold $M$ into the vector space of symmetric $d\times d$ real matrices $M_{d}^{s}(\R)$.

%
%

Critical vector-valued Schr\"{o}dinger equations  of the form  \eqref{S} are weakly coupled by the matrix $A$ 
and strongly coupled by the Gross–Pitaevskii type nonlinearity in \eqref{S}. 
We say that $\mathcal U$ is a {\it positive} solution of \eqref{S} if each coordinate $u_i$ is a positive function for $i=1,\ldots,d$.
A smooth solution of  \eqref{S} in  $  M  \backslash \{p\} $ is called singular if has a singularity at the point $p$ in the sense that 
$ \limsup_{x\rightarrow p} |\mathcal{U}(x)|=\infty$.   In \cite{COS}, it was proved that for any singular solution of \eqref{S}, we have 
$ \limsup_{x\rightarrow p} u_i(x)=\infty$ for $i=1,\dots,d$.

%
%

System \eqref{S} for  $d=1$  corresponds to  the classical Yamabe equation, if $A = \frac{n-2}{4(n-1)}R_{g}$, where $R_{g}$ is the scalar curvature of $M$, which is related to the conformal geometry of the manifold.  Precisely, the Yamabe equation naturally appears in the transformation law for the scalar curvature of two metrics in the same conformal class, and it was the first equation with a critical Sobolev growth that was studied in the literature.  Still on the same subject in celebrated works \cite{KazdanWarner,KazdanWarner-JDG} J.~Kazdan and F.~Warner considered the problem
\begin{equation}\label{eq033}
\Delta_{g}u - a(x)u + f(x)u^{\frac{n+2}{n-2}} = 0,
\end{equation}
where $a$ and $f$ are arbitrary functions. 
Motivated by geometric applications and  its own mathematical interest, this type of equations have been extensively investigated in the past few decades under several assumptions on the potential $a(x)$ and in the function $f(x)$. See \cite{Hebeyfrench,HebeyVaugon,AdirmuthiYadava} and references therein. 

%
%

The Yamabe-type system \eqref{S} was initially studied by the works of O.~Druet and E.~Hebey \cite{DH} and O.~Druet, E.~Hebey and J. V\'etois \cite{DHV}. They proved stability properties of nonnegative solutions of \eqref{S} on a compact manifold $M$, under the natural assumption that the potential is related with the geometric threshold potential of the conformal Laplacian. Namely, let
$$A_n:=A-\frac{n-2}{4(n-1)}R_g \mathrm{I}_d,$$
where $R_g$ is the scalar curvature of the metric $g$ and $(\mathrm{I}_d)_{ij}=\delta_{ij}$ is the identity $d\times d$ matrix. In \cite{DH} the authors used the assumption that $A_n$ should not possess stable subspaces with an orthonormal basis consisting of isotropic nonnegative vectors, while in \cite{DHV} they assumed that $A_n<0$ in the sense  of bilinear forms. A similar critical elliptic system in potential form was studied by E. Hebey in \cite{Hebey_2006}.

In \cite{Hebey}, E. Hebey studied nonsingular solution of system \eqref{S}. Assuming some additional conditions, he proved that there exists $\varepsilon=\varepsilon(M,g,d)>0$ such that if $\|A\|<\varepsilon$, then the unique nonsingular solutions of  system \eqref{S} are the constant functions. Also he pointed out that, if $(\mathbb S^n,g_0)$ is the unit sphere with scalar curvature $R_{g_0}=n(n-1)$, by a result of  B.~Gidas and J.~Spruck \cite{Gidas_Spruck_1981} (see also \cite{Brezis_Li_2006}) we have
$$\varepsilon(\mathbb S^n,g_0,d)\leq\frac{n-2}{4(n-1)}R_{g_0}$$
for all $d$, with equality when $d=1$. In view of these works it seems very natural to impose a condition on the decay of $A_n$ close to the singularity. In fact, we will consider the assumption
\begin{flalign}\tag{$H$}\label{H1}
A_n= O(|x|^{\frac{n-3}{2}}) \quad \mbox{as} \quad x \rightarrow p.
\end{flalign}

Solutions of the equation \eqref{eq033} that develop singularities in a certain subset $F\subset M$ of the manifold were extensively studied. It is well known that the existence of such solutions is related with the size of the singular set $F$ and the sign of the scalar curvature. It turns out that the problem of finding singular solutions with isolated singularities revealed more challenging than the other cases. We refer the reader to the papers \cite{Byde, Han_Li_Teixeira_2009,KMPS,Loewner_Nirenberg_1974,MP,MPU,almir} and the references contained therein. 

Since P.-L.~Lions \cite{Lions_1982} raised the question of how far his results on the existence of positive solutions of semilinear elliptic equations may be generalized to systems of the type $-\Delta u_i=f(u_1,\ldots,u_m)$, this subject has become an active research area in recent years. In \cite{GMMY1997}, M. Garc\'ia-Huidobro et al. studied positive radial solutions to the elliptic system 
\begin{align}
\left\{\begin{array}{l}
-\Delta u=|v|^{p-1} v\\
-\Delta v=|u|^{q-1} u 
\end{array}\right.\label{ar}
\end{align}
in the punctured unit ball $\Omega^{*}=\left\{x \in \mathbf{R}^{N}: 0<|x|<1\right\}$ with $ N \geq 3$, where $p$ and $q$ are positive real numbers. They proved existence of solutions with Dirichlet boundary condition to \eqref{ar} using a Schauder fixed point argument, when the exponents $(p,q)$ lie on the open region enclosed by the critical hyperbola.



Recently, motivated by the celebrated work due to L. A. Caffarelli, B. Gidas and J. Spruck \cite{CGS} and its geometric version studied by F. C. ~Marques in  \cite{marques} and J. Xiong, L. Zhang in \cite{XiongZhang}, 
we have analysed qualitative properties of solutions with isolated singularities for system \eqref{S} in \cite{COS}. For recent developments in the study of qualitative properties of coupled elliptic systems we refer to the reader the works of \cite{Ghergu_Kim_Shahgholian_2020}, \cite{Li_Bao_2020}, \cite{CL}.

Mainly motivated by our first work  \cite{COS}, where we studied the asymptotic behavior of local solutions for coupled critical elliptic systems near an isolated singularity, it is natural to expect that the solutions of \eqref{S} are asymptotic to a Fowler-type solution. To be more specific, let us first recall the issue of deriving asymptotics for singular solutions to the following equation
\begin{equation}\label{fowler-1a}
\Delta u + \frac{n(n-2)}{4}u^{\frac{n+2}{n-2}} = 0.
\end{equation}
In \cite{CGS}, L. A. Caffarelli, B. Gidas and J. Spruck proved that any solution of  \eqref{fowler-1a} in $\R^{n}\backslash\{0\}$ with a nonremovable singularity is radial. These solutions are known in the literature as \emph{Fowler} or \emph{Delaunay-type} solutions, and they are local models to singular solutions of \eqref{fowler-1a} in the punctured unit ball $B^{n}_{1}(0)\backslash \{0\}$ (see also \cite{KMPS}). More precisely, if $u$ is a positive solution of  \eqref{fowler-1a} in the punctured unit ball then either $u$ is regular or the origin is a nonremovable singularity of $u$ and there is a Fowler solution $u_0$ such that
\begin{equation*}
u(x)=(1+o(1))u_0(x) \quad \text{as}\quad x \rightarrow 0.
\end{equation*}

In this direction, we have proved that any singular solution of the limit system
\begin{equation}\label{LSa}
\Delta u_{i} + \frac{n(n-2)}{4}| \mathcal{U} |^{\frac{4}{n-2} }u_{i}=0 \quad \mbox{in} \quad \R^{n}\setminus \{0\}, \;\;\; \mbox{for } i=1,\dots,d.
\end{equation}
is radially symmetric, see \cite[Theorem 1.2]{COS}. Moreover, we were able to obtain that the unique $C^2$ nonnegative singular solutions of the limit system \eqref{LSa} are \emph{Fowler-type solutions}, namely $\mathcal U=u_0\Lambda$ where $u_0$ is a Fowler solution and $\Lambda \in \mathbb{S}^{d-1}_{+} = \{x \in \mathbb{S}^{d-1} : x_{i} \geq 0\}$. Using this classification result we proved the Main Theorem of the work \cite{COS} which says that, under certain conditions on the potential $A$, if $3\leq n\leq 5$, then every solution of the system \eqref{S} with nonremovable isolated singularity is asymptotic to some Fowler-type solution.

Our main theorem generalizes the correspondent result of the third author \cite{almir} on the scalar case. In this latter case, since the Yamabe equation always has a solution in a closed Riemannian manifold (see \cite{Lee_Parker}), then it was natural to work with the metric with constant scalar curvature. This implies that the Yamabe equation has a constant function as a solution. Since in the system context, we do not have a trivial solution, we will assume that some $\Lambda\in\mathbb S_+
^{d-1}$ is a solution of the system \eqref{S}. The technique that we will apply in this work is known as {\it gluing method}. Usually, for this method to work it is necessary some kind of non-degeneracy assumption to assure the invertibility of some operator. In this paper, we will assume the following definition. 

	\begin{definition}\label{def001}
		A metric $g$ is nondegenerate at $\Lambda\in\mathbb S^{d-1}_+$ if the operator $\mathcal L_{g}: C^{2,\alpha}(M)^d\rightarrow C^{0,\alpha}(M)^d$, given by 
		\begin{equation}\label{eq028}
		\mathcal L_{g}^{i}(\mathcal{U}) = \Delta_{g}u_{i} - \sum_{j=1}^{d}A_{ij}u_j+n\langle \Lambda,\mathcal U\rangle \Lambda_{i} + \frac{n(n-2)}{4}u_{i},
		\end{equation}
		is surjective for some $\alpha\in(0,1)$. Here $\mathcal L_g=(\mathcal L_g^1,\ldots,\mathcal L_g^d)$ is the linearization of the system \eqref{S}, see Section \ref{sec001}, and $C^{k,\alpha}(M)$ is the H\"older space on $M$.
	\end{definition}

Now we are ready to state our main result of this work.
\begin{theorem}\label{teo004}
	Let $(M,g_0)$ be a closed Riemannian manifold with dimension $n\geq 3$ with constant scalar curvature $n(n-1)$. Suppose that
	\begin{enumerate}
	    \item the metric is nondegenerate at some $\Lambda\in\mathbb S^{d-1}_+$ such that $\Lambda$ is a trivial solution of the system \eqref{S}.
	    \item the potential $A$ satisfies the hypothese \eqref{H1}.
	    \item for dimension $n\geq 6$, the Weyl tensor vanishes up to order $\left[\frac{n-6}{2}\right]$.
	\end{enumerate}
		Then there exists a constant $\varepsilon_0>0$ and a one-parameter family of smooth functions $\mathcal V_\varepsilon=(v_{1\varepsilon},\ldots,v_{d\varepsilon})$ on $M\backslash\{p\}$ defined for all $\varepsilon\in(0,\varepsilon_0)$ such that each $\mathcal V_\varepsilon$ is a solution to the system \eqref{S} and is asymptotically to some Fowler-type solution  $\mathcal U_{\varepsilon,R,a}=u_{\varepsilon,R,a}\Lambda $ around the singular point $\{p\}$. Besides, on compact sets of $M\backslash\{p\}$ we have that $\mathcal V_\varepsilon$ converges to $\Lambda$ as $\varepsilon\rightarrow 0$.
\end{theorem}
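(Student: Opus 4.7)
The plan is to build $\mathcal{V}_\varepsilon$ via the gluing method announced in the introduction: construct an explicit approximate solution $\mathcal{W}_\varepsilon$ that interpolates between a Fowler-type singular solution near $p$ and the trivial solution $\Lambda$ away from $p$, then correct it to a true solution by a nonlinear perturbation. Concretely, fix normal $g$-coordinates centered at $p$ and a scale $r_\varepsilon$ (a suitable power of $\varepsilon$ tending to $0$). On the inner punctured ball $B_{r_\varepsilon}(p)\setminus\{p\}$ define $\mathcal{W}_\varepsilon := u_{\varepsilon,R,a}\Lambda$, pulled back through the exponential map; on the outer region $M\setminus B_{2r_\varepsilon}(p)$ define $\mathcal{W}_\varepsilon := \Lambda$; glue the two on the annulus via a smooth radial cutoff. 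The scale $r_\varepsilon$ is chosen so that $u_{\varepsilon,R,a}$ is close to $1$ on the annulus, which makes the gluing error small.

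Next, I would compute the residue $\mathcal{R}_\varepsilon$ obtained by substituting $\mathcal{W}_\varepsilon$ into the left-hand side of \eqref{S}. This residue has three sources: (i) in the inner ball, the deviation of $\Delta_g$ from the Euclidean Laplacian in normal coordinates, whose expansion is governed by the Riemann tensor and is controlled for $n\geq 6$ by the Weyl-vanishing hypothesis (3); (ii) also in the inner region, the coupling term $\sum_j A_{ij}u_{\varepsilon,R,a}\Lambda_j$, bounded via hypothesis \eqref{H1} pairing the decay of $A_n$ with the Fowler growth $|x|^{-(n-2)/2}$; (iii) in the annulus, commutators of the cutoff with $\Delta_g$, controlled by the Delaunay asymptotic of $u_{\varepsilon,R,a}-1$ at the chosen scale. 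Measuring $\mathcal{R}_\varepsilon$ in a weighted H\"older space $C^{0,\alpha}_{\delta}(M\setminus\{p\})^d$ whose weight $\delta$ matches the indicial behavior of the linearized Jacobi operator, one obtains $\|\mathcal{R}_\varepsilon\| \to 0$ as $\varepsilon \to 0$.

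Writing $\mathcal{V}_\varepsilon = \mathcal{W}_\varepsilon + \phi$ and expanding the system yields $\mathcal{R}_\varepsilon + L_\varepsilon \phi + Q_\varepsilon(\phi) = 0$, where $L_\varepsilon$ is the linearization at $\mathcal{W}_\varepsilon$ and $Q_\varepsilon$ collects the superlinear remainder. The problem reduces to the fixed-point equation $\phi = -L_\varepsilon^{-1}(\mathcal{R}_\varepsilon + Q_\varepsilon(\phi))$, which I would solve by the contraction mapping principle in a small ball of $C^{2,\alpha}_{\delta}$. A uniform-in-$\varepsilon$ right inverse of $L_\varepsilon$ is built by patching two local inverses: in the outer region, $L_\varepsilon$ is a small perturbation of $\mathcal{L}_{g_0}$ from Definition \ref{def001}, hence admits a bounded right inverse by the nondegeneracy hypothesis (1); in the inner region, $L_\varepsilon$ is close to the vector-valued Jacobi operator of the Fowler-type solution on $\R^n\setminus\{0\}$, whose Fredholm theory in Delaunay-weighted spaces is well-developed and whose kernel is spanned by a finite collection of explicit Jacobi fields.

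The main technical obstacle is handling the kernel of the inner inverse and patching the two inverses into a single bounded right inverse. The bounded Jacobi fields arise from the Delaunay period, translations, and, due to the system structure, directions along $\mathbb{S}^{d-1}_+$ tangent to $\Lambda$; they must be projected out to obtain a true inverse on the orthogonal complement. The three-parameter family $(\varepsilon, R, a)$, together with the rotational moduli within the tangent space to the sphere at $\Lambda$, provides exactly the freedom needed to absorb these projections: one tunes $R=R(\varepsilon)$ and $a=a(\varepsilon)$ by an implicit-function argument so that the projection of the residue and the quadratic remainder onto the kernel vanishes. Once this balancing is achieved, the contraction produces a unique small $\phi_\varepsilon$, and $\mathcal{V}_\varepsilon := \mathcal{W}_\varepsilon + \phi_\varepsilon$ is the desired solution: asymptotic to $u_{\varepsilon,R,a}\Lambda$ near $p$ by construction, and, since the inner building block is supported in a neighborhood shrinking to $\{p\}$, convergent to $\Lambda$ on compact sets of $M\setminus\{p\}$ as $\varepsilon\to 0$.
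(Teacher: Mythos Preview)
Your outline is a plausible gluing strategy, but it is \emph{not} the one the paper carries out. You propose a single global approximate solution obtained by cutting off $u_{\varepsilon,R,a}\Lambda$ against $\Lambda$ on an annulus, followed by one global correction $\phi$ via a uniformly bounded right inverse of $L_\varepsilon$ on all of $M\setminus\{p\}$. The paper instead runs the Mazzeo--Pacard \emph{Cauchy data matching} scheme: it first produces, by separate fixed-point arguments, an exact solution $\mathcal A_\varepsilon(R,a,\phi,h)$ of \eqref{S} on the punctured ball $B_{r_\varepsilon}(p)\setminus\{p\}$ (Theorem~\ref{teo002}) and an exact solution $\mathcal B_{r_\varepsilon}(\rho,\varphi)$ on $M_{r_\varepsilon}$ (Theorem~\ref{teo001}), each depending on free boundary data; then in Section~\ref{sec005} it adjusts all the parameters so that the two solutions and their normal derivatives agree on $\partial B_{r_\varepsilon}(p)$, after which elliptic regularity gives a smooth global solution. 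No cutoff is used and no global parametrix for $L_\varepsilon$ is constructed.

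The substantive point you should compare against is the handling of the extra low-frequency obstructions created by the system structure. In the paper the Fowler parameters $(R,a)$ control only the $\Lambda$-component of the constant and coordinate modes, so $(d-1)(n+1)$ further matching parameters are needed; these are supplied by the auxiliary function $h$ of \eqref{eq031}, carrying $\eta\in\mathbb R^{d-1}$ and $A_1,\ldots,A_{d-1}\in\mathbb R^n$, and this is singled out as the main new difficulty relative to the scalar case. Your proposal replaces $h$ by ``rotational moduli'' of $\Lambda$ in $\mathbb S^{d-1}_+$, which gives only $d-1$ scalar parameters. In a genuine global-cokernel approach this \emph{may} suffice, because in the directions orthogonal to $\Lambda$ the projected Jacobi ODE at frequencies $j=1,\ldots,n$ has no bounded solutions for small $\varepsilon$ and hence should not contribute to the approximate cokernel; but that claim is the crux and you have not argued it. Either verify this indicial/Floquet computation explicitly, or introduce an analogue of the paper's $h$ to cover the missing $(d-1)n$ directions; without one of these, step ``balancing the kernel'' in your sketch is not justified.
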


The assumption for higher dimensions is expected, since in the scalar case \cite{almir} the problem is completely solved without additional assumptions only when the dimension is between 3 and 5. For higher dimensions it was necessary to assume that the Weyl tensor vanishes to sufficiently high order at the singular point. This condition comes up naturally due to the {\it Weyl Vanishing Conjecture}, which is a geometric conjecture due to R. Schoen \cite{Schoen_1991} for dimension greater than or equal to 6. This conjecture was proved to be true for dimension $6\leq n\leq 24$ (see \cite{Khuri_2009,Li_Zhang_2005,Li_zhang_2007,Marques_2005}) and false for $n\geq 25$ (see \cite{Marques_2009}).  In the system case the method works without any extra assumption on the metric only for dimension $3\leq n\leq 5$. For higher dimensions, we have the following result.

To prove Theorem \ref{teo004} we will use a \emph{gluing method} which consists basically of three main steps. In the first one we deal with the problem locally, that is, in a small punctured neighborhood around the point $p$.  Thus, using a fixed point argument and the hypotheses \eqref{H1}, we perturb it to obtain a family of solutions with prescribed boundary. In order to do that we perform a conformal change of the metric to obtain the right estimates that we need, see Section \ref{sec002}. In the second step, since by hypotheses $\Lambda$ is a trivial solution of the system \eqref{S}, we use it as an approximate solution to find a family of solution in the complement of some neighborhood around the point $p$ with prescribed boundary. Finally, in the third step we use properties of the Fowler solution $u_0$ and the estimates obtained in the previous steps to show that we can find suitable parameters such that the solutions in the neighborhood and in its complement coincides up to order 1. Thus using elliptic regularity we conclude that the solution is actually smooth. 

This gluing technique is similar to that employed by many authors in the literature, for instance, see the works \cite{Ao,Byde,jleli,MP,almir} and the reference contained therein. Up to our best knowledge, this is the first time that this technique is applied to obtain existence of solution to a system on a manifold. We expect that this technique can be applied to obtain existence result to other classes of systems.



It is important to mention here that the linear analysis (Section \ref{sec007}) represents the heart of our work. It is by using the right inverse obtained in the linear analysis that we can perturb the approximate solution to an exact solution through a fixed point argument. The linear program is divided in two parts, the local analysis on the punctured geodesic ball and on the complement of the geodesic ball. The first part (Section \ref{sec008}) consist in to find a right inverse to the linearization of the operator in a punctured ball in $\mathbb R^n$ about a Fowler-type solution, while the second part (Section \ref{sec006}) consist in to find a right inverse to the linearization in the complement of a geodesic ball of the manifold. The analysis in the first part is more delicate and we follow the ideas by R. Mazzeo and F. Pacard \cite{MP} where a separation of variables technique were used. We extended their result to the system context. For this reason we deal separately with the space orthogonal to the constant and coordinates functions, where we get coercivity and the solution to the system are guaranteed, and with the space generated by these functions, where we get a solution by ODE theory. Finally we obtain a uniform {\it a priori} estimate for these solutions to ensure that the right inverse is bounded independently of the parameters. 


We briefly describe the outline of the paper. For most of the paper we will consider the case $3\leq n\leq 5$. In the Section \ref{sec004} we review the main facts about the Fowler-type solution, including the classification result about the limit system. Then we define the operator and the functions spaces that we are going to work with. In Section \ref{sec007} we analyzes the linearization of the operator. We find a right inverse to the linearized operator in the punctured ball in $\mathbb R^n$ and also a right inverse to the linearized operator about $\Lambda$ in the complement of a geodesic ball. Using the previous right inverse, in Section \ref{sec009} we use a fixed point argument to find a family of solution to the system \eqref{S} in a punctured geodesic ball and in its complement in the manifold. In Section \ref{sec005} we use the results obtained in the previous sections to prove the main result of this paper for dimension $3\leq n\leq 5$. Finally, in the briefly Section \ref{sec011} we finish our work explain how modify the previous arguments to prove the Theorem \ref{teo004} for high dimensions.

\section{Preliminaries}\label{sec004}

\subsection{Fowler-type solutions}

The notable work of Caffarelli, Gidas and Spruck \cite{CGS} stated that the \emph{Delaunay} or \emph{Fowler solutions} are the suitable models to arbitrary solutions of the singular Yamabe equation with isolated singularities. Remember that a Fowler or Delaunay solution is a positive function satisfying the equation
\begin{equation}\label{fowler}
\mathcal{N}(u) = \Delta u + \frac{n(n-2)}{4}u^{\frac{n+2}{n-2}} = 0 \quad \mbox{in}\quad \R^{n}\setminus \{0\}.
\end{equation}
with a nonremovable singularity at the origin.

In \cite{MP} it was proved that for each $\varepsilon\in(0,((n-2)/n))^{(n-2)/4}$ there exists a positive radial solution $u_\varepsilon$ of \eqref{fowler} given by $u_\varepsilon(x)=|x|^{\frac{2-n}{2}}v(-\log |x|)$, where the function $v$ is periodic and $\varepsilon=\min v(t)$. To see details about the Fowler solutions and its properties we refer the reader to \cite{KMPS,MPU,almir}.

If $R \in \R^{+}$ and $a \in \R^{n}$, there are some important variations of the Fowler solutions that still solves the equation \eqref{fowler}, possibly in a small punctured ball, given by 
\begin{equation}\label{fowf}
u_{\varepsilon, R, a}(x) := |x -a|x|^2|^{\frac{2-n}{2}}v_{\varepsilon}(-2\log x + \log |x - a|x|^2| + \log R).
\end{equation}
If $a=0$ we denote simply as $u_{\varepsilon,R}$ and if $R=1$ by $u_\varepsilon$. In the next result we will summarize some properties of \eqref{fowf} that we will be useful throughout this work.
\begin{proposition}\cite{MP,almir}\label{propo002}
	Given $u_{\varepsilon,R,a}$ as in \eqref{fowf} we have the following
	\begin{enumerate}
		\item 	For $x\in\mathbb{R}^n$ with $|x|\leq 1$, we have
		%
		%
		$$ u_{\varepsilon,R}(x) = \displaystyle\frac{\varepsilon}{2} \left(R^{\frac{2-n}{2}}+ R^{\frac{n-2}{2}}|x|^{2-n}\right)+ O''(R^{\frac{n+2}{2}}\varepsilon^{\frac{n+2}{n-2}}|x|^{-n}),$$
		$$|x|\partial_ru_{\varepsilon,R}(x)=\frac{2-n}{2}\varepsilon R^{\frac{n-2}{2}} |x|^{2-n}+ O'(R^{\frac{n+2}{2}}\varepsilon^{\frac{n+2}{n-2}}|x|^{-n})$$
		and
		$$|x|^2\partial_r^2u_{\varepsilon,R}(x)=\frac{(n-2)^2}{2}\varepsilon R^{\frac{n-2}{2}} |x|^{2-n}+ O(R^{\frac{n+2}{2}}\varepsilon^{\frac{n+2}{n-2}}|x|^{-n}).$$
		\item There exists a constant $r_0\in(0,1)$, such that for $x,a\in\mathbb{R}^n$ with $|x|\leq 1$, $|a||x|<r_0$, we have
		%
		%
		\begin{equation*}
		u_{\varepsilon,R,a}(x) =  u_{\varepsilon,R}(x)+((n-2)u_{\varepsilon,R}(x)+ |x| \partial_ru_{\varepsilon,R}(x))a\cdot x+O''(|a|^2|x|^{\frac{6-n}{2}}).
		\end{equation*}
		Besides if $R\leq |x|$, we have
		\begin{equation*}
		\begin{array}{rcl}
		u_{\varepsilon,R,a}(x) & = & u_{\varepsilon,R}(x)+((n-2)u_{\varepsilon,R}(x)+ |x| \partial_ru_{\varepsilon,R}(x))a\cdot x\\
		\\
		& + & O''(|a|^2\varepsilon R^{\frac{2-n}{2}}|x|^2).
		\end{array}
		\end{equation*}
	\end{enumerate}
\end{proposition}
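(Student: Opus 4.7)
The plan is to reduce both items to explicit computations based on the definition \eqref{fowf} of $u_{\varepsilon,R,a}$ together with the known small-$\varepsilon$ asymptotics of the Fowler profile $v_\varepsilon$. Since the claim is cited from \cite{MP,almir}, my proof would essentially reprise those arguments with a few quick derivations.

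For item (1), I would start from the identity $u_{\varepsilon,R}(x) = |x|^{(2-n)/2} v_\varepsilon(-\log|x| + \log R)$, which follows from \eqref{fowf} with $a = 0$. The key input is a small-$\varepsilon$ expansion of the periodic Fowler profile, obtained by linearizing the ODE $v'' - (n-2)^2 v/4 + n(n-2) v^{(n+2)/(n-2)}/4 = 0$ around $v \equiv 0$ and controlling the nonlinear remainder by a contraction on an appropriate exponentially weighted space (as in \cite{MP,KMPS}). Writing $t = \log(R/|x|)$, this gives
$$v_\varepsilon(t) = \tfrac{\varepsilon}{2}\bigl(e^{(n-2)t/2} + e^{-(n-2)t/2}\bigr) + O''\bigl(\varepsilon^{(n+2)/(n-2)} e^{(n+2)|t|/2}\bigr),$$
where the double prime means the estimate persists after one and two differentiations in $t$. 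Substituting back, re-expressing the exponentials in terms of $R$ and $|x|$, and applying the chain rule (together with $\partial_r t = -1/r$) yields the three displayed identities in item (1).

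For item (2), the idea is to Taylor expand the defining formula in the parameter $a$ about $a=0$. From $|x - a|x|^2|^2 = |x|^2(1 - 2\,a\cdot x + |a|^2|x|^2)$ I would deduce, for $|a||x|$ small,
$$|x - a|x|^2|^{(2-n)/2} = |x|^{(2-n)/2}\Bigl(1 + \tfrac{n-2}{2}\,a\cdot x + O(|a|^2 |x|^2)\Bigr),$$
while the argument of $v_\varepsilon$ becomes $-\log|x| + \log R - a\cdot x + O(|a|^2|x|^2)$. Expanding $v_\varepsilon$ to first order in its argument and multiplying the two factors, the zeroth-order term reproduces $u_{\varepsilon,R}$, and the linear term in $a$ collapses, via the chain rule identity $|x|\partial_r u_{\varepsilon,R} = |x|^{(2-n)/2}\bigl(\tfrac{2-n}{2} v_\varepsilon - v_\varepsilon'\bigr)$, to $\bigl((n-2) u_{\varepsilon,R} + |x|\partial_r u_{\varepsilon,R}\bigr)\,a\cdot x$. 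The quadratic remainder is controlled by uniform bounds on $v_\varepsilon$, $v_\varepsilon'$, $v_\varepsilon''$, yielding $O''(|a|^2|x|^{(6-n)/2})$ in the general case. In the regime $R \le |x|$, one sits in the decaying branch of the Fowler profile, so item (1) gives $v_\varepsilon \lesssim \varepsilon R^{(n-2)/2}|x|^{2-n}$ (with analogous bounds on its derivatives), which sharpens the quadratic remainder to $O''(|a|^2 \varepsilon R^{(2-n)/2}|x|^2)$.

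The main obstacle, as I see it, is not the algebra of the Taylor expansion but propagating the $O''$ differentiation control through the expansion of $v_\varepsilon$ uniformly in $\varepsilon$. This reduces to the fine ODE asymptotics for Fowler profiles developed in \cite{MP,KMPS,almir}, which I would invoke rather than reprove from scratch.
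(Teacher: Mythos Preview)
The paper does not supply its own proof of this proposition: it is stated with the citation \cite{MP,almir} and used as a black box. Your sketch is essentially the argument one finds in those references --- write $u_{\varepsilon,R}(x)=|x|^{(2-n)/2}v_\varepsilon(\log R-\log|x|)$, feed in the small-$\varepsilon$ expansion $v_\varepsilon(t)=\varepsilon\cosh\!\bigl(\tfrac{n-2}{2}t\bigr)+O''(\varepsilon^{(n+2)/(n-2)}e^{(n+2)|t|/2})$ for item~(1), and Taylor expand \eqref{fowf} in $a$ for item~(2) --- so your approach matches the intended one. One small slip: in the regime $R\le |x|$ you wrote $v_\varepsilon\lesssim \varepsilon R^{(n-2)/2}|x|^{2-n}$, but what you need (and what item~(1) actually gives) is $u_{\varepsilon,R}\lesssim \varepsilon R^{(2-n)/2}$, i.e.\ $v_\varepsilon\lesssim \varepsilon R^{(2-n)/2}|x|^{(n-2)/2}$; your final remainder $O''(|a|^2\varepsilon R^{(2-n)/2}|x|^2)$ is nonetheless correct.
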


Similarly to the results obtained by Caffarelli, Gidas and Spruck in \cite{CGS}, in \cite{COS} we studied the solutions of the following system
\begin{equation}\label{LS}
\Delta u_{i} + \frac{n(n-2)}{4}| \mathcal{U} |^{\frac{4}{n-2} }u_{i}=0 \quad \mbox{in} \quad \R^{n}\setminus \{0\}.
\end{equation}
Positive solutions of this problem with a nonremovable singularity at the origin, meaning that $ \displaystyle\lim_{x\rightarrow 0} |\mathcal{U}(x)| = +\infty$, are called \emph{Fowler-type solutions}.

We proved in \cite{COS} that positive solutions of \eqref{LS} are radially symmetric. Moreover, we obtained a fully characterization of the singular solutions of \eqref{LS} in terms of the Fowler solutions described initially. 

\begin{theorem}\cite[Theorem 1.2]{COS}\label{class1} Let $\mathcal{U} \in C^2(\R^n \backslash \{0\})$ be a nonnegative singular solution of \eqref{LS}.
	Then there exists  $\Lambda \in \mathbb{S}^{d-1}_{+} = \{x \in \mathbb{S}^{d-1} : x_{i} \geq 0\}$ and a Fowler solution $u_\varepsilon$ such that 
	$\mathcal{U} = u_{\varepsilon}\Lambda.$
\end{theorem}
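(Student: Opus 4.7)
The plan is to combine the radial symmetry statement from \cite{COS} with an ODE analysis in Emden--Fowler coordinates, closed by a geometric observation on the sphere $\mathbb{S}^{d-1}$. First I would note that each $u_i$ satisfies $\Delta u_i + \frac{n(n-2)}{4}|\mathcal{U}|^{4/(n-2)} u_i = 0$ with nonnegative zeroth-order coefficient, so by the strong maximum principle each component is either identically zero or strictly positive on $\mathbb{R}^n\setminus\{0\}$; dropping any vanishing coordinates I may assume $u_i > 0$ for all $i$. The radial symmetry part of \cite[Theorem~1.2]{COS} then yields $\mathcal{U}(x) = \mathcal{U}(|x|)$, and the Emden--Fowler change of variables $u_i(r) = r^{-(n-2)/2} v_i(t)$ with $t = -\log r$ transforms \eqref{LS} into the autonomous system
\begin{equation*}
v_i'' - \tfrac{(n-2)^{2}}{4} v_i + \tfrac{n(n-2)}{4}|\mathbf{v}|^{4/(n-2)} v_i = 0, \qquad t \in \mathbb{R},\; i = 1,\dots,d.
\end{equation*}

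Next I would perform an angular--radial decomposition $\mathbf{v} = w\,\Theta$ with $w = |\mathbf{v}|$ and $\Theta \in \mathbb{S}^{d-1}_+$. Projecting the system onto $\Theta$ and onto $\Theta^{\perp}$ separately gives, respectively,
\begin{equation*}
w'' = w|\Theta'|^{2} + \tfrac{(n-2)^{2}}{4} w - \tfrac{n(n-2)}{4} w^{(n+2)/(n-2)}, \qquad \Theta'' + 2\tfrac{w'}{w}\Theta' + |\Theta'|^{2}\Theta = 0.
\end{equation*}
Taking the inner product of the tangential equation with $\Theta'$ and using $\Theta\cdot\Theta' = 0$ gives $(\log|\Theta'|^{2})' = -4(\log w)'$, which integrates to the conservation law $|\Theta'|^{2}\,w^{4} \equiv C$ for some constant $C \ge 0$. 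Independently, the Hamiltonian $H = \tfrac{1}{2}|\mathbf{v}'|^{2} + g(w)$ with $g(w) = \tfrac{(n-2)^{2}}{8}\bigl(w^{2n/(n-2)} - w^{2}\bigr)$ is conserved, and using $|\mathbf{v}'|^{2} = (w')^{2} + w^{2}|\Theta'|^{2}$ it rewrites as $H = \tfrac{1}{2}(w')^{2} + \tfrac{C}{2w^{2}} + g(w)$.

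The real work is then to rule out the case $C > 0$. Introducing the reparametrization $\tau(t) = \int_{0}^{t} w(s)^{-2}\,ds$, a short computation identifies the tangential equation with the geodesic equation on $\mathbb{S}^{d-1}$: $\Theta_{\tau\tau} + C\,\Theta = 0$ and $|\Theta_{\tau}|^{2} \equiv C$. Suppose $C > 0$. The Hamiltonian identity forces $w$ to stay in a compact subinterval of $(0,\infty)$ (the penalty $C/(2w^{2})$ excludes $w\to 0$, while $g$ excludes $w\to\infty$), so $w^{-2}$ is bounded and $\tau(t)\to\pm\infty$ as $t\to\pm\infty$. Hence $\Theta(\tau) = \cos(\sqrt{C}\tau)\,u + \sin(\sqrt{C}\tau)\,v$ traces the full great circle $\mathbb{S}^{d-1}\cap \mathrm{span}\{u,v\}$, and in particular passes through both $u$ and $-u$. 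The requirement $\Theta(\tau)\in\mathbb{S}^{d-1}_{+}$ for every $\tau$ would then demand $u_{i} \ge 0$ and $-u_{i} \ge 0$ for each coordinate $i$, forcing $u = 0$ and contradicting $|u| = 1$. Therefore $C = 0$.

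Finally, with $C = 0$ one has $\Theta' \equiv 0$, so $\Theta \equiv \Lambda \in \mathbb{S}^{d-1}_{+}$ is constant. Substituting $\mathbf{v} = w\,\Lambda$ back into the system reduces it to the scalar Fowler--Emden equation $w'' - \tfrac{(n-2)^{2}}{4} w + \tfrac{n(n-2)}{4} w^{(n+2)/(n-2)} = 0$. Since $\mathcal{U}$ has a nonremovable singularity at the origin, the profile $w$ is neither zero nor the homoclinic bubble, so by the classical Caffarelli--Gidas--Spruck classification \cite{CGS,MP} it coincides with some Fowler profile $v_{\varepsilon}$. Reverting the change of variables yields $\mathcal{U}(x) = u_{\varepsilon}(|x|)\,\Lambda$, as claimed. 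I expect the third step (the geodesic reparametrization combined with the positivity constraint ruling out $C > 0$) to be the main obstacle, since it is where the vector-valued geometry truly enters, beyond what is available in the scalar setting.
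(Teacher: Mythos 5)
Your ODE analysis of the radial problem is essentially correct: the decomposition $\mathbf v=w\Theta$ in Emden--Fowler variables, the conserved quantity $|\Theta'|^2w^4\equiv C$ (which, by Lagrange's identity, is just $\sum_{i<j}(v_iv_j'-v_jv_i')^2$, i.e.\ the constancy of the pairwise Wronskians in another guise), the Hamiltonian identity, the exclusion of $C>0$ by letting the reparametrized $\Theta$ run through a full great circle and contradicting $\Theta\in\mathbb{S}^{d-1}_{+}$, and the final phase-plane identification of $w$ with a Fowler profile (zero and the homoclinic bubble being excluded because the singularity is nonremovable, and the orbits reaching $w=0$ being excluded because $|\mathcal U|>0$ by the superharmonicity argument you gave for each component) are all sound. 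This is a clean way to obtain the proportionality of the components once one knows the solution is radial.

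The genuine gap is the very first step. You invoke ``the radial symmetry part of \cite[Theorem 1.2]{COS}'', but the statement you are proving \emph{is} \cite[Theorem 1.2]{COS}: its conclusion $\mathcal U=u_\varepsilon\Lambda$ already contains radial symmetry, so quoting that part of the same theorem is circular, and there is no independent radial-symmetry result in this paper to lean on. In the authors' proof the symmetry is the heart of the matter: one must run the vector-valued analogue of the Caffarelli--Gidas--Spruck moving-planes/moving-spheres argument (Kelvin transform, comparison near the singularity and at infinity) for the coupled system \eqref{LS}, and this is precisely the part that does not follow from the scalar theory in \cite{CGS}. Without an independent proof or reference for radial symmetry, your argument only classifies the \emph{radial} nonnegative singular solutions, i.e.\ the easier half of the theorem. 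Contrary to your closing remark, the place where the vector-valued structure genuinely bites is not the exclusion of $C>0$ (which is elementary, and equivalent to showing the constant Wronskians vanish) but the symmetry reduction you assumed at the outset.
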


In light of the aforementioned strategy for the singular Yamabe problem and with this recent classification result it is natural then seek solutions to \eqref{S}, that are asymptotic to $\mathcal{U}_{\varepsilon,R,a} = u_{\varepsilon,R,a}\Lambda$, where $u_{\varepsilon,R,a}$ is given by \eqref{fowf}.

\subsection{The operator}\label{sec001}

Let $(M^{n},g)$ be a compact Riemannian manifold and let $p \in M$ be a fixed point. In order to find a positive singular solution to the system \eqref{S} in $M\backslash\{p\}$, define the quasilinear map $H_{g}(\mathcal{U}) = (H_{g}^{1}(\mathcal{U}),\ldots, H_{g}^{d}(\mathcal{U}))$, where $\mathcal U=(u_1,\ldots,u_d)$, with components given by
$$H_{g}^{i}(\mathcal{U}) = \Delta_{g}u_{i} - \sum_{j=1}^{d}A_{ij}u_j + \frac{n(n-2)}{4}|\mathcal{U}|^{\frac{4}{n-2}}u_{i}.$$

Using that the Conformal Laplacian $L_g=\Delta_g-\frac{n-2}{4(n-1)}R_g$ obeys the following relation
concerning conformal changes of the metric,
$$L_{v^{\frac{4}{n-2}}g}(u)=v^{-\frac{n+2}{n-2}}L_g(vu),$$
we obtain that
\begin{equation}\label{eq027}
    \begin{array}{rcl}
     H^i_{v^{\frac{4}{n-2}}g}(\mathcal U) & = &\displaystyle v^{-\frac{n+2}{n-2}}\left(L_g(vu_i)+ \frac{n(n-2)}{4}|v\mathcal U|^{\frac{4}{n-2}}vu_i\right.\\
     \\
     & & \displaystyle\left.-v^{\frac{4}{n-2}}\sum_j\left(A_{ij}-\frac{n-2}{4(n-1)}R_{v^{\frac{4}{n-2}}g}\delta_{ij}\right){vu_i}\right).
\end{array}
\end{equation}

Although a Fowler-type solution $\mathcal U_0=u_0\Lambda$ is not a solution to the equation $H_g=0$, we will use it as a approximate solution. Here $\Lambda\in \mathbb S_+^1$ is fixed. Thus,  we will seek a function $\mathcal{U}$ such that
$$
\left\{
\begin{array}{lcl}
H_{g}(\mathcal{U}_0 + \mathcal{U}) = 0 & \mbox{ in }  & M\backslash\{p\}\\
\\
(\mathcal{U}+\mathcal{U}_0)(x)\rightarrow \infty & \mbox{ as } & x\rightarrow p.
\end{array}
\right.$$

This is done by considering the linearization of the quasilinear map $H_{g}$ about $\mathcal{U}_0=u_0\Lambda$, which is defined by $\mathcal{L}_{g}(\mathcal{U}) = (\mathcal{L}_{g}^{1}(\mathcal{U}),\ldots, \mathcal{L}_{g}^{d}(\mathcal{U}))$ where
\begin{equation}\label{li}
\begin{array}{lcl}
\mathcal{L}_{g}^{i}(\mathcal{U}) &= &\displaystyle \frac{\partial}{\partial t}\bigg|_{t=0}H_{g}^{i}(\mathcal{U}_{0} + t\mathcal{U}) \\
&= & \displaystyle\Delta_{g}u_i - \sum_{j=1}^{d}A_{ij}u_j + nu_{0}^{\frac{4}{n-2}}\langle \Lambda,\mathcal{U}\rangle \Lambda_i + \frac{n(n-2)}{4}u_{0}^{\frac{4}{n-2}}u_i,
\end{array}
\end{equation}
where $\Lambda=(\Lambda_1,\ldots,\Lambda_d)\in\mathbb S^{d+1}_+$. Expanding $H_{g}$ about a Fowler-type solution $\mathcal{U}_0$, we obtain
\begin{equation}\label{eq057}
H_{g}(\mathcal{U}_{0} + \mathcal{U}) = H_{g}(\mathcal{U}_{0}) + \mathcal{L}_{g}(\mathcal{U}) + Q(\mathcal{U}),
\end{equation}
where the nonlinear remainder term
 $Q(\mathcal{U}) = (Q^{1}(\mathcal{U}),\ldots, Q^{d}(\mathcal{U}))$ is given by
 \begin{equation}\label{resto}
\begin{array}{rcl}
Q^{i}(\mathcal{U}) & = &\displaystyle\frac{n(n-2)}{4}|\mathcal{U}_{0} + \mathcal{U}|^{\frac{4}{n-2}}(u_0\Lambda_i+u_i) - \frac{n(n-2)}{4}u_0^{\frac{n+2}{n-2}}\Lambda_i\\
\\
& &\displaystyle- nu_0^{\frac{4}{n-2}}\langle \mathcal{U}, \Lambda \rangle \Lambda_i - \frac{n(n-2)}{4}u_{0}^{\frac{4}{n-2}}u_i\\
\\
& = & \displaystyle n\int_0^1 \left(|\mathcal U_0+t\mathcal U|^{\frac{8-2n}{n-2}}\langle \mathcal U_0+t\mathcal U,\mathcal U\rangle (u_0\Lambda_i+tu_i)-u_0^{\frac{4}{n-2}}\langle \mathcal U,\Lambda\rangle\Lambda_i \right.\\
\\
& & +\displaystyle \frac{n(n-2)}{4}\left.\left(|\mathcal U_0+t\mathcal U|^{\frac{4}{n-2}}u_i-u_0^{\frac{4}{n-2}}u_i\right) \right)dt.
\end{array}
\end{equation}

\subsection{Function spaces}
In this section we will quickly review some definitions of weighted H\"{o}lder spaces introduced by Mazzeo and Pacard in \cite{MP} and we will fix some important notations. For details we also refer the reader to \cite{almir}.

We emphasize that the advantage of using weighted H\"{o}lder spaces lies in the fact that by choosing well the weight, it is possible to show that the linearized operator has a right inverse when we consider some additional boundary restrictions. 

\begin{definition}\label{d1} For each $k \in \mathbb{N}$, $r > 0$, $0 < \alpha < 1$, $\sigma \in (0,r/2)$ and $u \in C^{k}(B_{r}(0)\backslash \{0\})$, set
	\begin{equation*}
	\|u\|_{(k,\alpha),[\sigma, 2\sigma]} := \sup_{|x| \in [\sigma, 2\sigma]}\left(\sum_{j=0}^{k}\sigma^{j}|\nabla^{j}u(x) |\right) + \sigma^{k+\alpha}\sup_{|x|, |y| \in [\sigma, 2\sigma]}\frac{|\nabla^{k}u(x) - \nabla^{k}u(y)|}{|x - y|^{\alpha}}.
	\end{equation*}
	Thus, for any $\mu \in \R$, the space $C^{k,\alpha}_{\mu}(B_r(0)\backslash\{0\})$ is the collection of functions $u$ that are locally in $C^{k,\alpha}(B_r(0)\backslash \{0\})$ and for which the following norm is finite,
	\begin{equation*}
	\|u\|_{(k,\alpha),\mu,r} := \sup_{0 < \sigma \leq \frac{r}{2}} \sigma^{-\mu}\|u\|_{(k,\alpha), [\sigma, 2\sigma]}.
	\end{equation*}
\end{definition}

\begin{definition} Let $k \in \mathbb{N}$, $\alpha\in(0,1)$ and $r > 0$. The space $C^{k,\alpha}(\mathbb{S}^{n-1}_{r})$ is the collection of functions $\phi \in C^{k}(\mathbb{S}^{n-1}_{r})$ for which the norm 
$$	\|\phi\|_{(k,\alpha),r} := \|\phi(r\cdot)\|_{C^{k,\alpha}(\mathbb{S}^{n-1})}.$$
 is finite. Here $\mathbb S_r^{n-1}:=\{x\in\mathbb R^n;|x|=r\}$.
\end{definition}

Next, consider a compact Riemannian manifold $(M,g)$ and $\Psi: B_{r_{1}}(0)\rightarrow M$ some coordinate system on $M$ centered at some point $p \in M$, where $B_{r_{1}}(0) \subset \R^{n}$ is the ball of radius $r_{1}>0$ fixed. For $0 < r < s \leq r_{1}$, define $M_{r}:= M\backslash \Psi(B_{r}(0))$
and $\Omega_{r,s} := \Psi(A_{r,s}),$
where $A_{r,s} := \{x\in \R^{n};  r \leq |x| \leq s \}$.

Define the spaces $C^{k,\alpha}_{\mu}(\Omega_{r,s})$ and $C^{k,\alpha}_{\mu}(M_{r})$ as the space of restriction of elements of $C^{k,\alpha}_{loc}(M\backslash\{p\})$ to $M_{r}$ and $\Omega_{r,s}$, such that the norms
\begin{equation*}
	\|f\|_{C^{k,\alpha}_{\mu}(\Omega_{r,s})} := \sup_{r\leq \sigma \leq \frac{s}{2}}\sigma^{-\mu}\|f\circ \Psi\|_{(k,\alpha),[\sigma,2\sigma]}
\end{equation*}
and 
\begin{equation*}
	\|h\|_{C^{k,\alpha}_{\mu}(M_{r})} := \|h\|_{C^{k,\alpha}(M_{\frac{1}{2}r_{1}})} + \|h\|_{C^{k,\alpha}_{\mu}(\Omega_{r,r_{1}})},
\end{equation*}
are finite, respectively. 

Since we are working with systems, it will be necessary to consider the product of these spaces. We will indicate the number of factors as a index separated by a semicolon. For example
\begin{equation*}
C^{2,\alpha}_{\mu;l}(B_{r}(0)\backslash \{0\}) = \underbrace{C^{2,\alpha}_{\mu}(B_{r}(0)\backslash \{0\}) \times \cdots \times C^{2,\alpha}_{\mu}(B_{r}(0)\backslash \{0\})}_{l-times} ,
\end{equation*}
whose norm of an element $\mathcal{U} = (u_{1},...,u_{l}) \in C^{2,\alpha}_{\mu;l}(B_{r}(0)\backslash \{0\})$ is given by
$$\|\mathcal{U}\|_{(k,\alpha),\mu,r;l} = \sum_{j=1}^{l}\|u_{j}\|_{(k,\alpha),\mu,r}.$$

\section{Linear Analysis}\label{sec007}

We consider the nonlinear operator \eqref{eq027} which we linearize to obtain \eqref{li}. Our main goal in this section is to find a right inverse of the linearization. First we prove that the linearization of the equation \eqref{LS} about a Fowler-type solution has right inverse in the punctured ball in $\mathbb R^n$. Then, we use the nondegeneracy assumption (Definition \ref{def001}) to find a right inverse of the linearization about $\Lambda$ in the complement of a ball in the manifold.

\subsection{Analysis in the punctured  ball in \texorpdfstring{$\mathbb R^n$}{Lg}}\label{sec008}

In this section we want to study the linearization of the system \eqref{LS} about a Fowler-type solution, which is given by $\mathcal L_{\varepsilon,R,a}(\mathcal U)=(\mathcal L_{\varepsilon,R,a}^1(\mathcal U),\ldots,\mathcal L_{\varepsilon,R,a}^d(\mathcal U))),$
where
$$\mathcal L_{\varepsilon,R,a}^i(\mathcal U)=\Delta u_i+nu_{\varepsilon,R,a}^{\frac{4}{n-2}}\langle \Lambda,\mathcal U\rangle\Lambda_i+\frac{n(n-2)}{4}u_{\varepsilon,R,a}^{\frac{4}{n-2}}u_i,$$
for all $i=1,\ldots,d$, see \eqref{li}. For simplicity we denote $\mathcal L_{\varepsilon,R,0}$ by $\mathcal L_{\varepsilon,R}$ and when $R=1$, simply by $\mathcal L_\varepsilon$.
Our goal in this section is to study the Dirichlet problem in the punctured unit ball
\begin{equation}\label{sol}
\left\{
\begin{array}{lcl}
\mathcal{L}_{\varepsilon,R}(\mathcal W) = f &\mbox{ in } & \quad B_{1}(0)\backslash\{0\} \\
\mathcal W = 0 & \mbox{ on } & \quad \partial B_{1}(0),
\end{array}
\right.
\end{equation}
where  $\mathcal W=(w_1,\ldots,w_d)$, $f=(f_1,\ldots,f_d)$, $\mathcal L_{\varepsilon,R}=(\mathcal L_{\varepsilon,R}^1,\ldots,\mathcal L_{\varepsilon,R}^d)$ and
\begin{equation}\label{eq002}
\mathcal{L}_{\varepsilon,R}^{i}(\mathcal W) =
 \displaystyle\Delta w_i+nu_{\varepsilon,R}^{\frac{4}{n-2}}\langle \Lambda,\mathcal W\rangle \Lambda_i+\frac{n(n-2)}{4}u_{\varepsilon,R}^{\frac{4}{n-2}}w_i.
\end{equation}


Replacing $f(x)$ by $R^{-2}f(x/R)$ and $\mathcal W(x)$ by $\mathcal W(x/R)$, \eqref{sol} is equivalent to 
\begin{equation}\label{sol2}
\left\{
\begin{array}{lcl}
\mathcal{L}_{\varepsilon}(\mathcal W) = f &\mbox{ in } & \quad B_{1/R}(0)\backslash\{0\} \\
\mathcal W = 0 & \mbox{ on } & \quad \partial B_{1/R}(0).
\end{array}
\right.
\end{equation}

Due to the geometry of our domain, it is natural to approach our problem by means of a classical
separation of variables. To do that we consider the spectral decomposition of the operator $\Delta_{\mathbb{S}^{n-1}}$. Let $\phi_k$ be the eigenfunction of the Laplace-Beltrami operator on $\mathbb S^{n-1}$, that is, $\phi_k$ satisfies the identitie $\Delta_{\mathbb S^{n-1}}\phi_k+\lambda_k\phi_k=0$, with $k\in\mathbb N$. We recall that the spectrum of $\Delta_{\mathbb S^{n-1}}$ is given by $\{k(n-2+k);k\in N\}$. In particular, the first nonzero eigenvalue is $n-1$, with multiplicity $n$, whereas for $j\geq n+1$ we have $\lambda_j\geq 2n$. Besides, we consider the following eigenfunction decomposition of $f=(f_1,\ldots,f_d)$ and $\mathcal W=(w_1,\ldots,w_d)$
\begin{align*}
w_{i}(x) = |x|^{\frac{2-n}{2}}\sum_{j=0}^{\infty}w_{ij}(-\log|x|)\phi_{j}(\theta), \\
f_{i}(x) = |x|^{-\frac{n+2}{2}}\sum_{j=0}^{\infty}f_{ij}(-\log|x|)\phi_j(\theta).
\end{align*}
Then $\mathcal{W}_{j} = (w_{1j},\ldots, w_{dj})$ solves 
\begin{equation*}
\left\{
\begin{array}{rcl}
\mathcal L_{\varepsilon,j}^{i}(\mathcal{W}_{j}) & := & \displaystyle \frac{d^{2}w_{ij}}{dt^{2}} - \frac{(n-2)^2}{4}w_{ij} - \lambda_{j}w_{ij} + nv_{\varepsilon}^{\frac{4}{n-2}}\langle\mathcal{W}_{j},\Lambda\rangle\Lambda_{i} \\
\\
& & \qquad \qquad \displaystyle+ \frac{n(n-2)}{4}v_{\varepsilon}^{\frac{4}{n-2}}w_{ij}=f_{ij} \\
w_{ij}(\log R) & = & 0,
\end{array}
\right.
\end{equation*}
where $\mathcal L_{\varepsilon,j}^{i}$ is the restriction to the space generated by $\phi_j$ of the operator 
\begin{equation}\label{eq015}
\mathcal L_{\varepsilon}^{i}(\mathcal{W}) = \frac{d^{2}w_{i}}{dt^{2}} + \Delta_{\mathbb{S}^{n-1}} w_{i} - \frac{(n-2)^2}{4}w_{i} + nv_{\varepsilon}^{\frac{4}{n-2}}\langle \mathcal{W},\Lambda\rangle\Lambda_{i} + \frac{n(n-2)}{4}v_{\varepsilon}^{\frac{4}{n-2}}w_{i},
\end{equation}
which is the problem \eqref{sol2} transformed from $\mathbb R^n\backslash\{0\}$ to $\mathbb R\times\mathbb S^{n-1}$.

In the spirit of \cite{MP}, it is convenient to treat separately the {\it high frequencies}, $\lambda_j\geq 2n$ for $j\geq n+1$, and the {\it low frequencies}, namely $j=0,\ldots, n$.

\subsubsection{High frequencies: $j\geq n+1$.}

We consider the projection of $\mathcal W$ and $f$ along the high frequencies
\begin{align*}
\overline w_{i}(t,\theta) =\sum_{j=n+1}^{\infty}w_{ij}(t)\phi_{j}(\theta), \\
\overline f_{i}(t,\theta) = \sum_{j=n+1}^{\infty}f_{ij}(t)\phi_j(\theta).
\end{align*}

We must solve
\begin{equation}\label{eq010}
\left\{
\begin{array}{lc}
\mathcal L_{\varepsilon}^{i}( \overline{\mathcal{W}} )= \overline{f}_{i} \quad \mbox{ in } \quad (\log R, +\infty)\times \mathbb{S}^{n-1}, \\
\overline{w}_{i}(\log R,\theta) = 0,
\end{array}
\right.
\end{equation}
where $\overline{\mathcal W}=(\overline w_1,\ldots,\overline w_d)$. To solve \eqref{eq010}, first we study the following family of Dirichlet boundary value problem indexed by the parameter $T\in(\log R,\infty)$,
\begin{equation}\label{eq011}
\left\{
\begin{array}{lc}
\mathcal  L_{\varepsilon}^{i}(\overline{\mathcal{W}}_{T} )= \overline{f}_{i} \quad \mbox{ in } \quad (\log R, T)\times \mathbb{S}^{n-1}, \\
\overline{w}_{iT}(\log R,\theta) =\overline{w}_{iT}(T,\theta) = 0,
\end{array}
\right.
\end{equation}
for $i=1,\ldots,d$. To find a solution of $\eqref{eq011}$, we notice that the problem has a variational structure. Indeed, it is easy to see that critical points of the Euler-Lagrange
functional
\begin{align*}
\mathcal{F}_{T}(\mathcal W) = \int_{\log R}^{T}\int_{\mathbb{S}^{n-1}}\left(|\dot{\mathcal{W}}|^{2} + \frac{(n-2)^2}{4}|\mathcal{W}|^{2} + |\nabla_{\mathbb{S}^{n-1}}\mathcal{W} |^{2} - nv_{\varepsilon}^{\frac{4}{n-2}}\langle\mathcal{W},\Lambda\rangle^{2} \right. \\ \left.- \frac{n(n-2)}{4}v_{\varepsilon}^{\frac{4}{n-2}}|\mathcal{W}|^{2} + \langle \overline{f}, \mathcal{W}\rangle\right)dtd\theta,
\end{align*}
are weak solutions of \eqref{eq011}. Since $\lambda_{j}\geq \lambda_{n+1} = 2n$ and $0<v_\epsilon<1$, we estimate
\begin{align*}
\mathcal{F}_{T}(\mathcal W) \geq \int_{\log R}^{T}\int_{\mathbb{S}^{n-1}}\left(|\dot{\mathcal{W}}|^{2} + \frac{n+2}{2}|\mathcal{W}|^{2} + \langle \overline{f}, \mathcal{W}\rangle\right)dtd\theta.		
\end{align*}
This implies that the functional $\mathcal F_T$ is coercive on
$$H_0^1(D_R^T)_{;d}^\perp:=\left\{u=(u_1,\ldots,u_d); u_i\in H_0^1(D_R^T), \;\int_{\mathbb S^{n-1}}u_i(\cdot,\theta)\phi_j(\theta)d\theta=0\right.$$
$$\left.\mbox{ for all }j=0,\ldots,n\mbox{ and } i=1,\ldots,d \right\},$$
where $D_R^T=(\log R,T)\times\mathbb S^{n-1}$. Hence it is bounded from below. Furthermore, we can check that the functional $\mathcal F_T$ is weakly lower semicontinuous on $H_0^1(D_R^T)^\perp_{;d}$. Thus, we infer the existence of a minimizer $\mathcal W_T$ of $\mathcal F_T$, which provides a (weak) solution of \eqref{eq011}. The standard elliptic theory yields the expected regularity issues for $\mathcal W_T$ in terms of the regularity of $\overline f=(\overline f_1,\ldots,\overline f_d)$.

Before we prove the main result of this section let us prove the following lemma.

\begin{lemma}\label{lema2} Consider a unit vector $\Lambda=(\Lambda_1,\ldots,\Lambda_d)$. For every $\eta > 0$, there exists $\varepsilon_{0} > 0$ such that, for all $j \geq n+1$ and for all $\varepsilon \in (0,\varepsilon_{0}]$, every solution $\mathcal W=(w_1,\ldots,w_d)$ of the system
	\begin{equation}\label{sl2}
	\frac{d^{2}w_{i}}{dt^{2}} - \left(\frac{(n-2)^2}{4}+\lambda_{j}- \frac{n(n-2)}{4}v_{\varepsilon}^{\frac{4}{n-2}}\right)w_{i} + n\langle \mathcal{W},\Lambda\rangle \Lambda_{i}  v_{\varepsilon}^{\frac{4}{n-2}}= 0,
	\end{equation}
	i=1,\ldots,d, either decays to zero faster than $e^{-(((n+2)^2/4) - 4\eta)^{1/2}t}$ at $+\infty$ (respectively, $e^{(((n+2)^2/4) - 4\eta)^{1/2}t}$ at $-\infty$) or blows up faster than $e^{(((n+2)^2/4) - 4\eta)^{1/2}t}$ at $+\infty$ (respectively, $e^{-(((n+2)^2/4) - 4\eta)^{1/2}t}$ at $-\infty$).
\end{lemma}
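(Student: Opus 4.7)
My plan is to project the vector ODE onto the directions $\Lambda$ and $\Lambda^\perp$ in $\mathbb R^d$, reducing it to two decoupled scalar Hill's equations; then apply Floquet theory and exploit the concentration structure of the Fowler solution $v_\varepsilon$ as $\varepsilon\to 0$ to control the Floquet exponents.

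First I would decompose $\mathcal W = w\Lambda+\mathcal W^\perp$ with $w=\langle\mathcal W,\Lambda\rangle$ and $\langle\mathcal W^\perp,\Lambda\rangle=0$. Pairing \eqref{sl2} with $\Lambda$ (and using $|\Lambda|^2=1$) yields the scalar equation
\[
w''-\left(\tfrac{(n-2)^2}{4}+\lambda_j-\tfrac{n(n+2)}{4}\,v_\varepsilon^{4/(n-2)}\right)w=0,
\]
while taking the component orthogonal to $\Lambda$ eliminates the coupling term and shows that each component of $\mathcal W^\perp$ solves
\[
f''-\left(\tfrac{(n-2)^2}{4}+\lambda_j-\tfrac{n(n-2)}{4}\,v_\varepsilon^{4/(n-2)}\right)f=0.
\]
Since $v_\varepsilon\geq 0$, the potential of the perpendicular equation dominates that of the parallel equation pointwise, so by a standard comparison for Hill's equations (monotonicity of Floquet exponents in the potential via the Riccati equation $g'=\mu-g^2$), its Floquet exponents are at least as large. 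It therefore suffices to treat the parallel equation, which is the more restrictive case.

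Next, set $\mu_\varepsilon(t):=\tfrac{(n-2)^2}{4}+\lambda_j-\tfrac{n(n+2)}{4}\,v_\varepsilon^{4/(n-2)}(t)$. Using $v_\varepsilon^{4/(n-2)}\leq 1$, a direct computation shows $\mu_\varepsilon(t)\geq \tfrac{n+2}{2}>0$ for all $j\geq n+1$, so $f''=\mu_\varepsilon f$ is a Hill's equation in the instability gap. Floquet theory then produces Bloch solutions $\phi_\pm(t)=e^{\pm\alpha_\varepsilon(j)t}p_\pm(t)$ with $p_\pm$ periodic of period $T_\varepsilon$ and $\alpha_\varepsilon(j)>0$. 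Every solution writes $c_+\phi_++c_-\phi_-$: at $+\infty$ it decays at rate $\alpha_\varepsilon(j)$ if $c_+=0$ and blows up at rate $\alpha_\varepsilon(j)$ otherwise, and symmetrically at $-\infty$. The lemma is thus reduced to showing that, given $\eta>0$, one can choose $\varepsilon_0$ so small that $\alpha_\varepsilon(j)\geq\sqrt{(n+2)^2/4-4\eta}$ for all $\varepsilon\in(0,\varepsilon_0]$ and $j\geq n+1$.

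The remaining, and principal, task is to prove $\alpha_\varepsilon(j)\to\sqrt{\tfrac{(n-2)^2}{4}+\lambda_j}$ as $\varepsilon\to 0$, uniformly in $j\geq n+1$. The comparison above reduces everything to $j=n+1$, where the limit is exactly $(n+2)/2$. Here I would invoke the necklace structure of Fowler solutions recalled in Proposition \ref{propo002} and in \cite{MP,MPU}: as $\varepsilon\to 0$, $T_\varepsilon\to\infty$ and each period of $v_\varepsilon$ consists of a single bump of uniformly bounded width $L_0$ together with a long plateau on which $v_\varepsilon\to 0$ uniformly. Split the monodromy matrix over one period into a plateau contribution and a bump contribution: the plateau piece is exponentially close to $\exp\!\bigl((T_\varepsilon-O(1))\sqrt{\mu_{n+1}}\bigr)$ in its dominant eigenvalue since the coefficient there is close to the constant $\mu_{n+1}=(n+2)^2/4$, whereas the bump piece is a matrix bounded uniformly in $\varepsilon$ (because $\mu_\varepsilon$ stays bounded and the bump has bounded length). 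Composing, taking logarithm and dividing by $T_\varepsilon$ gives $\alpha_\varepsilon(n+1)=\sqrt{\mu_{n+1}}+O(1/T_\varepsilon)\to (n+2)/2$, and choosing $\varepsilon_0$ small enough completes the proof.

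The main obstacle is precisely this last step: because the convergence $v_\varepsilon\to 0$ is not uniform (the bump persists in width), no naive perturbation of the constant-coefficient ODE $f''=\mu_j f$ is available, and the argument must carefully quantify how the bump, being of bounded width, contributes only an $O(1)$ factor to the monodromy over a period of length $T_\varepsilon\to\infty$, so that the induced correction to the Floquet exponent is $O(1/T_\varepsilon)$ and vanishes in the limit.
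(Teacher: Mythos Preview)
Your proposal is correct and follows essentially the same route as the paper: both decompose $\mathcal W$ along $\Lambda$ and $\Lambda^\perp$ to reduce the system to the two decoupled scalar Hill equations \eqref{eq1} and \eqref{eq2}. The only difference is one of detail: the paper then simply invokes Lemma~2 of \cite{MP} for each scalar equation, whereas you go further and sketch a Floquet/monodromy proof of that scalar result (together with a comparison argument that lets you treat only the parallel equation at $j=n+1$), which is precisely the content the paper delegates to the reference.
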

\begin{proof}
	Consider the equation
	\begin{equation}\label{eq1}
	\frac{d^{2}w}{dt^{2}} - \left(\frac{(n-2)^{2}}{4} + \lambda_{j}  - \frac{n(n+2)}{4}v_{\varepsilon}^{\frac{4}{n-2}}\right)w =0.
	\end{equation}
	
	We know that the space of all solutions has dimension two. Moreover any solution of \eqref{eq1} times the unit vector $\Lambda$ is a solution of $\eqref{sl2}$. Similarly, if we consider the equation
	\begin{equation}\label{eq2}
	\frac{d^{2}w}{dt^{2}} - \left(\frac{(n-2)^{2}}{4}+ \lambda_{j}  - \frac{n(n-2)}{4}v_{\varepsilon}^{\frac{4}{n-2}}\right)w =0 ,
	\end{equation}
	we have that the dimension of the space of all solutions has dimension two and any solution $w$ of \eqref{eq2} times a unit vector $\overline{\Lambda}$ orthogonal to $\Lambda$ is a solution of the system $\eqref{sl2}$.
	Since 
	$\Lambda$ and $\overline{\Lambda}$ are orthogonal vectors, the solutions constructed above  spam the space of all solutions to the system \eqref{sl2}. Now, we note that the Lemma 2 in \cite{MP} is still true if the ODE \eqref{eq1} is replaced by \eqref{eq2}. From this, the lemma follows.
\end{proof}

To the next result let us define
\begin{equation}\label{eq023}
\|f\|_{C_{\delta;d}^0(D_R^T)}:=\sup_{(t,\theta) \in D_R^T }e^{\delta t}|f(t,\theta)|,
\end{equation}
where $D_R^T=(\log R,T)\times\mathbb{S}^{n-1}$. Similarly we define $\|f\|_{C_{\delta;d}^2(D_R^T)}$.
Now, let us prove the main result of this section.

\begin{proposition}\label{prop001}
	Let $\overline{\mathcal W}_T\in C_{\delta;d}^2(D_R^T)$ be a solution of \eqref{eq011} with $\overline f\in C^0_{\delta;d}(D_R^T)$. Then, for every $\delta\in\left(-\frac{n+2}{2},\frac{n+2}{2}\right)$, there exist constants $\varepsilon_0=\varepsilon_0(\delta,n)>0$ and $C>0$, such that for every $\varepsilon\in(0,\varepsilon_0]$, $R>0$ and $T>\log R$ we have
	\begin{equation}\label{eq012}
\|\overline{\mathcal W}_T\|_{C_{\delta;d}^0(D_R^T)}\leq C\|\overline f\|_{C_{\delta;d}^0(D_R^T)}.
	\end{equation}
\end{proposition}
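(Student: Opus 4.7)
My plan is to prove Proposition \ref{prop001} by contradiction, extracting a normalized limit on a product domain via a blow-up argument and then excluding its existence through Lemma \ref{lema2} combined with an energy identity in the high-frequency sector. Suppose the bound fails: there exist sequences $\varepsilon_k\in(0,\varepsilon_0]$, $R_k>0$, $T_k>\log R_k$, and $(\overline{\mathcal W}_k,\overline f_k)$ solving \eqref{eq011} on $D_{R_k}^{T_k}$ with $\|\overline{\mathcal W}_k\|_{C^0_{\delta;d}}=1$ and $\|\overline f_k\|_{C^0_{\delta;d}}\to 0$. Pick $(t_k,\theta_k)\in D_{R_k}^{T_k}$ with $e^{\delta t_k}|\overline{\mathcal W}_k(t_k,\theta_k)|\ge 1/2$ and set $\tilde{\mathcal W}_k(t,\theta):=e^{\delta t_k}\overline{\mathcal W}_k(t+t_k,\theta)$. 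Then $\tilde{\mathcal W}_k$ solves the translated system with potential $V_k(t):=v_{\varepsilon_k}(t+t_k)$ and right-hand side tending to zero in the weighted norm, obeys $|\tilde{\mathcal W}_k(t,\theta)|\le e^{-\delta t}$ and $|\tilde{\mathcal W}_k(0,\theta_k)|\ge 1/2$, and retains the orthogonality to $\phi_0,\ldots,\phi_n$ on each sphere.

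\smallskip

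The periodicity and the uniform $C^2$ bounds on $v_{\varepsilon_k}$ (Proposition \ref{propo002}) yield, along a subsequence, $V_k\to V_\infty$ in $C^2_{\mathrm{loc}}(\mathbb R)$ with $0\le V_\infty\le 1$; the shifted domains converge to $I_\infty\times\mathbb S^{n-1}$ for some interval $I_\infty\subseteq\mathbb R$ (full line, half-line, or finite strip, according as $T_k-t_k$ and $t_k-\log R_k$ remain bounded). Schauder interior and boundary estimates then produce a $C^2_{\mathrm{loc}}$ limit $\tilde{\mathcal W}_\infty$ that solves the corresponding homogeneous limit system with potential $V_\infty$, has vanishing Dirichlet data on any finite endpoint of $I_\infty$, preserves the weight bound $|\tilde{\mathcal W}_\infty(t,\theta)|\le e^{-\delta t}$ and the high-frequency orthogonality, and is nontrivial because $|\tilde{\mathcal W}_\infty(0,\theta_\infty)|\ge 1/2$ with $\theta_\infty=\lim\theta_k$.

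\smallskip

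Decompose $\tilde{\mathcal W}_\infty(t,\theta)=\sum_{j\ge n+1}\mathcal W_j(t)\phi_j(\theta)$; each $\mathbb R^d$-valued mode $\mathcal W_j$ satisfies an ODE system of the form \eqref{sl2} with potential $V_\infty$. Choose $\eta>0$ so that $\alpha:=((n+2)^2/4-4\eta)^{1/2}>|\delta|$. At any infinite endpoint of $I_\infty$, Lemma \ref{lema2} forces $\mathcal W_j$ to decay like $e^{-\alpha|t|}$ rather than blow up, since blow-up would violate the pointwise weighted bound. Multiplying the ODE by $\mathcal W_j$, summing over components and integrating over $I_\infty$, the boundary terms vanish (exponential decay at infinity, Dirichlet data at finite endpoints), yielding
\[
\int_{I_\infty}\Bigl(|\dot{\mathcal W}_j|^2 + \bigl(\tfrac{(n-2)^2}{4}+\lambda_j\bigr)|\mathcal W_j|^2 - nV_\infty^{\frac{4}{n-2}}\langle\mathcal W_j,\Lambda\rangle^2 - \tfrac{n(n-2)}{4}V_\infty^{\frac{4}{n-2}}|\mathcal W_j|^2\Bigr)dt = 0.
\]
Using $V_\infty\le 1$, $\langle\mathcal W_j,\Lambda\rangle^2\le|\mathcal W_j|^2$ and $\lambda_j\ge 2n$, the integrand is bounded below by $|\dot{\mathcal W}_j|^2 + \tfrac{n+2}{2}|\mathcal W_j|^2\ge 0$, forcing $\mathcal W_j\equiv 0$ for every $j\ge n+1$ and hence $\tilde{\mathcal W}_\infty\equiv 0$, a contradiction.

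\smallskip

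The hard part is the extraction of the limit in the second step: uniformly in $\varepsilon_k\in(0,\varepsilon_0]$ and in the position of the sup-point $t_k$ inside the (possibly long) period of $v_{\varepsilon_k}$, one must simultaneously extract a limit potential $V_\infty$ with $V_\infty\le 1$ and a limit domain, and check that both the weighted bound and the high-frequency orthogonality survive the $C^2_{\mathrm{loc}}$ passage to the limit in every endpoint configuration. The key input is the uniform control of Fowler profiles supplied by Proposition \ref{propo002}; without the sharp bound $V_\infty\le 1$, the coercivity margin $\tfrac{n+2}{2}$ in the energy identity would collapse.
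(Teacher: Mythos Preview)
Your argument is correct and follows the same contradiction/blow-up/energy-identity strategy as the paper. The only notable difference is that the paper explicitly forces $\varepsilon_k\to 0$ in the contradiction sequence and then splits the analysis of the limit potential into the three cases $v_\infty\equiv 0$, $v_\infty=(\cosh(\cdot-t_0))^{(2-n)/2}$, and $0<v_\infty<1$ a genuine Fowler profile, verifying the exponential decay separately in each; you instead invoke Lemma~\ref{lema2} uniformly, which as stated applies only to $v_\varepsilon$ with $\varepsilon\le\varepsilon_0(\eta)$, so you should either arrange the $\varepsilon_0$ in your contradiction hypothesis to be that threshold (so that any subsequential limit $V_\infty$ is covered) or remark that the conclusion of the lemma persists trivially in the two degenerate limits because the potential vanishes at $\pm\infty$.
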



\begin{proof}

The argument is by contradiction. If \eqref{eq012} were not true, then there would exist a sequence  ($T_{k}$, $\varepsilon_{k}$, $R_k$, $\overline{\mathcal{W}}_{T_{k}}$, $\overline{f}_{k}$) such that 
\begin{enumerate}
	\item $\varepsilon_k\rightarrow 0$ as $k\rightarrow+\infty$.
\item $\|\overline f\|_{C_{\delta;d}^0(D_{R_k}^{T_k})} = 1.$
	\item $\displaystyle \lim_{k \rightarrow \infty} \|\overline{\mathcal W}_{T_k}\|_{C^0_{\delta;d}(D_{R_k}^{T_k})} = \infty.
	$
		\item $$\left\{
	\begin{array}{lc}
	\mathcal  L_{\varepsilon}^{i}(\overline{\mathcal{W}}_{T_k} )= \overline{f}_{i} \quad \mbox{ in } \quad (\log R_k, T_k)\times \mathbb{S}^{n-1}, \\
	\overline{w}_{iT_k}(\log R_k,\theta) = \overline{w}_{iT_k}(T_k,\theta) =0,
	\end{array}
	\right.$$
	
\end{enumerate}

Now choose $t_{k} \in (\log R_k, T_{k})$ such that 
\begin{equation*}
A_{k}:=\sup_{\theta \in \mathbb{S}^{n-1}} e^{\delta t_{k}}|\overline{\mathcal{W}}_{T_{k}}(t_{k},\theta)| = \|\overline{\mathcal W}_{T_k}\|_{C^0_{\delta;d}(D_{R_k}^{T_k})}>0
\end{equation*}
and define
$${\mathcal{W}_{k}}(t,\theta) = A_{k}^{-1}e^{\delta t_{k}}\overline{\mathcal{W}_{T_{k}}}(t+t_{k},\theta)\;\;\mbox{ and }\;\;
{f_{k}}(t,\theta) = A_{k}^{-1}e^{\delta t_{k}}\overline{f}(t+t_{k},\theta),$$
for $(t,\theta)\in (\log R_k-t_k,T_k-t_k)\times\mathbb S^{n-1}$. Then, by definition
\begin{equation}\label{eq003}
\sup_{\theta \in \mathbb{S}^{n-1}}\sup_{\log R_k -t_{k}\leq t\leq T_{k}-t_{k}} e^{\delta t}|{\mathcal{W}_{k}}(t,\theta)|=\sup_{\theta \in \mathbb{S}^{n-1}} |{\mathcal{W}_{k}}(0,\theta)| = 1,
\end{equation}
and 
\begin{equation}\label{eq005}
\sup_{(t,\theta) \in (\log R_k-t_k,T_k-t_k)\times\mathbb{S}^{n-1}} e^{\delta t}|{f_{k}}(t,\theta)|\rightarrow 0\;\;\mbox{ as }\;\;k\rightarrow+\infty.
\end{equation}
In addition,
\begin{equation}\label{eq004}
\begin{array}{l}
\displaystyle\frac{d^{2}{w}_{ik}}{dt^{2}} - \frac{(n-2)^2}{4}{w}_{ik} + \Delta_{\mathbb{S}^{n-1}}{w}_{ik}+ \\
\qquad \displaystyle+ v_{k}^{\frac{4}{n-2}}(t+t_{k})\left(n\langle {\mathcal{W}}_{k},\Lambda\rangle \Lambda_{i} + \frac{n(n-2)}{4}{w}_{ik}\right) = {f}_{ik} 
\end{array}
\end{equation}
on $[\log R_k - t_{k}, T_{k} - t_{k}]\times\mathbb S^{n-1}$, where $\mathcal W_k=(w_{1k},\ldots,w_{dk})$ and $f_k=(f_{1,k},\ldots,f_{dk})$. Up to consider a subsequence, we can assume without loss of generality that, as $k\rightarrow+\infty$, we have
$$\log R_k - t_{k}\rightarrow \tau_{1} \in \R^{-}\cup\{-\infty\}\;\;\;\mbox{ and }\;\;\;T_{k} - t_{k}\rightarrow\tau_{2} \in \R^{+}\cup \{+\infty\}.$$

Suppose that $\log R_k-t_k\rightarrow 0$ as $k\rightarrow+\infty$. Since $ \mathcal W_k(\log R_k-t_k,\theta)=0$ for every $\theta\in\mathbb S^{n-1}$ and \eqref{eq003} holds, then the quantities $|\nabla  \mathcal W_k|$ would forced to explode in a region of the type $[\log R_k-t_k,\log R_k-t_k+1]\times\mathbb S^{n-1}$ as $k\rightarrow+\infty$. On the other hand, using \eqref{eq004} and assumptions on $\mathcal W_k$ and $f_k$ we deduce that there exists a positive constant $C>0$ such that 
$$| {\mathcal W}_k|+|\Delta_{(t,\theta)}  {\mathcal W}_k|\leq Ce^{\delta t_k},$$
for every $k\in\mathbb N$, in the region $[\log R_k-t_k,\log R_k-t_k+2]\times\mathbb S^{n-1}$. Hence, by the classical Schauder estimates, the
gradients are also bounded by the same quantities in $[\log R_k-t_k,\log R_k-t_k+1]\times\mathbb S^{n-1}$, which is a contradiction. Therefore, $\tau_1\not=0$. Similarly we prove that $\tau_2\not=0$.

Now, it is well known that the $C^2$-norms of the functions $v_k(t+t_k)$ are uniformly bounded, thus, using the Arzelà-Ascoli Theorem, we deduce that there exists a function $v_\infty$ such that $v_k(t+t_k)$ converges to $v_\infty(t)$ in $C^1_{loc}(\mathbb R)$, which is also a Fowler solution. Now, using \eqref{eq003}, \eqref{eq005} and the classical interior Schauder estimates and applying the Arzelà-Ascoli Theorem, we deduce that the functions $w_{ik}$, which satisfies \eqref{eq004}, converge to some $w_{i\infty}\in C^1_{loc}((\tau_1,\tau_2)\times\mathbb S^{n-1})$, which verifies the equation
\begin{equation}\label{eq006}
\frac{d^{2}{w}_{i\infty}}{dt^{2}} - \frac{(n-2)^2}{4}{w}_{i\infty} + \Delta_{\mathbb{S}^{n-1}}{w_{i\infty}} + v_{\infty}^{\frac{4}{n-2}}\left(n\langle {\mathcal{W_\infty}},\Lambda\rangle \Lambda_{i} + \frac{n(n-2)}{4}{w}_{i\infty}\right) = 0
\end{equation}
in the sense of distributions. Here $\mathcal W_\infty=(w_{1\infty},\ldots,w_{d\infty})$. If $\tau_i$ is a real number, then the boundary condition becomes $ {\mathcal W}_\infty(\tau_i,\theta)=0$, otherwise we use the decay prescription 
\begin{equation}\label{eq013}
| {\mathcal W}_\infty(t,\theta)|\leq e^{-\delta t}.
\end{equation}
It is important to point out that the condition
\eqref{eq003} implies that
\begin{equation}\label{eq014}
\sup_{\theta \in \mathbb{S}^{n-1}}| {\mathcal W}_\infty(0,\theta)|=1.
\end{equation}
Thus ${\mathcal W}_\infty$ is non trivial. Decompose each ${{w}}_{i\infty}$ as 
\begin{equation*}
{w_{i\infty}}(t,\theta) = \sum_{j=n+1}^\infty{ {w}}_{ij\infty}(t)\phi_{j}(\theta).
\end{equation*}
Thus, each $w_{ij\infty}$ satisfies the equation
\begin{equation}\label{eq008}
\frac{d^{2}{w}_{ij\infty}}{dt^{2}} - \frac{(n-2)^2}{4}{w}_{ij\infty} -\lambda_j{w_{ij\infty}} + v_{\infty}^{\frac{4}{n-2}}\left(n\langle {\mathcal{W}_{j\infty}},\Lambda\rangle \Lambda_{i} + \frac{n(n-2)}{4}{w}_{ij\infty}\right) = 0,
\end{equation}
where $\mathcal W_{j\infty}=(w_{1j\infty},\ldots,w_{dj\infty})$. Let us proof that this is a contradiction. 

\noindent{\bf Claim 1.} If $\tau_1=-\infty$ ($\tau_2=+\infty$), then $\mathcal W_\infty$ decays exponentially as $t\rightarrow-\infty$ (respectively, $t\rightarrow+\infty$).

We have to consider some cases. First consider the case $0<v_\infty<1$. 

Since $\delta < \frac{n+2}{2}$ we can choose $\eta>0$ such that $\delta^2 < \left(\frac{n+2}{2}\right)^2 - 4\eta$ to get
$$| {\mathcal W}_{j\infty}(t,\theta)|\leq e^{-\delta t}< e^{-\left(\left(\frac{n+2}{2}\right)^2-4\eta\right)^{1/2}t},\;\;\mbox{ for }t<0.$$ 
Thus, by Lemma \ref{lema2}, we find that for $\varepsilon>0$ small enough, ${\mathcal{W}_{\infty}}$ decays exponentially as t goes to $-\infty$. Similarly, if $\tau_{2} = + \infty$,  we get that ${\mathcal{W}_{\infty}}$ decays exponentially as $t \rightarrow \infty$. 

Now, if $v_\infty\equiv 0$, then by \eqref{eq008} we get  that
\begin{equation}\label{eq007}
\frac{d^{2}{w}_{ij\infty}}{dt^{2}} - \frac{(n-2)^2}{4}{w}_{ij\infty} - \lambda_j{w_{ij\infty}}= 0.
\end{equation}
Thus, $w_{ij\infty}=c_je^{\pm\gamma_j t}$, with $\gamma_j^2=\left(\frac{n-2}{2}\right)^{2}+\lambda_j\geq \left(\frac{n+2}{2}\right)^{2}$. By \eqref{eq013} with $\delta<\frac{n+2}{2}$ we conclude that $w_{ij\infty}=c_je^{\gamma_j t}$ for $t<0$. Using that $\delta\geq-\frac{n+2}{2}$ we find that $w_{ij\infty}=c_je^{-\gamma_jt}$ as $t\rightarrow+\infty$. Therefore, $\mathcal W_{j\infty}$ decays exponentially as $t\rightarrow-\infty$ and as $t\rightarrow+\infty$.

Finally, suppose that $v_\infty(t)=(\cosh(t-t_0))^{\frac{2-n}{2}}$. 
Since $\displaystyle\lim_{t\rightarrow \pm\infty}v_\infty^{\frac{4}{n-2}}(t)=0$, we deduce that 
$$w_{ij\infty}\sim e^{\pm\gamma_jt}\;\;\mbox{ as }t\rightarrow\pm\infty.$$
Again, the decay imposed to $\mathcal W_\infty$ implies that $w_{ij\infty}$ decays exponentially as $t\rightarrow\pm\infty$.

Now, we can multiply \eqref{eq008} by $w_{ij\infty}$ and apply integration by parts. In fact, if both $\tau_i$, $i=1,2$, are finite, we just have to use the fact that $\mathcal W_\infty(\tau_i,\theta)=0$. Otherwise, we use the Claim 1 to assure that there are no boundary terms.

Using that $\lambda\geq 2n$ and $0<v_\infty<1$ we obtain  
$$
\begin{array}{ccl}
0& = & \displaystyle\int_{\tau_{1}}^{\tau_{2}} \left( |\dot{\mathcal W}_{j\infty}|^2 + \frac{(n-2)^{2}}{4}|{\mathcal W_{j\infty}}|^{2} + \lambda_{j}|{\mathcal W_{j\infty}}|^{2}- v_{\infty}^{\frac{4}{n-2}}\left(n\langle {\mathcal{W}}_{\infty},\Lambda\rangle^{2} \right.\right.\\
& & \left.\left.\displaystyle + \frac{n(n-2)}{4}|{\mathcal W_{j\infty}}|^{2}\right) \right)dt\\
& \leq & \displaystyle\int_{\tau_1}^{\tau_2}\left(|\dot {\mathcal W}_{j\infty}|^2+\frac{n+2}{2}|\mathcal W_{j\infty}|^2\right)dt,
\end{array}
$$
Therefore ${\mathcal{W}}_\infty \equiv 0$, which is a contradiction with \eqref{eq014}
\end{proof}

Estimates for the full Hölder norm follows
by standard scaling arguments. Thus, given $\overline f=(\overline f_1, \ldots,\overline f_d)\in C_{\delta;d}^{0,\alpha}(D_R)$, the solution $\mathcal W_T$ of  \eqref{eq011} which verifies \eqref{eq012}, belongs to $C_{\delta,d}^{2,\alpha}(D_R^T)$. Moreover, for $\delta\in\left(-\frac{n+2}{2},\frac{n+2}{2}\right)$ there exists a constant $\varepsilon_0>0$ such that for all $\varepsilon\in(0,\varepsilon_0]$ we have
	\begin{equation*}
\|\overline{\mathcal W}_T\|_{C_{\delta;d}^{2,\alpha}(D_R^T)}\leq C\|\overline f\|_{C_{\delta;d}^{0,\alpha}(D_R^T)},
\end{equation*}
where $C>0$ is a constant which does not depends on $\varepsilon$, $R$ and $T$. Thus, letting $T\rightarrow+\infty$ we obtain the existence of a solution $\overline{\mathcal W}$ to \eqref{eq010} which verifies the
estimate
	\begin{equation*}
\|\overline{\mathcal W}\|_{C_{\delta;d}^{2,\alpha}(D_R)}\leq C\|\overline f\|_{C_{\delta;d}^{0,\alpha}(D_R)},
\end{equation*}

To summarize the analysis for the high frequencies we introduce the notations
$$\begin{array}{rcl}
C_{\mu;d}^{k,\alpha}(B_r(0)\backslash\{0\})^\perp & := & \displaystyle\left\{u\in C_{\mu;d}^{k,\alpha}(B_r(0)\backslash\{0\});\int_{\mathbb S^{n-1}}u_i(s\theta)\phi_j(\theta)d\theta=0,\right.\\
& & \displaystyle \left.\mbox{ for all }j\in\{0,\ldots,n\}, s\in(0, r] \mbox{ and }i\in\{1,\ldots,d\} \right\}.
\end{array}$$
and
$$\left[C_{\mu;d}^{k,\alpha}(B_r(0)\backslash\{0\})^\perp\right]_0:=\left\{u\in C_{\mu;d}^{k,\alpha}(B_r(0)\backslash\{0\})^\perp;u(x)=0\mbox{ for all } x \mbox{ with }|x|=r\right\}.$$

The previous analysis tell us that, for all $\mu\in(-n,2)$, the operator 
$$\mathcal L_{\varepsilon,R}:\left[C_{\mu;d}^{2,\alpha}(B_1(0)\backslash\{0\})^\perp\right]_0\rightarrow C_{\mu-2;d}^{0,\alpha}(B_1(0)\backslash\{0\})^\perp,$$
is an isomorphism with the inverse bounded independently of $\varepsilon$ and $R$.

\subsubsection{Low frequencies: $j=0,\ldots,n$}

We start by considering the projection of our original problem \eqref{sol2} along the eigenfunction $\phi_0$, obtaining
\begin{equation*}
\left\{
\begin{array}{rcl}
\mathcal L_{\varepsilon,0}^{i}(\mathcal{W}_0) & = &  f_{i0} \;\;\mbox{ in } \;\;(\log R,+\infty)\\
w_{i0}(\log R) & = & 0,
\end{array}
\right.
\end{equation*}
where 
\begin{equation}\label{eq016}
\mathcal L_{\varepsilon,0}^{i}(\mathcal W_0)=\frac{d^{2}{w}_{i0}}{dt^{2}} - \frac{(n-2)^2}{4}{w}_{i0} + v_{\varepsilon}^{\frac{4}{n-2}}\left(n\langle {\mathcal{W}_0},\Lambda\rangle \Lambda_{i} + \frac{n(n-2)}{4}{w}_{i0}\right).
\end{equation}

In this case, the potential has the wrong sign. Then, we need to use a different approach in order to provide existence. Following \cite{MP}, we suppose that $f_{i0}$ is at least continuous and we extend it to the whole $\R$. For any $T>\log R$, we consider the auxiliary backward Cauchy problem
\begin{equation}\label{eq017}
\left\{
\begin{array}{rcl}
\mathcal L_{\varepsilon,0}^{i}(\mathcal{W}_{0}) & = &  f_{i0} \;\;\mbox{ in } \;\;(-\infty,T)\\
w_{i0}(\log R) & = & 0.\\
w_{i0}(T) & = & 0,\\
\end{array}
\right.
\end{equation}

It is easy to see that there exists a unique solution $\mathcal W_T^0$ to \eqref{eq017}. Next, we define the spaces $C^k_{\delta;d}(-\infty,T)$ similarly as in \eqref{eq023}.

\begin{proposition}\label{propo001}
 Let $\mathcal W_{T0}\in C^2_{\delta;d}(-\infty,T)$ be a solution of \eqref{eq017} with $f_{i0}\in C_{\delta}^0(-\infty,T)$. Then, for every $\delta>\frac{n-2}{2}$, there exist constants $\varepsilon_0>0$ and $C>0$, such that for every $(0,\varepsilon_0]$, $R>0$, $T>\log R$ we have
 \begin{equation}\label{eq022}
  \|\mathcal W_{T0}\|_{C_{\delta;d}^0(\log R,T)}\leq C \|f_0\|_{C_{\delta;d}^0(\log R,T)}.
 \end{equation}
\end{proposition}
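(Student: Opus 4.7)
The plan is to adapt the blow-up-by-contradiction scheme of Proposition \ref{prop001}, the new ingredient being a careful ODE analysis of the limit problem by means of the classical Jacobi fields of the scalar Fowler linearization; the weight threshold $\delta>(n-2)/2$ is exactly what will exclude those fields.

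Assume the estimate fails. Then there exist sequences $\varepsilon_k\to 0$, $R_k>0$, $T_k>\log R_k$, $f_{0,k}$ and $\mathcal W_{0,k}$ solving \eqref{eq017} with $\|f_{0,k}\|_{C^0_{\delta;d}(\log R_k,T_k)}=1$ and $A_k:=\|\mathcal W_{0,k}\|_{C^0_{\delta;d}(\log R_k,T_k)}\to\infty$. Picking $t_k\in(\log R_k,T_k)$ attaining the weighted supremum of $\mathcal W_{0,k}$, I would rescale
\[
\mathcal V_k(t):=A_k^{-1}e^{\delta t_k}\mathcal W_{0,k}(t+t_k),\qquad g_k(t):=A_k^{-1}e^{\delta t_k}f_{0,k}(t+t_k),
\]
so that $|\mathcal V_k(0)|=1$, $|\mathcal V_k(t)|\le e^{-\delta t}$, $\|g_k\|_{C^0_{\delta;d}}\to 0$, and $\mathcal V_k$ satisfies the shifted system. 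Set $\tau_1:=\lim(\log R_k-t_k)\in\mathbb R^-\cup\{-\infty\}$ and $\tau_2:=\lim(T_k-t_k)\in\mathbb R^+\cup\{+\infty\}$. The same Schauder/gradient-explosion argument used in Proposition \ref{prop001} rules out $\tau_i=0$, while Arzel\`a--Ascoli and ODE regularity extract a nonzero limit $\mathcal W_\infty\in C^2((\tau_1,\tau_2);\mathbb R^d)$ with $|\mathcal W_\infty(0)|=1$, $|\mathcal W_\infty(t)|\le e^{-\delta t}$, Dirichlet data at any finite endpoint, and satisfying
\[
\ddot w_{i\infty}-\tfrac{(n-2)^2}{4}w_{i\infty}+v_\infty^{\frac{4}{n-2}}\Bigl(n\langle\mathcal W_\infty,\Lambda\rangle\Lambda_i+\tfrac{n(n-2)}{4}w_{i\infty}\Bigr)=0,
\]
where $v_\infty\in\{0,\,v_\varepsilon,\,(\cosh(\cdot-t_0))^{(2-n)/2}\}$ is the limit of $v_{\varepsilon_k}(\cdot+t_k)$.

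The heart of the argument is to exclude such a limit. Writing $\phi:=\langle\mathcal W_\infty,\Lambda\rangle$ and $\mathcal W_\infty^\perp:=\mathcal W_\infty-\phi\Lambda$, projecting the limit system along $\Lambda$ and onto $\Lambda^\perp$ decouples it into two scalar linear ODEs:
\[
\ddot\phi-\tfrac{(n-2)^2}{4}\phi+\tfrac{n(n+2)}{4}v_\infty^{\frac{4}{n-2}}\phi=0,\qquad \ddot w^\perp-\tfrac{(n-2)^2}{4}w^\perp+\tfrac{n(n-2)}{4}v_\infty^{\frac{4}{n-2}}w^\perp=0.
\]
The $\Lambda^\perp$ equation admits $v_\infty$ itself as a positive solution, so reduction of order yields a second fundamental solution whose growth relative to $v_\infty$ is only linear; in particular no nontrivial combination decays faster than $e^{-(n-2)t/2}$ at $+\infty$. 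Since $\delta>(n-2)/2$, the weighted decay bound (combined with the Dirichlet condition when $\tau_2<+\infty$) forces $\mathcal W_\infty^\perp\equiv 0$. The $\Lambda$-equation is the standard linearization of the scalar Fowler equation, whose two-dimensional solution space is spanned by the Jacobi fields $\dot v_\infty$ and $\partial_\varepsilon v_\infty$ (or $e^{\pm(n-2)t/2}$ when $v_\infty\equiv 0$); once again the best possible decay at $+\infty$ is $e^{-(n-2)t/2}$, so the threshold $\delta>(n-2)/2$ forces $\phi\equiv 0$, contradicting $|\mathcal W_\infty(0)|=1$.

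The main obstacle is this final ODE classification: one has to treat in parallel the three possibilities for $v_\infty$, verify in each case that the known Jacobi fields really span the whole solution space of the scalar equation, and check that $(n-2)/2$ is the sharp threshold excluding them, including when $\tau_1$ or $\tau_2$ is finite and a Dirichlet condition must be combined with the weighted decay. Once \eqref{eq022} is in hand, its $C^{2,\alpha}$ version and the passage $T\to+\infty$ follow from interior Schauder theory by the standard rescaling argument used at the end of Section \ref{sec008}.
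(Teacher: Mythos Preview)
Your blow-up setup and the decoupling of the limit system into its $\Lambda$- and $\Lambda^\perp$-components are correct, and in fact give a cleaner picture than the paper's rather terse treatment of its Case~2. There is, however, a genuine gap in how you dispose of the possibility $\tau_2<+\infty$.

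The auxiliary problem \eqref{eq017} is a \emph{backward Cauchy problem}: the intended data at $t=T$ are $\mathcal W(T)=0$ \emph{and} $\dot{\mathcal W}(T)=0$ (compare \eqref{eq021}, the phrase ``backward Cauchy problem'' preceding \eqref{eq017}, and item~(4) in the paper's own proof; the displayed boundary conditions in \eqref{eq017} contain a misprint). Hence, when $\tau_2=\lim(T_k-t_k)<+\infty$, the limit inherits $\mathcal W_\infty(\tau_2)=\dot{\mathcal W}_\infty(\tau_2)=0$, and ODE uniqueness gives $\mathcal W_\infty\equiv0$ at once. This is precisely how the paper handles that case.

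You retain only a Dirichlet condition at $\tau_2$. That is a single linear constraint on a two-dimensional solution space, and the weighted bound $|\mathcal W_\infty(t)|\le e^{-\delta t}$ carries no asymptotic content on an interval bounded above (nor as $t\to-\infty$, where $e^{-\delta t}\to+\infty$). Your Jacobi-field classification therefore cannot exclude a nontrivial limit in this situation. Once the Cauchy data at $\tau_2$ are restored, the finite-$\tau_2$ case becomes trivial and one may assume $\tau_2=+\infty$, after which your asymptotic analysis of the two scalar equations goes through. As a minor side remark, since $\varepsilon_k\to0$ the periodic profile $v_\varepsilon$ with $\varepsilon>0$ never arises as a limit $v_\infty$; only $v_\infty\equiv0$ and the single $\cosh$-bubble need to be considered.
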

\begin{proof}
 The proof is by contradiction. If \eqref{eq022} does not hold, then it is possible to find a sequence $(\varepsilon_k,R_k,T_k,\mathcal W_{T_k0},f_{0k})$ such that
 \begin{enumerate}
  \item $\varepsilon_k\rightarrow 0$ as $k\rightarrow +\infty$.
\item $\|f_0\|_{C_{\delta;d}^0(\log R_k,T_k)}=1$.
\item $\|\mathcal W_{T_k0}\|_{C_{\delta;d}^0(\log R_k,T_k)}\rightarrow+\infty$ as $k\rightarrow+\infty$.
  \item $$\left\{
\begin{array}{rcl}
\mathcal L_{\varepsilon,0}^{i}(\mathcal{W}_{T_k0}) & = &  f_{ik0} \;\;\mbox{ in } \;\;(-\infty,T_k)\\
w_{i0}(T_k) & = & 0,\\
\dot w_{i0}(T_k) & = & 0.\\
\end{array}
\right.
$$
 \end{enumerate}
 
 Choose $t_k\in[\log R_k,T_k]$ such that
 $$\|\mathcal W_{T_k0}\|_{C_{\delta;d}^0(\log R,T)}=e^{\delta t_k}|\mathcal W_{T_k}^0(t_k)|=:A_k.$$
Define
 $\mathcal W_k(t):=A_k^{-1}e^{\delta t_k}\mathcal W_{T_k0}(t+t_k)$ and $f_k(t):=A_k^{-1}e^{\delta t_k}f_0(t+t_k),$ for $t\in[\log R_k-t_k,T_k-t_k]$. From these definitions it follows that
$$\left\{
\begin{array}{rcl}
\mathcal L_{\varepsilon,0}^{i}(\mathcal{W}_{k}) & = &  f_{ik} \;\;\mbox{ in } \;\;(\log R_k-t_k,T_k-t_k)\\
\mathcal W_{k}(T_k-t_k) & = & 0,\\
\dot {\mathcal W}_{k}(T_k-t_k) & = & 0,\\
\end{array}
\right.
$$
\begin{equation}\label{eq019}
 \sup_{t\in[\log R_k-t_k,T_k-t_k]}e^{\delta t}|\mathcal W_k(t)|=|\mathcal W_k(0)|=1
\end{equation}
and 
$$\sup_{t\in[\log R_k-t_k,T_k-t_k]}e^{\delta t}|f_k(t)|\rightarrow0\;\;\mbox{ as }\;\;k\rightarrow+\infty.$$

As in the proof of Proposition \ref{prop001}, up to a subsequence, we have $\log R_k-t_k\rightarrow \tau_1\in[-\infty,0)$ and $T_k-t_k\rightarrow\tau_2\in(0,+\infty]$. Like before we get a function $\mathcal W_\infty$ and a Fowler solution $v_\infty$ such that $\mathcal W_k\rightarrow W_\infty$, $v_{\varepsilon_k}(t+t_k)\rightarrow v_\infty(t)$ and
\begin{equation}
\frac{d^{2}{w}_{i\infty}}{dt^{2}} - \frac{(n-2)^2}{4}{w}_{i\infty} + v_{\infty}^{\frac{4}{n-2}}\left(n\langle {\mathcal{W}_\infty},\Lambda\rangle \Lambda_{i} + \frac{n(n-2)}{4}{w}_{i\infty}\right)=0
\end{equation}
in $t\in(\tau_1,\tau_2)$. By \eqref{eq019} we get that $\mathcal W_\infty(0)=1$ and then $\mathcal W_\infty$ is non trivial.

If $\tau_2<+\infty$, then the Cauchy data for the limit problem are given by $\mathcal W_\infty(\tau_2)=\dot{\mathcal W}_\infty(\tau_2)=0$. This implies at once that $\mathcal W_\infty\equiv 0$.

\noindent{\bf Case 1.} $v_\infty\equiv 0$.

In this case, we have
\begin{equation}
\frac{d^{2}{w}_{i\infty}}{dt^{2}} - \frac{(n-2)^2}{4}{w}_{i\infty}=0,
\end{equation}
which implies that $w_{i\infty}(t)=c_ie^{\pm\frac{n-2}{2}t}$. But, the condition $|\mathcal W_\infty|\leq e^{-\delta t}$ and the fact that $\delta>\frac{n-2}{2}$ implies that $w_{i\infty}\equiv 0$.

\noindent{\bf Case 2.} $v_\infty>0$. 

In this case we have $\displaystyle\lim_{t\rightarrow \pm\infty}v_\infty(t)=0$. Thus, we get
$$\mathcal W_\infty(t)\sim e^{\pm\frac{n-2}{2}t}.$$
Again, in the same way as in the Case 1 we get that $\mathcal W_{\infty}\equiv 0$.

In both case we have a contradiction.\qedhere
\end{proof}

Since the estimate \eqref{eq022} is independent of the parameter $T>\log R$, we let $T\rightarrow+\infty$ and we obtain a
function $\mathcal W_0$ which verifies the identity
$$\mathcal L^i_{\varepsilon,0}(\mathcal W_0)=f_0\;\;\mbox{ in }\mathbb{R}$$
together with the estimate
$$\|\mathcal W_0\|_{C^0_{\gamma;d}(\log R,+\infty)}\leq C\|f_0\|_{C^0_{\gamma;d}(\log R,+\infty)},$$
where $\gamma>\frac{n-2}{2}$ and the positive constant $C$ does not depend on $\varepsilon$ and $R$. Moreover, we notice that if $f_0\in C^{0,\alpha}_{\gamma;d}(\log R,+\infty)$ then $\mathcal W_0\in C^{2,\alpha}_{\gamma;d}(\log R,+\infty)$ and there exists a positive constant $C$, which does not depend on $\varepsilon$ and $R$, such that 
$$\|\mathcal W_0\|_{C^{2,\alpha}_{\gamma;d}(\log R,+\infty)}\leq C\|f_0\|_{C^{0,\alpha}_{\gamma;d}(\log R,+\infty)},$$
for every $\varepsilon\in(0,\varepsilon)$.

Now we are ready to treat the projection of \eqref{sol2} along the eigenfunction $\phi_j$, with $j=1,\ldots,n$
\begin{equation}\label{eq020}
\left\{
\begin{array}{rcl}
\mathcal L_{\varepsilon,j}^{i}(\mathcal{W}_{j}) & = &  f_{i0} \;\;\mbox{ in } \;\;(\log R,+\infty)\\
w_{i}^j(\log R) & = & 0.
\end{array}
\right.
\end{equation}

Proceeding in the same manner as in the case $j=0$, we are led to consider the auxiliary backward Cauchy problem
\begin{equation}\label{eq021}
\left\{
\begin{array}{rcl}
\mathcal L_{\varepsilon,j}^{i}(\mathcal{W}) & = &  f_{ij} \;\;\mbox{ in } \;\;(-\infty,T)\\
\mathcal W(T) & = & 0\\
\dot{\mathcal W}(T) & = & 0,
\end{array}
\right.
\end{equation}
where, with the usual abuse of notations, $f_{ij}$ denotes the extension to the whole $\mathbb R$ of the former data. The Cauchy-Lipschitz Theorem guarantees the existence of a solution $\mathcal W_{Tj}$ to \eqref{eq021} for every $j=1,\ldots,n$.

\begin{proposition}\label{prop002}
	Let $\mathcal W_{Tj}\in C_{\delta;d}^2(\log R,T)$ be a solution of \eqref{eq021} with $f_{ij}\in C_\delta^0(-\infty,T)$, for $j=1,\ldots,n$. Then, for every $\delta>\frac{n}{2}$, there exist constants $\varepsilon_0>0$ and $C>0$, such that for every $(0,\varepsilon_0]$, $R>0$, $T>\log R$ we have
	\begin{equation}\label{eq018}
	\|\mathcal W_{Tj}\|_{C_{\delta;d}^0(\log R,T)}\leq \|f_j\|_{C_{\delta;d}^0(\log R,T)}.
	\end{equation}
\end{proposition}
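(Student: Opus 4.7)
The plan is to run the same rescaling/contradiction argument used for Proposition \ref{propo001}, but adapted to the modes $j=1,\ldots,n$. The only structural change is that the eigenvalue of the spherical Laplacian jumps from $\lambda_0=0$ to $\lambda_j=n-1$, so the indicial roots of the limiting constant-coefficient ODE become $\pm n/2$ rather than $\pm(n-2)/2$. This is precisely why the critical weight threshold is raised from $\delta>(n-2)/2$ to $\delta>n/2$: the weight must dominate the largest exponential rate at which solutions of the limit problem can decay at infinity.

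Assuming \eqref{eq018} fails, I would extract sequences $(\varepsilon_k,R_k,T_k,\mathcal W_{T_k j},f_{jk})$ with $\varepsilon_k\to 0$, $\|f_{jk}\|_{C^0_{\delta;d}(\log R_k,T_k)}=1$, and $A_k:=\|\mathcal W_{T_kj}\|_{C^0_{\delta;d}(\log R_k,T_k)}\to+\infty$. Choosing $t_k$ where the weighted supremum is attained and rescaling
$$\mathcal W_k(t):=A_k^{-1}e^{\delta t_k}\mathcal W_{T_kj}(t+t_k),\qquad f_k(t):=A_k^{-1}e^{\delta t_k}f_{jk}(t+t_k),$$
on $(\log R_k-t_k,T_k-t_k)$, one gets $\sup_t e^{\delta t}|\mathcal W_k(t)|=|\mathcal W_k(0)|=1$ and $\sup_t e^{\delta t}|f_k(t)|\to 0$. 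Passing to a subsequence, $\log R_k-t_k\to\tau_1\in[-\infty,0)$, $T_k-t_k\to\tau_2\in(0,+\infty]$, and $v_{\varepsilon_k}(\cdot+t_k)\to v_\infty$ in $C^1_{\mathrm{loc}}(\mathbb{R})$. Schauder estimates together with Arzelà--Ascoli then yield a nontrivial limit $\mathcal W_\infty$ (since $|\mathcal W_\infty(0)|=1$) satisfying
$$\ddot w_{i\infty}-\tfrac{(n-2)^2}{4}w_{i\infty}-(n-1)w_{i\infty}+v_\infty^{4/(n-2)}\Bigl(n\langle\mathcal W_\infty,\Lambda\rangle\Lambda_i+\tfrac{n(n-2)}{4}w_{i\infty}\Bigr)=0$$
on $(\tau_1,\tau_2)$, together with the pointwise bound $|\mathcal W_\infty(t)|\le e^{-\delta t}$.

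The argument closes with a case analysis in the same spirit as Proposition \ref{propo001}. If $\tau_2<+\infty$, the Cauchy data $\mathcal W_\infty(\tau_2)=\dot{\mathcal W}_\infty(\tau_2)=0$ inherited from \eqref{eq021} force $\mathcal W_\infty\equiv 0$, contradicting $|\mathcal W_\infty(0)|=1$. If $\tau_2=+\infty$ and $v_\infty\equiv 0$, the system uncouples to $\ddot w_{i\infty}-\tfrac{n^2}{4}w_{i\infty}=0$, whose solutions are linear combinations of $e^{\pm n t/2}$, both annihilated by the decay $|\mathcal W_\infty|\le e^{-\delta t}$ with $\delta>n/2$. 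If instead $v_\infty$ is a positive limiting profile with $v_\infty(t)\to 0$ as $t\to\pm\infty$, the asymptotic ODE at infinity is the same constant-coefficient equation, so $w_{i\infty}\sim e^{\pm n t/2}$ at $\pm\infty$ and is again ruled out.

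The main obstacle I anticipate is verifying that in the modes $j=1,\ldots,n$ every concentration profile $v_\infty$ of the sequence $v_{\varepsilon_k}(\cdot+t_k)$ produces a limiting operator whose indicial rate at infinity is exactly $n/2$, and in particular recognising that the geometric Jacobi fields of the Fowler solution in these modes, which come from translating the singular point (as reflected by the expansion of $u_{\varepsilon,R,a}$ in Proposition \ref{propo002}), decay at precisely this sharp rate. Once this is in hand, the gap $\delta>n/2$ excludes them all, in exact parallel with the role played by $\delta>(n-2)/2$ for $j=0$; the passage $T\to+\infty$ then produces a bounded right inverse on the projection onto $\phi_1,\ldots,\phi_n$ with constants independent of $\varepsilon$ and $R$.
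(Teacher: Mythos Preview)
Your proposal is correct and follows exactly the approach of the paper, which simply states that the proof is identical to that of Proposition \ref{propo001} with the eigenvalue $\lambda_j=n-1$ replacing $\lambda_0=0$, so that the indicial roots become $\pm n/2$ and the weight restriction becomes $\delta>n/2$. Your anticipated ``main obstacle'' is not really one: since $\varepsilon_k\to 0$, the limiting profile $v_\infty$ is either identically zero or a single-bump solution vanishing at $\pm\infty$, and in both cases the asymptotic ODE has characteristic roots $\pm n/2$, which is all you need.
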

\begin{proof}
	The proof of this proposition is identical to the one of Proposition \ref{propo001}. The slightly diferent choice of the range of the weights is due to the fact that for $j=1,\ldots,n$	the eigenvalue $\lambda_j$ is equal $n-1$ whereas for $j=0$ we had $\lambda_0=0$.
\end{proof}

Arguing as in the case $j=0$, we let now $T\rightarrow+\infty$ and we get, for every $j=1,\ldots,n$, a solution $\mathcal W_j$ to the equation 
$\mathcal L^i_{\varepsilon,R}(\mathcal W_j)=f_{ij}\;\;\mbox{ in }\mathbb R$. Moreover, if $f_j\in C^{0,\alpha}_{\gamma;d}(\log R,+\infty)$ with $\gamma>\frac{n}{2}$, then $\mathcal W_j\in C^{2,\alpha}_{\gamma;d}(\log R,+\infty)$ and
$$\|\mathcal W_j\|_{C^{2,\alpha}_{\gamma;d}(\log R,+\infty)}\leq C\|\mathcal W_j\|_{C^{0,\alpha}_{\gamma;d}(\log R,+\infty)},$$
for some constant $C>0$ independently of $\varepsilon$ and $R$.

Note that there is no reason why the solutions $\mathcal W_j$, for all $j=0,\ldots,n$, satisfy the boundary condition at $t=\log R$. But, as in \cite{MP}, we can use the {\it Jacobi fields}, which are solutions of the equation $\mathcal L_{\varepsilon}(\Phi)=0$, to get solutions $\overline{\mathcal W}_j(\log R)=0$ and still satisfing the estimates in Propositions  \ref{propo001} and \ref{prop002} (see \cite{MP} for more details).
\subsubsection{A right inverse}

We can now collect all the results of the previous sections to state the following proposition. Before, we define the projection $\pi_r''$ onto the high frequencies space by the formula
$$\pi_r''(\varphi)(r\theta)=\sum_{j=n+1}^\infty\varphi_j(r)\phi_j(\theta),$$
where
$$\varphi(r\theta)=\sum_{j=0}^\infty\varphi_j(r)\phi_j(\theta).$$

We often will write the functions spaces with a superscript symbol $\perp$ to indicate that $\pi_r''(\phi)=\phi$ for all function $\phi$ belongs to that space, for example
$$C^{k,\alpha}(\mathbb S_r^{n-1})^\perp:=\{\phi\in C^{k,\alpha}(\mathbb S_r^{n-1});\pi_r(\phi)=\phi\}.$$

\begin{proposition}\label{propo003}
Fix $\mu\in\left(1,2\right)$. There exists an $\varepsilon_0>0$ such that, for all $\varepsilon\in(0,\varepsilon_0)$, there exists an operator
$$G_{\varepsilon,R,r}:C^{0,\alpha}_{\mu-2;d}(B_r(0)\backslash\{0\})\rightarrow C^{2,\alpha}_{\mu;d}(B_r(0)\backslash\{0\}),$$
such that for each $f\in C^{0,\alpha}_{\mu-2;d}(B_r(0)\backslash\{0\})$, the function $\mathcal W=G_{\varepsilon,R,r}(f)$ solves the equation
$$\left\{\begin{array}{lcl}
\mathcal L_{\varepsilon,R}(\mathcal W)=f & \mbox{ in } & B_r(0)\backslash\{0\}\\
\pi_r''(\mathcal W|_{\partial B_1(0)})=0 & \mbox{ on } & \partial B_r(0),
\end{array}\right.$$
and satisfies the bounded
$$\|f\|_{(0,\alpha),\mu-2;d}\leq C\|\mathcal W \|_{(2,\alpha),\mu;d},$$
for some constant $C>0$ which does not depends on $\varepsilon$ and $R$. Moreover, if $f\in C^{0,\alpha}_{\mu-2;d}(B_r(0)\backslash\{0\})^\perp $, then $\mathcal W\in C^{2,\alpha}_{\mu;d}(B_r(0)\backslash\{0\})^\perp$ and we may take $\mu \in (-n,2)$.
\end{proposition}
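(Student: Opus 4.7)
The plan is to assemble $G_{\varepsilon,R,r}$ by decomposing $f$ along spherical harmonics $\phi_j$ and piecing together the inverses produced in the previous two subsections. After the rescaling $x\mapsto x/R$, $f\mapsto R^{-2}f(\cdot/R)$ used to pass from \eqref{sol} to \eqref{sol2}, it suffices to invert $\mathcal L_\varepsilon$ on $B_{1/R}(0)\setminus\{0\}$. Writing
\[
f_i(x)=|x|^{-(n+2)/2}\sum_{j=0}^\infty f_{ij}(-\log|x|)\,\phi_j(\theta),
\]
I split $f=\pi''f+\sum_{j=0}^n f_{j}\phi_j$ into high and low frequencies.

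For the high-frequency projection the isomorphism statement at the end of Subsection~3.1.1 directly produces $\overline{\mathcal W}\in[C^{2,\alpha}_{\mu;d}(B_1(0)\setminus\{0\})^\perp]_0$ solving $\mathcal L_\varepsilon(\overline{\mathcal W})=\pi''f$ with zero Dirichlet data on $\partial B_1(0)$, for every $\mu\in(-n,2)$, with norm controlled uniformly in $\varepsilon$ and $R$. This already proves the last assertion of the proposition (the $(\cdot)^\perp$ case). For each low-frequency projection $j=0,\ldots,n$, Propositions \ref{propo001} and \ref{prop002} provide a solution $\mathcal W_j$ on $[\log R,+\infty)$ of the ODE $\mathcal L^i_{\varepsilon,j}(\mathcal W_j)=f_{ij}$ with the prescribed exponential decay as $t\to+\infty$. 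Translating back via $w_i(x)=|x|^{(2-n)/2}W_i(-\log|x|)$, the constraint $\delta>(n-2)/2$ from Proposition \ref{propo001} becomes $\mu>0$, and $\delta>n/2$ from Proposition \ref{prop002} becomes $\mu>1$; intersecting with the high-frequency range $(-n,2)$ forces $\mu\in(1,2)$, matching the hypothesis.

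The $\mathcal W_j$ produced by the Cauchy-problem argument do not in general satisfy $\mathcal W_j(\log R)=0$. To correct this, I follow \cite{MP} and add a Jacobi field $\Phi_j$, i.e.\ a solution of the homogeneous ODE $\mathcal L^i_{\varepsilon,j}(\Phi_j)=0$, chosen within the branch that decays as $t\to+\infty$ (this branch exists by Lemma \ref{lema2} applied with the indicial roots appropriate to each $j$) and scaled so that $(\mathcal W_j+\Phi_j)(\log R)=0$. Setting
\[
G_{\varepsilon,R,1}(f):=\overline{\mathcal W}+\sum_{j=0}^n(\mathcal W_j+\Phi_j)(-\log|x|)\,|x|^{(2-n)/2}\phi_j(\theta)
\]
and undoing the rescaling gives a candidate right inverse, and Schauder estimates upgrade the $C^0_\delta$ bounds to the full $C^{2,\alpha}_\mu$ norms as in the remark following the high-frequency proposition.

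The main obstacle will be verifying that the Jacobi-field correction is uniform in $\varepsilon$ and $R$. Concretely, one must check that $|\Phi_j(\log R)|$ is comparable to $|\mathcal W_j(\log R)|$, which in turn is bounded by the weighted norm of $f_{ij}$ via Proposition \ref{propo001}/\ref{prop002}, and that the full $C^{2,\alpha}_\mu$ norm of the decaying branch $\Phi_j$ is controlled by $|\Phi_j(\log R)|$ times a constant depending only on the indicial exponent. For this one uses the explicit linear-ODE description of the Jacobi fields on the Fowler background together with the fact that, under the hypotheses of Lemma \ref{lema2}, the two Jacobi field branches are genuinely independent with exponential separation, so picking the decaying branch is well-defined and continuous in $\varepsilon$. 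Once this is established the required bound on $G_{\varepsilon,R,r}$ follows by summing the high- and low-frequency contributions.
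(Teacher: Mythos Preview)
Your assembly of $G_{\varepsilon,R,r}$ by splitting $f$ into high and low spherical-harmonic modes, invoking the isomorphism of Subsection~3.1.1 on the orthogonal part, solving the low modes $j=0,\dots,n$ via the backward Cauchy problems of Propositions~\ref{propo001} and~\ref{prop002}, and then upgrading to H\"older norms by Schauder, is exactly the paper's route (which in turn follows \cite{MP}); the weight dictionary $\delta=\mu+(n-2)/2$ and the resulting range $\mu\in(1,2)$ are correctly read off.

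One point needs correction. You justify the Jacobi-field step by appealing to Lemma~\ref{lema2} to produce a \emph{decaying} homogeneous branch for each low mode, but that lemma is stated and proved only for $j\ge n+1$. For $j=0,\dots,n$ the homogeneous solutions of $\mathcal L_{\varepsilon,j}^i(\Phi)=0$ are the geometric Jacobi fields of the Fowler family (e.g.\ $\dot v_\varepsilon$ and the variation in $\varepsilon$ for $j=0$, and the translational fields for $j=1,\dots,n$); these are bounded or have linear growth in $t$, not exponentially decaying, so the ``decaying branch'' you describe does not exist in those modes and your uniformity argument based on exponential separation does not apply as written. The paper's remark before Proposition~\ref{propo003} uses precisely these bounded/polynomial Jacobi fields from \cite{MP}, and the uniform control comes from a different mechanism. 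Alternatively, note that the boundary condition in the proposition is only $\pi_r''(\mathcal W|_{\partial B_r(0)})=0$: the low-frequency components are not required to vanish on $\partial B_r(0)$, so for the statement as given one may simply omit the Jacobi-field correction and take the backward-Cauchy solutions directly.
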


In fact, we will work with the function $u_{\varepsilon,R,a}$, and so, we need to find an inverse to $\mathcal L_{\varepsilon,R,a}$ with norm bounded independently of $\varepsilon$, $R$, $a$ and $r$. But this is the content of the next corollary, whose proof is a perturbation argument.

\begin{corollary}\label{inver} Let $ \mu \in (1,2)$, $\alpha \in (0,1)$ and $\varepsilon_0>0$ given by Proposition \ref{propo003}. Then for all $\varepsilon\in(0,\varepsilon_0)$, $R>0$, $a\in\mathbb R^n$ and $0<r<1$ with $|a|r\leq r_0$ for some $r_0\in(0,1)$ fixed, there is an operator
	\begin{equation*}
	G_{\varepsilon,R,r,a}: C^{0,\alpha}_{\mu -2;d}(B_{r}(0)\backslash \{0\})\rightarrow C^{2,\alpha}_{\mu;d}(B_{r}(0)\backslash\{0\})
	\end{equation*}
	with norm bounded independently of $\varepsilon$,  $R$, $r$ and $a$, such that for each $f \in C^{0,\alpha}_{\mu-2;d}(B_{r}(0)\backslash \{0\})$, the function $\mathcal W := G_{\varepsilon,R,r,a}(f)$ solves the system
	\begin{align*}\left\{
	\begin{array}{l}
	\mathcal{L}_{\varepsilon,R,a}(\mathcal W) = f \quad \mbox{ in } \quad B_{r}(0)\backslash \{0\} \\ 
	\pi_{r}''(\mathcal W|_{\mathbb{S}_r^{n-1}}) = 0 \quad \mbox{ on } \quad  \partial B_{r}(0)\backslash \{0\}.
	\end{array}\right.
	\end{align*}
	
\end{corollary}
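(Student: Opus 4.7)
The plan is to view $\mathcal{L}_{\varepsilon,R,a}$ as a small perturbation of $\mathcal{L}_{\varepsilon,R}$ and to construct the desired right inverse via a Neumann-series argument built on top of $G_{\varepsilon,R,r}$ from Proposition~\ref{propo003}. First I would decompose $\mathcal{L}_{\varepsilon,R,a}=\mathcal{L}_{\varepsilon,R}+E_{\varepsilon,R,a}$, where the error term is the multiplication operator
$$E^{i}_{\varepsilon,R,a}(\mathcal W)=\bigl(u_{\varepsilon,R,a}^{4/(n-2)}-u_{\varepsilon,R}^{4/(n-2)}\bigr)\Bigl(n\langle\Lambda,\mathcal W\rangle\Lambda_i+\tfrac{n(n-2)}{4}w_i\Bigr).$$
A Taylor expansion of $s\mapsto s^{4/(n-2)}$ about $s=u_{\varepsilon,R}$, combined with the second-order expansions of Proposition~\ref{propo002}(ii) (including the $O''$ bounds, which also control two derivatives of $u_{\varepsilon,R,a}-u_{\varepsilon,R}$), produces a uniform estimate for the $C^{0,\alpha}$ norm of the coefficient $u_{\varepsilon,R,a}^{4/(n-2)}-u_{\varepsilon,R}^{4/(n-2)}$ on each dyadic annulus $\{s\leq|x|\leq 2s\}\subset B_r(0)\backslash\{0\}$. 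Tracking the $\varepsilon,R,a$ dependence across the two regimes $|x|\leq R$ and $R\leq|x|\leq r$ that appear in Proposition~\ref{propo002}, one obtains the multiplier bound
$$\|E_{\varepsilon,R,a}\|_{C^{2,\alpha}_{\mu;d}(B_r(0)\backslash\{0\})\to C^{0,\alpha}_{\mu-2;d}(B_r(0)\backslash\{0\})}\leq C_0\,|a|\,r,$$
with $C_0$ independent of $\varepsilon,R,a,r$ whenever $|a|r\leq r_0$.

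Having established this, I would combine it with the uniform bound $\|G_{\varepsilon,R,r}\|\leq C_1$ furnished by Proposition~\ref{propo003}, and shrink $r_0\in(0,1)$ if necessary so that $C_0C_1r_0<1/2$. Then $\mathrm{Id}+E_{\varepsilon,R,a}\circ G_{\varepsilon,R,r}$ is invertible on $C^{0,\alpha}_{\mu-2;d}(B_r(0)\backslash\{0\})$ by Neumann series, with inverse of norm at most $2$, and I would set
$$G_{\varepsilon,R,r,a}:=G_{\varepsilon,R,r}\circ\bigl(\mathrm{Id}+E_{\varepsilon,R,a}\circ G_{\varepsilon,R,r}\bigr)^{-1}.$$
A one-line algebraic check gives $\mathcal{L}_{\varepsilon,R,a}\circ G_{\varepsilon,R,r,a}=\mathrm{Id}$, and since composition on the right does not alter boundary traces, the projection condition $\pi''_r(\mathcal W|_{\mathbb{S}_r^{n-1}})=0$ is automatically inherited from the range of $G_{\varepsilon,R,r}$. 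The uniform operator-norm bound $\|G_{\varepsilon,R,r,a}\|\leq 2C_1$ then follows by composition.

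The main obstacle is the weighted-Hölder multiplier estimate for $E_{\varepsilon,R,a}$, in particular securing uniformity of the constant $C_0$ across the full range of parameters. The delicate point is that the coefficient difference $u_{\varepsilon,R,a}^{4/(n-2)}-u_{\varepsilon,R}^{4/(n-2)}$ exhibits distinct scaling behaviour in the inner region $|x|\leq R$ and the outer region $|x|\geq R$, and the $O''$-type bounds of Proposition~\ref{propo002} are exactly what is needed to control both regimes simultaneously when weighted by $|x|^{2-\mu}$. Once the multiplier bound is in hand, the Neumann-series step closes the argument routinely.
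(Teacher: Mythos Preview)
Your proposal is correct and follows essentially the same approach as the paper: decompose $\mathcal{L}_{\varepsilon,R,a}=\mathcal{L}_{\varepsilon,R}+E_{\varepsilon,R,a}$, establish the multiplier bound $\|E_{\varepsilon,R,a}\|\leq C_0|a|r$ via the expansions of the Fowler solution, and conclude by a Neumann-series perturbation. The paper states this more tersely---it writes down the difference $\mathcal{L}_{\varepsilon,R,a}-\mathcal{L}_{\varepsilon,R}$, records the estimate $\|(\mathcal{L}_{\varepsilon,R,a}-\mathcal{L}_{\varepsilon,R})\mathcal W\|_{(0,\alpha),\mu-2,r;d}\leq cr|a|\|\mathcal W\|_{(2,\alpha),\mu,r;d}$ citing \cite{MP}, and then says ``the result follows by a perturbation argument''---but your explicit Neumann-series construction is precisely what that phrase unpacks to.
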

\begin{proof}
First, we notice that
$$(\mathcal L_{\varepsilon,R,a}^i-\mathcal L_{\varepsilon,R}^i)(\mathcal W)=\left(n\langle \Lambda,\mathcal W\rangle\Lambda_i+\frac{n(n-2)}{4}w_i\right)(u_{\varepsilon,R,a}^{\frac{4}{n-2}}-u_{\varepsilon,R}^{\frac{4}{n-2}}).$$
Using this and the properties of the Fowler solution (see \cite{MP}) we obtain that
$$\|(\mathcal L_{\varepsilon,R,a}-\mathcal L_{\varepsilon,R})\mathcal W\|_{(0,\alpha),\mu-2,r;d}\leq cr|a|\|\mathcal W\|_{(2,\alpha),\mu,r;d},$$
for some positive constant $c>0$ which does not depend on $\varepsilon$, $R$, $a$ and $r$. The result follows by a perturbation argument.
\end{proof}

\subsection{Analysis in \texorpdfstring{$M_r:=M\backslash B_r(p)$}{Lg}}\label{sec006}

Since we are assuming that the metric $g$ is nondegerenate in the sense that $L_{g_{0}}: C^{2,\alpha}(M)^d\rightarrow C^{0,\alpha}(M)^d$ is surjective for some $\alpha \in (0,1)$, see Definition \ref{def001}, we can prove the following proposition. But first, we remember from Lemma 13.23 in \cite{jleli} that for $\nu\in(1-n,2-n)$ and $0<2r<s$ there exists an operator 
$P:C_{\nu-2}^{0,\alpha}(A_{r,s})\rightarrow C_{\nu}^{2,\alpha}(A_{r,s})$, where $A_{r,s}=B_s(0)\backslash B_r(0)\subset\mathbb R^n$, such that
\begin{equation}\label{eq029}
    \left\{\begin{array}{rcl}
     \Delta P(f)=f & \mbox{ in } & A_{r,s}  \\
     w = 0 & \mbox{ on } & \partial B_s(0)\\
     w\in\mathbb R & \mbox{ on } & \partial B_r(0)
\end{array}\right.,
\end{equation}
and
$$\|P(f)\|_{C_{\nu}^{2,\alpha}(A_{r,s})}\leq C\|f\|_{C_{\nu-2}^{0,\alpha}(A_{r,s})}.$$

Using this operator we can prove the following.

\begin{proposition}\label{propo004}
Fix $\nu \in (1-n,2-n)$. There exists $r_{2}>0$ such that, for all $r \in (0,r_{2})$ we can define an operator 
	$G_{r, g_{0}} : C^{0,\alpha}_{\nu-2;d}(M_{r})\rightarrow C^{2,\alpha}_{\nu;d}(M_{r}),$
such that, for all $f \in C^{0,\alpha}_{\nu-2;d}(M_{r})$ the function $w = G_{r, g_{0}}(f)=(w_1,\ldots,w_d)$ solves $
	L_{g_{0}}(w) = f$ in $M_{r}$
with $w_{i}\in \R$ constant along $\partial B_{r}(p)$. In addition
\begin{equation*}
	\|G_{r,g_{0}}(f)\|_{C^{2,\alpha}_{\nu;d}(M_{r})} \leq C\|f\|_{C^{0,\alpha}_{\nu;d}(M_{r})}
\end{equation*}
where $C>0$ does not depend on $r$.
\end{proposition}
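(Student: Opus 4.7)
My plan is to build $G_{r,g_{0}}$ as a perturbation of an approximate right inverse constructed from two ingredients: the Euclidean operator $P$ from \eqref{eq029}, applied componentwise to absorb the singular behavior of $f$ near $\partial B_{r}(p)$ while enforcing the constant--boundary trace there, and a global right inverse $\mathcal{G}:C^{0,\alpha}(M)^{d}\to C^{2,\alpha}(M)^{d}$ of $L_{g_{0}}$ on the closed manifold $M$, which is furnished by the nondegeneracy hypothesis of Definition \ref{def001} (surjective Fredholm operators between Banach spaces with finite-dimensional, hence complemented, kernel admit bounded sections). Fix a cutoff $\chi$ on $M$ equal to $1$ on $\Psi(B_{r_{1}/2}(0))$ and supported in $\Psi(B_{3r_{1}/4}(0))$, and split $f=\chi f+(1-\chi)f$. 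I pull $\chi f$ back via $\Psi$ to the Euclidean annulus $A_{r,r_{1}}$ and set $u_{\text{in}}^{i}:=P((\chi f_{i})\circ \Psi)$; pushing forward to $\Omega_{r,r_{1}}$ and extending by $0$ across $M_{r_{1}}$ produces $U_{\text{in}}\in C^{2,\alpha}_{\nu;d}(M_{r})$ with each component constant on $\partial B_{r}(p)$ and with norm bounded by $C\|f\|_{C^{0,\alpha}_{\nu-2;d}(M_{r})}$. For the outer piece, I extend $(1-\chi)f$ by $0$ across $B_{r}(p)$ (smooth, since $(1-\chi)$ vanishes near $p$) and set $U_{\text{out}}:=\mathcal{G}((1-\chi)f)$; an easy check using $|x|\geq r_{1}/2$ on the support of $1-\chi$ gives $\|(1-\chi)f\|_{C^{0,\alpha}(M)^{d}}\leq C\|f\|_{C^{0,\alpha}_{\nu-2;d}(M_{r})}$ uniformly in $r$. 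The parametrix is $\widetilde{G}_{r}(f):=U_{\text{in}}+U_{\text{out}}|_{M_{r}}$.

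Next I control the residual $R_{r}(f):=L_{g_{0}}(\widetilde{G}_{r}(f))-f$. Three sources contribute, all supported inside $\Psi(B_{r_{1}}(0))$: (i) the commutator $(\Delta_{g_{0}}-\Delta)U_{\text{in}}$, which is $O(r^{\sigma})$ in the weighted norm because $g_{0}$ is Euclidean at $p$ modulo $O(|x|^{2})$ in normal coordinates; (ii) the lower-order algebraic terms of $L_{g_{0}}$ applied to $U_{\text{in}}$, which gain an $r^{2}$ factor upon passing from weight $\nu$ to weight $\nu-2$; (iii) cutoff commutators supported on the fixed annulus $\Psi(B_{3r_{1}/4}(0)\setminus B_{r_{1}/2}(0))$ where the weighted and unweighted $C^{0,\alpha}$-norms are comparable. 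Combining these three bounds yields $\|R_{r}\|\leq Cr^{\sigma}$ for some $\sigma>0$. Choosing $r_{2}$ small enough that $\|R_{r}\|\leq 1/2$, the Neumann series $(I+R_{r})^{-1}=\sum_{k\geq 0}(-R_{r})^{k}$ converges uniformly in $r\in(0,r_{2})$, and
\[
G_{r,g_{0}}:=\widetilde{G}_{r}\circ(I+R_{r})^{-1}
\]
is an exact right inverse obeying $\|G_{r,g_{0}}(f)\|_{C^{2,\alpha}_{\nu;d}(M_{r})}\leq C\|f\|_{C^{0,\alpha}_{\nu-2;d}(M_{r})}$ with constant independent of $r$.

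The step I expect to be the main obstacle is preserving the constant-boundary condition on $\partial B_{r}(p)$ throughout the iteration: by construction $U_{\text{in}}$ has each component constant there, but $U_{\text{out}}$ is merely smooth across $\partial B_{r}(p)$, not constant on it. Because $L_{g_{0}}U_{\text{out}}$ vanishes in a neighborhood of $p$, interior elliptic regularity gives the Taylor expansion $U_{\text{out}}(x)=U_{\text{out}}(p)+O(r)\|f\|$ for $|x|=r$, so the non-constant defect is of size $O(r)$; I plan to absorb this defect by a further application of $P$ to a compensating source, equivalently by modifying $\mathcal{G}$ by a finite-rank adjustment so that every function in its range has a prescribed constant value on $\partial B_{r}(p)$ (one real parameter per coordinate). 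Folded into the Neumann iteration, this keeps the constant-boundary constraint invariant at every order, and the resulting $G_{r,g_{0}}$ satisfies all the required properties.
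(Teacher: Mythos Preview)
Your overall architecture---build a parametrix from the annular inverse $P$ of \eqref{eq029} together with a global right inverse coming from nondegeneracy, then close by a Neumann series---is exactly the paper's strategy. Two of your steps, however, do not go through as written.

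First, the residual is not $O(r^{\sigma})$. The contributions you list in (i) and (ii) live on the whole annulus $A_{r,r_{1}}$, and at the fixed outer scale $|x|\sim r_{1}$ the coefficient error $(\Delta_{g_{0}}-\Delta)U_{\mathrm{in}}$ and the zeroth-order part of $\mathcal L_{g_{0}}$ acting on $U_{\mathrm{in}}$ each contribute $O(r_{1}^{2})$, not $O(r^{\sigma})$, to the weighted $C^{0,\alpha}_{\nu-2}$ norm: smallness of $g_{0}-\delta$ \emph{at the point $p$} does not make these norms small as $r\to 0$. (Also, extending $U_{\mathrm{in}}$ by zero across $\partial B_{r_{1}}$ is only $C^{0}$, so $\mathcal L_{g_{0}}U_{\mathrm{in}}$ picks up a surface term there; a cutoff is needed.) The iteration can be salvaged---the smallness comes from shrinking the fixed auxiliary radii, not from sending $r\to 0$---but your stated mechanism is wrong.

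Second, and this is the real gap, your treatment of the constant-boundary constraint does not work. Forcing $\mathcal{G}((1-\chi)f)$ to be constant on $\partial B_{r}(p)$ is an infinite-dimensional constraint, not a finite-rank one, so the ``finite-rank adjustment'' you propose is not well defined; and absorbing the $O(r)$ non-constant defect by another application of $P$ would have to be redone at every step of the Neumann iteration, with no reason for convergence. The paper avoids the issue entirely with a device you are missing: it cuts off the global correction near $p$. Fix $\rho\in(0,r_{1}/4)$ and replace your $U_{\mathrm{out}}$ by $\chi_{2}\,(\mathcal L_{g_{0}})^{-1}(h)$, where $\chi_{2}\equiv 0$ on $B_{\rho}(p)$ and $\chi_{2}\equiv 1$ on $M_{2\rho}$. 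For every $r<\rho$ the outer piece then vanishes identically on $\partial B_{r}(p)$, so the parametrix has constant trace there automatically, and this persists through every step of the iteration with no further work. The only residual is the commutator $[\mathcal L_{g_{0}},\chi_{2}](\mathcal L_{g_{0}})^{-1}(h)$, supported in the fixed shell $\{\rho<|x|<2\rho\}$; its $C^{0,\alpha}_{\nu-2}$ norm is $O(\rho^{-\nu})\|f\|$, which (since $\nu<0$) is made smaller than $\tfrac12$ by choosing $\rho$ small but fixed, independently of $r$. That is the ``perturbation argument'' the paper invokes.
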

\begin{proof}
Using a perturbation argument we can find an operator, still denoted by $P:C_{\nu-2}^{0,\alpha}(A_{r,r_1})\rightarrow C_{\nu}^{2,\alpha}(A_{r,r_1})$ such that it solves a similar equation as in \eqref{eq029} with the laplacian $\Delta$ replaced by $\mathcal L_{g_0}^i$, see \eqref{li}.

Let $f=(f_1,\ldots,f_d)\in C^{0,\alpha}_{\nu-2;d}(M_r)$ e define a function $\eta\in C^{2,\alpha}_{\nu;d}(M_r)$ by $\eta:=\chi_1(P(f_1),\ldots,P(f_d))$, where $\chi_1$ is a smooth, radial function equal to 1 in $B_{r_1/2}(p)$ and vanishing in $M_{r_1}$. Note that
\begin{equation}\label{eq030}
    \|\eta\|_{C^{2,\alpha}_{\nu;d}(M_r)}\leq C\|f\|_{C^{0,\alpha}_{\nu-2;d}(M_r)},
\end{equation}
for some positive constant $C$ which does not depend on $r$ and $r_1$. Now, define $h:=f-\mathcal L_{g_0}(\eta)$ and note that it is supported in $M_{r_1/2}$. Thus we can consider $h\in C^{0,\alpha}(M)$ with $h\equiv 0$ in $B_{r_1/2}(p)$. Therefore, using \eqref{eq030} we obtain
$$\|h\|_{C_{;d}^{0,\alpha}(M)}\leq C_{r_1}\|f\|_{C_{\nu-2;d}^{0,\alpha}(M_r)},$$
with $C_{r_1}>0$ independent of $r$.

Since $g_0$ is nondegenerate, then $\mathcal L_{g_0}:C_{;d}^{2,\alpha}(M)\rightarrow C_{;d}^{0,\alpha}(M)$ has a bounded inverse. Thus, define $\eta_1:=\chi_2(\mathcal L_{g_0})^{-1}(h)$, where $\chi_2$ is a smooth radial function equal to 1 in $M_{2r_2}$ and vanishing in $B_{r_2}(p)$ for some $r_2\in(r,r_1/4)$. By the previous estimates we get that
$$\|\eta_1\|_{C^{2,\alpha}_{\nu;d}(M_r)}\leq C\|f\|_{C^{0,\alpha}_{\nu-2;d}(M_r)}.$$

Now, define a map $F_{r}:C^{0,\alpha}_{\nu-2;d}(M_r)\rightarrow C^{2,\alpha}_{\nu-2;d}(M_r)$ as $F_r(f)=\eta+\eta_1$. Now the result follows by a perturbation argument.

\end{proof}


\section{Nonlinear Analysis}\label{sec009}

Now that we already have the right inverses, we will solve the system \eqref{S} in the punctured geodesic ball and in its complement. We will do that using a perturbation technique and a fixed point argument by using the expansion \eqref{eq057} and the right inverses constructed in Corollary \ref{inver} and in Proposition \ref{propo004}. In order to perform the gluing procedure in Section \ref{sec005} the estimates obtained in the interior analysis have to be compatible with the one obtained in the exterior analysis. Also, we are dealing with a system and the approximate solution in the interior analysis is the Fowler-type solution $\mathcal U_{\varepsilon,R,a}=u_{\varepsilon,R,a}\Lambda$, which does not give us enough parameters to control the low frequency space in each equation of the system in the gluing procedure. It turns out that the parameter $R\in\mathbb R$ controls the constant functions space and $a\in\mathbb R
^n$ controls the coordinates functions space, but since the equation \eqref{S} has $d$ equations we will need distinct parameters to controls these spaces in each equation. To overcome this difficult we need to  use an auxiliary function $h$, see \eqref{eq031}.

\subsection{Interior Analysis}\label{sec002}

In this section we will use the hypothesis \eqref{H1} to find a family of solution to \eqref{S} on a punctured ball, with Dirichlet boundary data. First, we recall from \cite{jleli} and \cite{almir} that for any $\mu\leq 2$ and $r,\alpha\in(0,1)$, give a function $\phi\in C^{2,\alpha}(\mathbb S_r^{n-1})^\perp$ there exists a function $v_\phi\in C_\mu^{2,\alpha}(B_r(0)\backslash\{0\})^\perp$ such that
\begin{equation}\label{eq054}
    \left\{\begin{array}{lcl}
\Delta v_\phi=0 & \mbox{ in } & B_r(0)\backslash\{0\}\\
\pi_r''(v_\phi|_{\mathbb S_r^{n-1}})=\phi & \mbox{ on} & \partial B_r(0)
\end{array}\right.
\end{equation}
and
\begin{equation}\label{po}
\|v_\phi\|_{(2,\alpha),\mu,r}\leq Cr^{-\mu}\|\phi\|_{(2,\alpha),r}
\end{equation}
for some positive constant $C>0$ which does not depend on $r$.  This give us a well known operator called Poisson operator which is well understood and has useful properties that allows us to match the boundary Cauchy data, in order to successfully gluing the solutions that we will construct. See \cite{jleli}, \cite{MR1763040} and \cite{almir} for details.

Also,  we define a function $h$ as
\begin{equation}\label{eq031}
\begin{array}{rcl}
     h & = & \displaystyle\chi(x)\left(1+r^{20-n}|x|^4-\frac{4}{5}r^{19-n}|x|^5-r^{24-n}+\frac{4}{5}r^{24-n}\right)\times\\
     & &\times \left((\eta,0) +|x|(\langle  A_1,\theta\rangle,\ldots,\langle A_{d-1},\theta\rangle,0)\right)
\end{array}
\end{equation}
where $\chi$ is a smooth radial function equal to 1 in $B_{r}\backslash B_{r^{5}}$ and equal to zero in $B_{r^{10}}$, $\eta\in\mathbb R^{d-1}$ and $A_i\in\mathbb R^n$, for $i=1,\ldots,d-1$, are constants to be chosen later. Here $\theta=x/|x|\in\mathbb S^{n-1}$. Note that $h=O(|x|^4)$ and if $|x|=r$, we have $h(x)=\left(\eta,0)+r(\langle A_1,\theta\rangle,\ldots,\langle A_{d-1},\theta\rangle,0\right)$ and $\partial_rh(x)=(\langle A_1,\theta\rangle,\ldots,\langle A_{d-1},\theta\rangle,0)$.

Now, for each $\phi_{i} \in C^{2,\alpha}(\mathbb{S}^{n-1}_{r})^\perp$, $i=1,\ldots,d$, consider $v_{\phi_{i}}\in C^{2,\alpha}(B_{r}(0)\backslash\{0\})^\perp$ and $\mathcal{V}_{\phi} = (v_{\phi_{1}},\ldots,v_{\phi_{d}})$. The main goal in this section is to solve the system of PDEs 
\begin{equation}\label{SS}
H_{g_0}(\beta^{-1}(\mathcal{U}_{\varepsilon,R,a} +h+\mathcal V_\phi+ \mathcal{V})) = 0,
\end{equation}
in $B_{r}(0)\backslash \{0\}$, for some suitable parameters $0 < r \leq 1$, $\varepsilon > 0$, $R > 0$ and $a \in \R^{n}$, with $\mathcal{U}_{\varepsilon,R,a} +h+\mathcal V_{\phi}+ \mathcal{V} > 0$ and prescribed Dirichlet data, where $H_{g_0}$ is defined in Section \ref{sec001}. Here, $\beta$ is defined in the next paragraph.

To solve this problem we will use the fixed point method on Banach space. We will use the right inverse given by Corollary \ref{inver}.
However, since the right inverse is defined in the weighted H\"{o}lder spaces for some suitable weights, work with a general metric can hamper our calculations. To bypass this problem, instead of consider the metric $g_0$, we know that it is possible to find a positive smooth function $\beta \in C^{\infty}(M)$ such that the conformal metric $g = \beta^{-\frac{4}{n-2}}g_0$ has a good behaviour in the neighborhood of $p$. In fact, near the point $p$, in $g$-normal coordinates we have $\det g_{ij}=1+O(|x|^N)$, for $N$ big enough, $R_g=O(|x|^2)$ and $\beta=1+O(|x|^2)$ (see \cite[Theorem 5.1]{Lee_Parker}).  We will work in this coordinates, in the ball $B_{r_1}(p)$ with $0 < r_{1} \leq 1$ fixed. 

\subsubsection{Fixed point argument}

Due to the decay of the metric and of the scalar curvature, in this section we will restrict ourselves to work only in dimensions $3 \leq n \leq 5$. 


One easily checks that to find a solution to the system \eqref{SS}, using the conformal change \eqref{eq027}, it is equivalent to solve the following nonlinear PDE system
\begin{equation}\label{2.4}
\begin{array}{rcl}
\mathcal{L}_{\varepsilon, R, a}^{i}(\mathcal{V}) & = & \displaystyle (\Delta - \Delta_g)(u_{\varepsilon, R, a}\Lambda_i +h_i+ v_{\phi_i}+ v_i )+\Delta h_i\\
\\
& & + c_nR_g(u_{\varepsilon,R,a}\Lambda_i +h_i+v_{\phi_i}+ v_i ) -n(n-1)c_n u_{\varepsilon,R,a}^{\frac{4}{n-2}}(h_i+v_{\phi_i})\\
\\
& & \displaystyle +\beta^{-\frac{4}{n-2}}\sum_{j=1}^{2}\left(A_{ij}-c_nR_{g_0}\delta_{ij}\right)(u_{\varepsilon,R,a}\Lambda_{j}+h_i + v_{\phi_j}+ v_{j})\\
\\
& & - Q_{\varepsilon,R,a}^{i}(h+\mathcal{V}_\phi + \mathcal{V}) - nu_{\varepsilon,R,a}^{\frac{4}{n-2}}\langle h+\mathcal{V}_{\phi}, \Lambda \rangle\Lambda_i
\end{array}
\end{equation}
for $i=1,\ldots,d$, where $h=(h_1,\ldots,h_d)$, $\mathcal L^i_{\varepsilon,R,a}$ is defined in \eqref{eq002} and $Q_{\varepsilon,R,a}^{i}$ in \eqref{resto}. Here $c_n=\frac{n-2}{4(n-1)}$.

As previously mentioned, since our strategy is to reduce the problem of finding a solution of the system to a fixed point problem, we need to show that the right hand side of \eqref{2.4} belongs to $C^{0,\alpha}_{\mu-2;d}(B_r(0)\backslash\{0\})$.

\begin{remark}\label{rem001}
Throughout the rest of this work we will consider $d_3 =0$, $d_4=d_5=1$ and $r_\varepsilon=\varepsilon^s$ with $(d_n + 24/25)^{-1} < s < 4(d_n - 2 + 3n/2)^{-1}$. Also, we will consider $$R^{\frac{2-n}{2}}=2(1+b)\varepsilon^{-1}.$$
\end{remark}	

	These restrictions allow us to obtain good estimates for the approximate solution in a small ball near the singularity. They are importants to ensure the estimates in the next lemma.  First, we notice that we get existence of positive constants $C_{1}$ and $C_{2}$ such that 
\begin{equation}\label{2.6}
C_{1}\varepsilon |x|^{\frac{2-n}{2}} \leq u_{\varepsilon, R, a}(x) \leq C_{2}|x|^{\frac{2-n}{2}}
\end{equation}
for every $x \in B_{r_{\varepsilon}}(0)\backslash \{0\}$ and all $a\in\mathbb R^n$ with $|a|r_\varepsilon\leq 1/2$. Using this we obtain the next lemma. 
\begin{lemma}\label{re} Let $\mu \in (1, 2)$. There exists $\varepsilon_0 \in (0,1)$ such that for each $\varepsilon \in (0,\varepsilon_0)$, $a \in \mathbb R^{n}$ with $|a|r_{\varepsilon} \leq 1$ and for all $\mathcal{V}_{j} \in C^{2,\alpha}_{\mu;d}(B_{r_{\varepsilon}}(0)\backslash \{0\}), j=1,2$, with $\|\mathcal{V}_{j}\|_{(2,\alpha),\mu,r_{\varepsilon};d} \leq cr_{\varepsilon}^{\frac{49}{25}+d_n-\mu - \frac{n}{2}}$ we have that $Q^{i}_{\varepsilon,R,a}$ satisfies the following inequalities
\begin{equation}\label{eq025}
\begin{array}{l}
\|Q_{\varepsilon,R,a}^{i}(h+\mathcal{V}_1) - Q^{i}_{\varepsilon,R,a}(h+\mathcal{V}_2)\|_{(0,\alpha),\mu-2,r_{\varepsilon};d} \\
\leq Cr_{\varepsilon}^{\frac{n-2}{2}}\|\mathcal{V}_1 - \mathcal{V}_2\|_{(2,\alpha),\mu,r_{\varepsilon};d} \left(r_\varepsilon^\mu \|\mathcal{V}_1\|_{(2,\alpha),\mu,r_{\varepsilon};d} +r_\varepsilon^\mu \|\mathcal{V}_2\|_{(2,\alpha),\mu,r_{\varepsilon};d}+Cr^{4}\right)
\end{array}
\end{equation}
	and
\begin{equation}\label{eq026}
\| Q_{\varepsilon,R,a}(h)\|_{(0,\alpha),\mu-2,r_{\varepsilon};d}\leq Cr^{\frac{n+14}{2}-\mu}_\varepsilon,
\end{equation}
	where the constant $C> 0$ does not depend on $\varepsilon$, $R$ and $a$.
\end{lemma}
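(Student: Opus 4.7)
\textit{Proof plan.} The plan is to exploit that $Q^{i}_{\varepsilon,R,a}(\mathcal{V})$ is the second-order Taylor remainder of the map $\mathcal{V} \mapsto \tfrac{n(n-2)}{4}|\mathcal{U}_{0}+\mathcal{V}|^{4/(n-2)}(u_{0}\Lambda_{i} + v_{i})$ at $\mathcal{V} = 0$, with $\mathcal{U}_{0} = u_{\varepsilon,R,a}\Lambda$ (this is precisely what the integral representation in \eqref{resto} makes explicit). First I would establish the pointwise bounds
\begin{equation*}
|Q(\mathcal{V})| \leq C\bigl(u_{\varepsilon,R,a}^{(6-n)/(n-2)}|\mathcal{V}|^{2} + |\mathcal{V}|^{(n+2)/(n-2)}\bigr), \qquad |DQ(\mathcal{V})| \leq C\bigl(u_{\varepsilon,R,a}^{(6-n)/(n-2)}|\mathcal{V}| + |\mathcal{V}|^{4/(n-2)}\bigr),
\end{equation*}
by Taylor-expanding about $\mathcal{V} = 0$ in the regime $|\mathcal{V}| \leq u_{\varepsilon,R,a}/2$ (which is legitimate because $\tfrac{4}{n-2}\geq 1$ for $3\leq n\leq 5$, so the nonlinearity is at least $C^{1}$) and using the full-size bound in the complementary regime $|\mathcal{V}|\gtrsim u_{\varepsilon,R,a}$.

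For the Lipschitz estimate \eqref{eq025}, I would write
\begin{equation*}
Q^{i}_{\varepsilon,R,a}(h+\mathcal{V}_{1}) - Q^{i}_{\varepsilon,R,a}(h+\mathcal{V}_{2}) = (\mathcal{V}_{1}-\mathcal{V}_{2})\cdot\int_{0}^{1}DQ^{i}_{\varepsilon,R,a}\bigl(h+\mathcal{V}_{2}+s(\mathcal{V}_{1}-\mathcal{V}_{2})\bigr)\,ds,
\end{equation*}
and plug in the pointwise bound for $DQ$. From \eqref{2.6} one has $u_{\varepsilon,R,a}^{(6-n)/(n-2)}\leq C|x|^{(n-6)/2}$ on $B_{r_{\varepsilon}}\setminus\{0\}$, and the weighted estimate gives $|\mathcal{V}_{j}(x)|\leq \|\mathcal{V}_{j}\|_{(2,\alpha),\mu,r_{\varepsilon};d}|x|^{\mu}$. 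Thus the main contribution $u_{\varepsilon,R,a}^{(6-n)/(n-2)}|\mathcal{V}_{j}||\mathcal{V}_{1}-\mathcal{V}_{2}|$ is controlled pointwise by $C|x|^{(n-6)/2 + 2\mu}$ times the product of norms; multiplying by the weight factor $|x|^{2-\mu}$ and taking the sup over $|x|\leq r_{\varepsilon}$ produces the term $C r_{\varepsilon}^{(n-2)/2}(r_{\varepsilon}^{\mu}\|\mathcal{V}_{j}\|)\|\mathcal{V}_{1}-\mathcal{V}_{2}\|$ that appears in \eqref{eq025}. The $h$-contribution $u_{\varepsilon,R,a}^{(6-n)/(n-2)}|h||\mathcal{V}_{1}-\mathcal{V}_{2}|$ is treated in the same way using the explicit size $|h|=O(|x|^{4})$ coming from \eqref{eq031}, which yields the $r^{4}$ term. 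Finally, the higher-order piece $|\mathcal{V}|^{4/(n-2)}|\mathcal{V}_{1}-\mathcal{V}_{2}|$ is absorbed into the previous ones by the smallness assumption $\|\mathcal{V}_{j}\|\leq c r_{\varepsilon}^{49/25+d_{n}-\mu-n/2}$ together with the choice $r_{\varepsilon}=\varepsilon^{s}$ from Remark \ref{rem001}, which makes this correction strictly subleading in $r_{\varepsilon}$.

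The Hölder-seminorm estimates are obtained from these sup-norm analyses by the standard product and composition rules: $C^{0,\alpha}_{\kappa;d}$ is an algebra with respect to multiplication (with additive weights), and composition with the $C^{1,\alpha}$-function $t\mapsto t^{4/(n-2)}$ preserves the space on bounded sets. The Hölder seminorms of $u_{\varepsilon,R,a}$, $\mathcal{V}_{j}$ and $h$ are all controlled by the respective weighted norms, by \eqref{2.6} and by the polynomial form of \eqref{eq031}. For \eqref{eq026}, I would specialize $\mathcal{V}=h$ in the pointwise bound for $|Q|$, obtaining $|Q_{\varepsilon,R,a}(h)|\leq C|x|^{(n-6)/2}|h|^{2}\leq C|x|^{(n+10)/2}$ on $\mathrm{supp}\,h$, and multiplication by $|x|^{2-\mu}$ followed by the supremum over $|x|\leq r_{\varepsilon}$ yields the claimed $r_{\varepsilon}^{(n+14)/2-\mu}$ bound.

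The main obstacle is the careful bookkeeping of exponents: the weight $\mu$, the natural scaling of the Fowler solution from \eqref{2.6}, the polynomial structure of $h$ on its support, and the exponents produced by the Taylor remainder and its derivative must all combine so that the bounds are uniform in $\varepsilon$, $R$ and $a$. The numerology in Remark \ref{rem001} (the range of $s$ and the values of $d_{n}$) is precisely what is needed to guarantee that the cubic-type correction is dominated by the quadratic one across the whole ball $B_{r_{\varepsilon}}(0)\setminus\{0\}$; verifying this uniformity is the only delicate point in an otherwise standard Taylor-expansion argument.
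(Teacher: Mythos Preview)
Your approach is correct and essentially the same as the paper's. The paper writes the difference $Q^{i}(h+\mathcal V_1)-Q^{i}(h+\mathcal V_2)$ explicitly as a double integral via \eqref{resto}, then uses the smallness hypothesis together with the lower bound in \eqref{2.6} to show directly that $u_{\varepsilon,R,a}\Lambda_i+h_i+v_{ij}$ remains comparable to $|x|^{(2-n)/2}$ on $B_{r_\varepsilon}\setminus\{0\}$ (their \eqref{r001}), from which $|\mathcal U_{\varepsilon,R,a}+s\mathcal Z_t|^{(6-n)/(n-2)}\le c|x|^{(n-6)/2}$ follows; this replaces your regime split, since it shows the ``large $\mathcal V$'' regime never occurs. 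The remainder of your argument (the product/composition Hölder rules, the $|h|=O(|x|^4)$ contribution, and the exponent bookkeeping leading to $r_\varepsilon^{(n-2)/2}$ and $r_\varepsilon^{(n+14)/2-\mu}$) matches the paper's computation exactly.
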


\begin{proof}
	First, note that by \eqref{resto}  we can write 
$$\begin{array}{rl}
Q^{i}_{\varepsilon,R,a}(h+\mathcal{V}_1) - Q^{i}_{\varepsilon,R,a}(h+\mathcal{V}_{0}) =
\end{array}$$
$$\begin{array}{rl}
& =  \displaystyle n\int_{0}^{1} \int_{0}^{1} \left(|\mathcal U_{\varepsilon,R,a}+s\mathcal Z_t|^{\frac{12-4n}{n-2}}\langle \mathcal U_{\varepsilon,R,a}+s\mathcal Z_t,\mathcal Z_t\rangle\right.\\
\\
&   +|\mathcal U_{\varepsilon,R,a}+s\mathcal Z_t|^{\frac{8-2n}{n-2}}\left(\langle\mathcal Z_t,\mathcal V_1-\mathcal V_0 \rangle(u_{\varepsilon,R,a}\Lambda_i+sz_{it})\right.\\
\\
&  \left.\left.+\langle \mathcal U_{\varepsilon,R,a}+s\mathcal Z_t,\mathcal V_1-\mathcal V_0\rangle z_{it}\right)\right)dsdt,
\end{array}$$
where $\mathcal Z_{t} =  h+t\mathcal V_{1} + (1-t)\mathcal V_0=(z_{1t},\ldots,z_{dt})$. From this we obtain
$$\|Q^{i}_{\varepsilon,R,a}(h+\mathcal{V}_1) - Q^{i}_{\varepsilon,R,a}(h+ \mathcal{V}_{2})\|_{(0,\alpha),[\sigma, 2\sigma]}\leq $$
$$\begin{array}{rcl}
  & \leq  & C \left(\|\mathcal V_1\|_{(0,\alpha),[\alpha,2\alpha]}+\|\mathcal V_0\|_{(0,\alpha),[\alpha,2\alpha]}+\|h\|_{(0,\alpha),[\alpha,2\alpha]}\right)\times\\
\\
& &  \times\left\|\mathcal V_{1} - \mathcal V_{0}\right\|_{(0,\alpha),[\sigma, 2\sigma]}\max_{0\leq s,t\leq 1} \left\|\left|\mathcal{U}_{\varepsilon,R,a} + s\mathcal Z_t\right|^{\frac{6-n}{n-2}}\right\|_{(0,\alpha),[\sigma,2\sigma]}.
\end{array}$$	
Now, note that 
$	\|\mathcal{V}_{j}\|_{(2,\alpha),\mu,r_{\varepsilon};d} \leq cr_{\varepsilon}^{\frac{49}{25}+d_n-\mu - \frac{n}{2}}
$
	implies 
$|v_{ij}(x)| \leq cr_{\varepsilon}^{\frac{49}{25}+d_n-\frac{n}{2} }
$
	for all $x \in B_{r_{\varepsilon}}(0)\backslash \{0\}$. Using $\eqref{2.6}$, yields
	\begin{eqnarray*}
		u_{\varepsilon,R,a}\Lambda_{i}(x)  + v_{ij}(x) +h_i&\geq& C_{1}\Lambda_{i}\varepsilon|x|^{\frac{2-n}{2}} - cr_{\varepsilon}^{\frac{49}{25}+ d_n - \frac{n}{2} } -c|x|^4\\
		& = & \varepsilon|x|^{\frac{2-n}{2}}\left(C_1 \Lambda_i - c(|x|r_{\varepsilon}^{-1})^{\frac{n-2}{2}}\varepsilon^{s(d_n+24/25) - 1}\right.\\
		& &\left. -c\varepsilon^{-1}|x|^{\frac{n+6}{2}}\right)
	\end{eqnarray*}
	with $s(d_n+24/25) -1 >0$ and $\varepsilon^{-1}|x|^{\frac{n+6}{2}}\leq \varepsilon^\eta$ for some $\eta>0$, since $s > (d_n + 24/25)^{-1}$. Hence, it follows that 
	\begin{equation}\label{r001}
	C_{3}\varepsilon |x|^{\frac{2-n}{2}} \leq u_{\varepsilon,R,a}(x)\Lambda_{i} +h_i+  v_{ij}(x) \leq C_{4} |x|^{\frac{2-n}{2}},
	\end{equation}
	and consequently
	\begin{equation}\label{r002}
|(\mathcal{U}_{\varepsilon,R,a} +s  \mathcal{Z}_t)(x)|^{\frac{6-n}{n-2}}\leq c|x|^{\frac{n-6}{2}}, 
	\end{equation}
	for small enough $\varepsilon > 0$, since $|x|\leq r_{\varepsilon}$,
	for some positive constant $c$ independent of $\varepsilon$, $a$ and $R$. The estimate for the full H\"older norm is similar. Hence, we conclude that
	$$\max_{0\leq s,t\leq 1}\||\mathcal U_{\varepsilon,R,a}+s\mathcal Z_t|^{\frac{6-n}{n-2}}\|_{(0,\alpha),[\sigma,2\sigma]}\leq c\sigma^{\frac{n-6}{2}}.$$

	Therefore, it follows \eqref{eq025}. 	Now, note that
	$$\begin{array}{rl}
	Q^{i}_{\varepsilon,R,a}(h)  & =  \displaystyle n\int_{0}^{1} \int_{0}^{1} \left(|\mathcal U_{\varepsilon,R,a}+sth|^{\frac{12-4n}{n-2}}\langle \mathcal U_{\varepsilon,R,a}+sth,th\rangle\times\right.\\
	\\
	&  \times\langle \mathcal U_{\varepsilon,R,a}+sth,th\rangle(u_{\varepsilon,R,a}\Lambda_i+sth_i) \\
	\\
	&   +|\mathcal U_{\varepsilon,R,a}+sth|^{\frac{8-2n}{n-2}}\left(\langle th,h \rangle(u_{\varepsilon,R,a}\Lambda_i+sth_i)\right.\\
	\\
	&  \left.\left.+\langle \mathcal U_{\varepsilon,R,a}+sth,h\rangle th_i\right)\right)dsdt.
	\end{array}$$
	This implies that
	$$\begin{array}{rcl}
\|Q^{i}_{\varepsilon,R,a}(h)\|_{(0,\alpha),[\sigma,2\sigma]} & \leq & c_n\|h\|^2_{(0,\alpha),[\sigma,2\sigma]}\max_{0\leq s,t\leq 1}\||\mathcal U_{\varepsilon,R,a}+sth|^{\frac{6-n}{n-2}}\|_{(0,\alpha),[\sigma,2\sigma]}.
	\end{array}$$
	
Thus, using \eqref{r002} we obtain
		$$\begin{array}{rcl}
\sigma^{2-\mu}\|Q^{i}_{\varepsilon,R,a}(h)\|_{(0,\alpha),[\sigma,2\sigma]} & \leq & c_n\sigma^{\frac{n+14}{2}-\mu}.
	\end{array}$$
	Therefore, we obtain \eqref{eq026}.	\qedhere
	
\end{proof}



Now, consider the map
\begin{equation*}
\mathcal{N}_{\varepsilon}(R,a,\phi,h,\cdot): \mathcal{B}_{\varepsilon,\tau} \rightarrow C^{2,\alpha}_{\mu;d}(B_{r_{\varepsilon}}(0)\backslash \{0\})
\end{equation*}
defined by $\mathcal{N}_{\varepsilon}(R,a,\phi,h,v) = G_{\varepsilon,R,r_{\varepsilon},a}(f_{1},\ldots, f_{d})$, where $\tau>0$ is a constant, $\mathcal{B}_{\varepsilon,\tau}$ is the ball of radius $\tau r_{\varepsilon}^{\frac{49}{25}+d_n-\mu - \frac{n}{2}}$ in $C^{2,\alpha}_{\mu;d}(B_{r_{\varepsilon}}(0)\backslash \{0\})$  and $f_{i}$ is the right hand side of the system \eqref{2.4},
for suitable parameters $\varepsilon$, $R$, $a$ and $\phi=(\phi_1,\ldots,\phi_d)$.

\begin{lemma}\label{lem004} 
Let $\mu\in(1,3/2)$. Given a constant $\kappa>0$,  there exists a constant $\varepsilon_{0}>0$ such that, for each $\varepsilon \in (0,\varepsilon_{0})$  the map $\mathcal{N}_{\varepsilon}(R,a,\phi,h,\cdot)$ is well defined in $\mathcal{B}_{\varepsilon,\tau}$ for $\phi_i \in C_2^{2,\alpha}(\mathbb{S}^{n-1}_{r_{\varepsilon}})^\perp$, $i=1,\ldots,d$, with $\|\phi\|_{(2,\alpha),r_{\varepsilon};d} \leq kr_{\varepsilon}^{\frac{49}{25}+d_n-\frac{n}{2}}$.
\end{lemma}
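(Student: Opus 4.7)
The strategy is to reduce the statement to a term-by-term estimate of the right-hand side $f_i$ of \eqref{2.4} in the weighted norm $\|\cdot\|_{(0,\alpha),\mu-2,r_\varepsilon;d}$, and then to apply Corollary \ref{inver}, whose operator norm is uniform in $(\varepsilon,R,r_\varepsilon,a)$. If every term in $f_i$ is shown to be $O(r_\varepsilon^{\sigma})$ with $\sigma>\frac{49}{25}+d_n-\mu-\frac n2$, then
\[
\|\mathcal{N}_\varepsilon(R,a,\phi,h,v)\|_{(2,\alpha),\mu,r_\varepsilon;d}\le C\|f\|_{(0,\alpha),\mu-2,r_\varepsilon;d}\le \tau\, r_\varepsilon^{\frac{49}{25}+d_n-\mu-\frac n2}
\]
for $\varepsilon$ sufficiently small, proving that $\mathcal{N}_\varepsilon$ is a self-map of $\mathcal{B}_{\varepsilon,\tau}$ and is in particular well-defined there.

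I would then estimate the six groups of terms appearing on the right-hand side of \eqref{2.4} as follows. For the metric defect $(\Delta-\Delta_g)$ applied to $u_{\varepsilon,R,a}\Lambda_i+h_i+v_{\phi_i}+v_i$, the conformal gauge providing $\det g_{ij}=1+O(|x|^N)$ with $N$ arbitrarily large yields a factor $|x|^N$ in the operator; combined with the pointwise bound $u_{\varepsilon,R,a}\le C|x|^{(2-n)/2}$ from \eqref{2.6}, the Poisson estimate \eqref{po} for $v_{\phi_i}$, and the ball hypothesis on $v$, the contribution beats the threshold in dimensions $3\le n\le 5$. The term $\Delta h_i$ is computed directly from \eqref{eq031}; the coefficient $r_\varepsilon^{20-n}$ and the smooth cutoff $\chi$ render it negligible at the weighted level. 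The two terms involving $R_g=O(|x|^2)$ and $u_{\varepsilon,R,a}^{4/(n-2)}=O(|x|^{-2})$ give contributions of pointwise order $|x|^{(6-n)/2}$ and $|x|^{-2}(|h|+|v_\phi|)$, both of which have a strictly positive weighted exponent when $\mu\in(1,3/2)$. The potential term $\beta^{-4/(n-2)}\sum_j(A_{ij}-c_nR_{g_0}\delta_{ij})(\cdot)$ splits as $(A_n)_{ij}+O(|x|^2)$; by hypothesis \eqref{H1} one has $(A_n)_{ij}=O(|x|^{(n-3)/2})$, so multiplying by $u_{\varepsilon,R,a}\Lambda_j$ produces an $O(|x|^{-1/2})$ term whose weighted norm is $O(r_\varepsilon^{3/2-\mu})$, again above the target. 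Finally, the nonlinear remainder $Q^i_{\varepsilon,R,a}(h+\mathcal V_\phi+\mathcal V)$ is bounded by Lemma \ref{re}: applying \eqref{eq025} with $\mathcal V_2=0$ and combining with \eqref{eq026}, together with the ball bound on $\mathcal V$ and the Poisson bound on $\mathcal V_\phi$ via \eqref{po}, gives the correct order; the linear coupling $n u_{\varepsilon,R,a}^{4/(n-2)}\langle h+\mathcal V_\phi,\Lambda\rangle\Lambda_i$ is handled analogously using the pointwise bound on $u_{\varepsilon,R,a}^{4/(n-2)}$.

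The main obstacle in executing this plan is the delicate exponent bookkeeping. Each of the six contributions carries different powers of $r_\varepsilon$ (from $h$, $v_\phi$, $v$), different pointwise decays (from $u_{\varepsilon,R,a}$, $R_g$, $A_n$, $\beta$), and different algebraic coefficients (from the normalization $R^{(2-n)/2}=2(1+b)\varepsilon^{-1}$ and $r_\varepsilon=\varepsilon^s$ of Remark \ref{rem001}), and every one of them must clear the threshold $\frac{49}{25}+d_n-\mu-\frac n2$ uniformly. The choice $\mu\in(1,3/2)$ is calibrated so that both the linear coupling with $u_{\varepsilon,R,a}^{4/(n-2)}$ and the cubic tail of $Q^i_{\varepsilon,R,a}$ have enough margin in the weighted norm; the range $3\le n\le 5$ ensures that the curvature-type contributions carry no need for a Weyl-tensor vanishing assumption; and hypothesis \eqref{H1} is tuned precisely to compensate the $|x|^{(2-n)/2}$ blow-up of $u_{\varepsilon,R,a}$ in the $A$-term. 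Once all exponents are verified above the threshold, the conclusion follows by applying Corollary \ref{inver} and shrinking $\varepsilon_0$.
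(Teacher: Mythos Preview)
Your proposal is correct and takes essentially the same approach as the paper: both reduce to showing that the right-hand side of \eqref{2.4} lies in $C^{0,\alpha}_{\mu-2;d}(B_{r_\varepsilon}(0)\backslash\{0\})$, handle the potential term via hypothesis \eqref{H1} to get $O(|x|^{-1/2})=O(|x|^{\mu-2})$, and invoke Lemma \ref{re} for the nonlinear remainder, with the paper simply deferring your remaining term-by-term estimates to the analogous Lemma~3.3 in \cite{almir}. One minor point: the lemma as stated only asks that $\mathcal{N}_\varepsilon$ be \emph{defined} on $\mathcal{B}_{\varepsilon,\tau}$ (i.e., that $f$ land in the domain of $G_{\varepsilon,R,r_\varepsilon,a}$), not that it be a self-map---you are proving slightly more than required here, in effect anticipating part of Theorem \ref{teo002}.
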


\begin{proof}
We need only to show that the right hand side in \eqref{2.4} belongs to the domain of the right inverse $G_{\varepsilon,R,r_{\varepsilon},a}$ which is the space $C_{\mu-2;d}^{0,\alpha}(B_r(0)\backslash\{0\})$. Indeed, by \ref{H1} we get that
\begin{equation*}
\beta^{-\frac{4}{n-2}}\sum_{j=1}^{2}\left(A_{ij}- c_{n}R_{g_{0}}\delta_{ij}\right)(u_{\varepsilon,R,a}\Lambda_{j} + h+v_{\phi_j}+ v_{j})(x) = O(|x|^{-1/2})=O(|x|^{\mu-2}).
\end{equation*}
The estimates of the remain terms will follow using the Lemma \ref{re} and argument similar to the Lemma 3.3 in \cite{almir}. \qedhere
\end{proof}

Since the map $\mathcal{N}_{\varepsilon}(R,a,\phi,h,\cdot)$ is well-defined in $\mathcal{B}_{\varepsilon,\tau}$ we can reduce the problem of finding a solution to the system \eqref{2.4}, to the problem of finding a fixed point for $\mathcal N_{\varepsilon}(R,a,\phi,h,\cdot)$. This is the content of the next result

\begin{theorem}\label{teo002} Let $\mu \in (1,5/4)$, $k>0$ and $\tau > 0$.  There exists  $\varepsilon_{0}\in (0,1)$ such that for each $\varepsilon \in (0,\varepsilon_{0}]$, $|b|\leq 1/2$, $a \in \R^{n}$ with $|a|r_{\varepsilon}^{\frac{23}{24}} \leq 1$, and $\phi \in C_2^{2,\alpha}(\mathbb{S}^{n-1}_{r_{\varepsilon}}))^\perp$ with $\|\phi\|_{(2,\alpha),r_{\varepsilon}}\leq kr_{\varepsilon}^{\frac{49}{25}+d_n-\frac{n}{2}}$, there exists a solution $\mathcal{V}_{\varepsilon,R,a,\phi}$ of the problem
\begin{equation}\label{eq001}
\left\{
\begin{array}{lc}
H_{g_0}(\beta^{-1}(\mathcal{U}_{\varepsilon,R,a} +h+ \mathcal{V}_{\phi} + \mathcal{V}_{\varepsilon,R,a,\phi})) =0 \quad \mbox{in} \quad B_{r_{\varepsilon}}(0)\backslash \{0\}\\
\pi_{r_{\varepsilon}}''((\mathcal{V}_{\phi} + \mathcal{V}_{\varepsilon,R,a,\phi})|_{\partial B_{r_{\varepsilon}}(0)}) = \phi \quad \mbox{on} \quad \partial B_{r
_{\varepsilon}}(0).
\end{array}
\right.
\end{equation}
Moreover,
\begin{equation*}
\|\mathcal{V}_{\varepsilon,R,a,\phi}\|_{(2,\alpha),\mu,r_{\varepsilon};d} \leq \tau r_{\varepsilon}^{2+d_n-\mu-\frac{n}{2}}
\end{equation*}
and 
\begin{equation}\label{eq051}
	\|\mathcal{V}_{\varepsilon,R,a,\phi_{1}} - \mathcal{V}_{\varepsilon,R,a,\phi_{2}}\|_{(2,\alpha),\mu,r_{\varepsilon};d} \leq Cr_{\varepsilon}^{\delta-\mu}\|\phi_{1} - \phi_{2}\|_{(2,\alpha),r_{\varepsilon};d}
\end{equation}
for some constant $\delta >0$ which does not depend on $\varepsilon, R, a$, $h$ and $\phi_{j} = (\phi_{1j},\ldots,\phi_{dj})$, $j=1,2$.
\end{theorem}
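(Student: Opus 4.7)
The plan is to reformulate the Dirichlet problem \eqref{eq001} as a fixed point equation $\mathcal{V} = \mathcal{N}_\varepsilon(R,a,\phi,h,\mathcal{V})$ on the ball $\mathcal{B}_{\varepsilon,\tau}$ and then apply Banach's fixed point theorem. Using the conformal relation \eqref{eq027} together with the expansion \eqref{eq057}, the PDE system $H_{g_0}(\beta^{-1}(\mathcal{U}_{\varepsilon,R,a}+h+\mathcal{V}_\phi+\mathcal{V})) = 0$ is equivalent to requiring $\mathcal{L}_{\varepsilon,R,a}(\mathcal{V})$ to equal the right-hand side of \eqref{2.4}. Inverting this through $G_{\varepsilon,R,r_\varepsilon,a}$ from Corollary \ref{inver} yields exactly the map $\mathcal{N}_\varepsilon$ defined just before the statement. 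Lemma \ref{lem004} has already certified that $\mathcal{N}_\varepsilon$ is well defined on $\mathcal{B}_{\varepsilon,\tau}$, and the prescribed boundary datum $\pi''_{r_\varepsilon}((\mathcal{V}_\phi + \mathcal{V})|_{\partial B_{r_\varepsilon}(0)}) = \phi$ is automatic from the construction of $v_{\phi_i}$ in \eqref{eq054} together with the mapping property of the right inverse.

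Next I would show that $\mathcal{N}_\varepsilon$ sends $\mathcal{B}_{\varepsilon,\tau}$ into itself. Each term on the right-hand side of \eqref{2.4} is estimated separately in $C^{0,\alpha}_{\mu-2;d}(B_{r_\varepsilon}(0)\setminus\{0\})$: the difference $\Delta - \Delta_g$ is absorbed by the decay $\det g_{ij} = 1 + O(|x|^N)$ available in conformal normal coordinates for $g = \beta^{-4/(n-2)}g_0$; the scalar-curvature contribution uses $R_g = O(|x|^2)$; the potential term is controlled by hypothesis \eqref{H1} which, combined with the asymptotic \eqref{2.6} of $u_{\varepsilon,R,a}$, forces the weight range $\mu \in (1,5/4)$ together with the dimension restriction $3 \leq n \leq 5$; the explicit term $\Delta h_i$ is read off the definition \eqref{eq031} of $h$; and the quadratic remainder inputs are handled by inequality \eqref{eq026} of Lemma \ref{re} together with the Poisson bound \eqref{po} for $\mathcal{V}_\phi$. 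Once all contributions are collected, applying the uniformly bounded operator $G_{\varepsilon,R,r_\varepsilon,a}$ produces a function whose norm does not exceed $\tau r_\varepsilon^{49/25 + d_n - \mu - n/2}$, so $\mathcal{N}_\varepsilon$ maps $\mathcal{B}_{\varepsilon,\tau}$ to itself for $\varepsilon$ small.

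For the contraction step, the difference $\mathcal{N}_\varepsilon(\mathcal{V}_1) - \mathcal{N}_\varepsilon(\mathcal{V}_2)$ is the image under $G_{\varepsilon,R,r_\varepsilon,a}$ of the variation of the right-hand side of \eqref{2.4} in the $\mathcal{V}$-slot. The part that is linear in $\mathcal{V}$ cancels against $\mathcal{L}_{\varepsilon,R,a}$, leaving only the metric and potential perturbations acting on $\mathcal{V}_1 - \mathcal{V}_2$ (both producing a small prefactor from conformal normal coordinates or from \eqref{H1}) together with $Q^i_{\varepsilon,R,a}(h+\mathcal{V}_\phi+\mathcal{V}_1) - Q^i_{\varepsilon,R,a}(h+\mathcal{V}_\phi+\mathcal{V}_2)$, which is controlled by inequality \eqref{eq025}. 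For $\varepsilon$ small the resulting Lipschitz constant is at most $1/2$, and Banach's theorem produces the unique fixed point $\mathcal{V}_{\varepsilon,R,a,\phi}$. The sharper bound $\tau r_\varepsilon^{2 + d_n - \mu - n/2}$ stated in the theorem follows by re-examining the dominant term in the right-hand side of \eqref{2.4} once the fixed point is in hand. The Lipschitz estimate \eqref{eq051} in $\phi$ is obtained by subtracting the two fixed point identities, using \eqref{po} to bound $\|\mathcal{V}_{\phi_1} - \mathcal{V}_{\phi_2}\|$, and absorbing the small contraction factor on the left-hand side.

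The \textbf{main obstacle} will be balancing the many competing powers of $r_\varepsilon$ that enter the argument: the target size $r_\varepsilon^{2+d_n-\mu-n/2}$, the Poisson size $r_\varepsilon^{-\mu}\|\phi\|$, the remainder scale $r_\varepsilon^{(n+14)/2-\mu}$ from \eqref{eq026}, the scalar-curvature scale $r_\varepsilon^{2+\mu}$, and the potential scale coming from \eqref{H1}. Only within the narrow window $\mu \in (1,5/4)$ with $3 \leq n \leq 5$, and with the carefully engineered cutoff and polynomial weights built into \eqref{eq031}, do all these quantities fit compatibly into $C^{0,\alpha}_{\mu-2;d}$. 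In higher dimensions this is precisely where the Weyl-vanishing hypothesis of Theorem \ref{teo004} must intervene, which will be the content of Section \ref{sec011}.
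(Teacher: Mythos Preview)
Your proposal is correct and follows essentially the same route as the paper: reformulate \eqref{eq001} as the fixed point equation $\mathcal V=\mathcal N_\varepsilon(R,a,\phi,h,\mathcal V)$ via \eqref{eq027}, \eqref{eq057} and Corollary~\ref{inver}, then verify the two contraction-mapping hypotheses using Lemma~\ref{re}, \eqref{po}, \eqref{2.6} and \eqref{H1}, and finally deduce \eqref{eq051} by subtracting the fixed point identities. The only cosmetic difference is that the paper bounds $\|\mathcal N_\varepsilon(R,a,\phi,h,0)\|$ directly by $\tfrac12\tau r_\varepsilon^{2+d_n-\mu-n/2}$ (the sharper exponent) rather than first landing in $\mathcal B_{\varepsilon,\tau}$ with exponent $49/25$ and then upgrading, but this is a matter of presentation, not substance.
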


\begin{proof} 
In order to prove the existence of the solution to the problem we need to prove that the map $\mathcal{N}_{\varepsilon}(R,a,\phi,h,\cdot)$ has a fixed point. But, to do that it is enough to show that 
\begin{equation*}
\|\mathcal{N}_{\varepsilon}(R,a,\phi,h,0)\|_{(2,\alpha),\mu,r_{\varepsilon};d} < \frac{1}{2}\tau r_{\varepsilon}^{2+d_n-\mu-\frac{n}{2}}
\end{equation*}
and
$$
\|\mathcal{N}_{\varepsilon}(R,a,\phi,h,\mathcal{V}_{1}) - \mathcal{N}_{\varepsilon}(R,a,\phi,h,\mathcal{V}_{2})\|_{(2,\alpha),\mu,r_{\varepsilon};d} < \frac{1}{2}\|\mathcal{V}_{1} - \mathcal{V}_{2}\|_{(2,\alpha),\mu,r_{\varepsilon};d}
$$
for all $\mathcal{V}_{i} \in \mathcal{B}_{\varepsilon,\tau}$,  $i=1,2$.  

Using the Corollary \ref{inver}, Lemma \ref{re}, \eqref{po}, \eqref{2.6}, the hypothese \ref{H1} and a similiar argument as in Theorem 3.8 in \cite{almir} we get the result.

To show \eqref{eq051} we use the fact that the solution is a fixed point, the previous estimates and that 
$$\|\mathcal{V}_{\varepsilon,R,a,\phi_{1}} - \mathcal{V}_{\varepsilon,R,a,\phi_{2}}\| \leq 2\|\mathcal{N}_{\varepsilon}(R,a,\phi_1,h,\mathcal{V}_{\varepsilon,R,a,\phi_{2}}) -\mathcal{N}_{\varepsilon}(R,a,\phi_2,h,\mathcal{V}_{\varepsilon,R,a,\phi_{2}})\|. $$
\qedhere

\end{proof}

\subsection{Exterior Analysis}\label{sec003}

Let $(M,g_{0})$ be a nondegenerate compact Riemannian manifold with constant scalar curvature $R_{g_{0}} = n(n-1)$. Our main purpose in this section is to find a family of solutions to the system \eqref{S} in the complement of a ball centered at a fixed point $p\in M$, with suitable radius, in the manifold $M$. 

Since the difference between the potential $A$ and the scalar curvature is controlled only near the point $p$, this hypotheses will not help us to study the problem in the complement of a ball. Consequently, we will assume that $\Lambda = \left(\Lambda_1,\ldots,\Lambda_d\right)$ is a trivial solution of the system \eqref{S}, that is, $H_{g_{0}}(\Lambda) = 0$.

Let $r_{1} \in (0,1)$ and $\Psi : B_{r_{1}}(0) \rightarrow M$ be a normal coordinate system with respect to the metric $g = \beta^{\frac{4}{n-2}}g_{0}$ on $M$ centered in $p$ satisfying $\beta = 1 + O(|x|^{2})$ in $g$-normal coordinates. Define $G_{p} \in C^{\infty}(M\backslash \{p\})$ by $G_p\circ\Psi=\eta|x|^{2-n}$ in $B_{r_1}(p)$ and equal to zero in $M_{r_1}$, where $\eta$ is a smooth radial function equal to 1 in $B_{3r}(p)$, equal to zero in $\mathbb R^n\backslash B_{4r}(p)$ and with the estimates $|\nabla\eta(x)|\leq c|x|^{-1}$ and $|\nabla^2\eta(x)|\leq c|x|^{-2}$ for all $x\in B_{r_1}(p)$.

Our goal in this section is to solve the system 
\begin{equation}\label{S2}
H_{g_{0}}(\Lambda + \boldsymbol{\mathcal {G}}_{p,\rho} + \mathcal{U}) = 0  \quad \mbox{ in } \quad M_{r}=M\backslash B_{r}(p),
\end{equation}
for suitable parameter $r > 0$, where $\mathcal {G}_{p,\rho} = (\rho_1,\ldots,\rho_d)G_{p}$ and $\rho=(\rho_1,\ldots,\rho_d) \in \R^d$. Remember that $H_{g_0}$ is defined in Section \ref{sec001} and by assumption $\Lambda$ satisfies $H_{g_0}(\Lambda)=0$.

Following the strategy of the previous section, we will use the right inverse obtained in Section \ref{sec006} to reduce the problem of finding a solution to \eqref{S2}, to a fixed point problem. The function $\boldsymbol{\mathcal {G}}_{p,\rho}$ introduced in \eqref{S2} will be useful since the parameter $\rho$ will be necessary to match the Cauchy data in Section \ref{sec005}, besides this function is harmonic in $M_r$ and has an appropriate decay.


\subsubsection{Fixed point argument}\label{sec010}
 For a fixed $\varphi \in C^{2,\alpha}(\mathbb{S}^{n-1}_{r})$, we can consider the Poisson operator  $\mathcal{Q}_{r}(\varphi)$ associated with the laplacian in $\R^{n}\backslash B_{r}(0)$, which is the only solution of the problem
\begin{equation}\label{sol1}
\left\{
\begin{array}{lcl}
\Delta \mathcal{Q}_{r}(\varphi)= 0 &\mbox{ in } & \quad \R^{n}\backslash B_{r}(0) \\
Q_{r}(\phi) =  \varphi& \mbox{ on } & \quad \partial B_{r}(0),
\end{array}
\right.
\end{equation}
which satisfies 
\begin{equation*}
	\|Q_{r}(\varphi)\|_{C^{2,\alpha}_{1-n}(\R^{n}\backslash B_{r}(0))} \leq c r^{n-1}\|\varphi\|_{(2,\alpha),r}
\end{equation*}
where $c > 0$ is a constant that does not depend on $r$. For details, we refer \cite[Proposition 1.7.3]{almir}. It is well known that if $\varphi=\displaystyle\sum_{j=1}^\infty \varphi_j$, where $\varphi_j$ belongs to the eigenspace associated with the eigenvalue $i(i+n-2)$, then
$$\mathcal Q_r(\varphi)(x)=\sum_{j=1}^\infty r^{n+j-2}|x|^{2-n-j}\varphi_j.$$

For each $\varphi \in C^{2,\alpha}(\mathbb{S}^{n-1}_{r})$ which is $L^{2}$-orthogonal to the constant functions, let $u_{\varphi} \in C^{2,\alpha}_{\nu}(M_{r})$ such that $u_{\varphi} \equiv 0$ in $M_{r_{1}}$ and $u_{\varphi}\circ \Psi = \eta\mathcal{Q}_{r}(\varphi)$, where $\eta$ is a smooth, radial function equal to $1$ in $B_{r_{1}/2}(0)$ and vanishings $\R^{n}\backslash B_{r_{1}}(0)$, with $|x||\partial_{r}\eta(x)|\leq c$ and $|x|^2|\partial_{r}^{2}\eta(x)| \leq c$. Using the properties of the Poisson operator and of the cut function $\eta$ we can verify that 
\begin{equation}\label{eq052}
	\|u_{\varphi}\|_{C^{2,\alpha}_{\nu}(M_{r})} \leq cr^{-\mu}\|\varphi\|_{(2,\alpha),r}.
\end{equation}

Now using that $H_{g_0}(\Lambda)=0$ and the right inverse given by Proposition \ref{propo004}, we linearize the operator $H_{g_{0}}$ at $\Lambda$ and find that to solve \eqref{S2} with $\mathcal U$ replaced by $\mathcal{U}_{\varphi} + \mathcal{V}$, where $\mathcal{U}_{\varphi} = (u_{\varphi_{1}},\ldots, u_{\varphi_{d}})$, is equivalente to find a fixed point for the map  $\mathcal{M}_{r}(\rho,\varphi,\cdot): C^{2,\alpha}_{\nu;d}(M_{r})\rightarrow C^{2,\alpha}_{\nu;d}(M_{r})$ given by $\mathcal{M}_{r}(\rho,\varphi,\mathcal{V}) = - G_{r,g_{0}}(h_{1},\ldots,h_{d})$, where 
\begin{equation}\label{eq050}
h_{i} = Q^{i}(\boldsymbol{\mathcal G}_{p,\rho} + \mathcal{U}_{\varphi} + \mathcal{V}) +\mathcal L_{g_{0}}^{i}(\boldsymbol{\mathcal G}_{p,\rho}+\mathcal{U}_{\varphi})
\end{equation}
$i=1,\ldots,d$, for suitable parameters $\rho \in \R^d$ and $\varphi_{i} \in C^{2,\alpha}(\mathbb{S}^{n-1}_{r})$, where $\mathcal L_{g_0}^i$ is defined in \eqref{li} and  $Q^i$ is defined in \eqref{resto} with $\mathcal U_0$ replaced by $\Lambda$. This is the content of the next result, which the proof is similar to the Theorem \ref{teo002} and using that the operator $G_{r,g_0}$ is bounded by Proposition \ref{propo004}.

\begin{theorem}\label{teo001} Let $\nu \in (3/2-n, 2-n)$, $\gamma>0$ and $\zeta >0$ fixed constants. There exists $r_{2}>0$ such that if $r \in (0,r_{2})$, $\rho \in \R^d$ with $|\rho_i|^{2} \leq r^{d_n-\frac{51}{25}+\frac{3n}{2}}$, and for each $i=1,\ldots,d$ the function $\varphi_{i} \in C^{2,\alpha}(\mathbb{S}^{n-1}_{r})$ is $L^{2}$-orthogonal to the constant functions with $\|\varphi_{i}\|_{(2,\alpha),r}\leq \zeta r^{\frac{49}{25}+d_n-\frac{n}{2}}$, then there is a solution $\mathcal{V}_{\rho,\varphi}$ of the system 
\begin{equation*}
\left\{
\begin{array}{lc}
H_{g_{0}}^{i}(\Lambda+ \boldsymbol{\mathcal G}_{p,\rho} + \mathcal{U}_{\varphi} + \mathcal{V}_{\rho,\varphi}) = 0 \quad \mbox{in} \quad M_{r}\\
(\mathcal{U}_{\varphi} + \mathcal{V}_{\rho,\phi})\circ \Psi|_{\partial B_{r}(0)} - \varphi \in \R^{2} \quad \mbox{on} \quad \partial M_{r}.
\end{array}
\right.
\end{equation*}	

Moreover, 
\begin{equation*}
	\|\mathcal{V}_{\rho,\varphi}\|_{C^{2,\alpha}_{\nu;d}(M_{r})} \leq \gamma r^{2+d_n-\nu-\frac{n}{2}}
\end{equation*}
and 
\begin{equation*}
	\|\mathcal{V}_{\rho,\varphi_{1}} - \mathcal{V}_{\rho,\varphi_{2}}\|_{C^{2,\alpha}_{\nu;d}(M_{r})} \leq Cr_{\varepsilon}^{\delta-\mu}\|\varphi_{1} - \varphi_{2}\|_{(2,\alpha),r;d}
\end{equation*}
for some constant $\delta >0$ small enough independent of $r$, where $\varphi_{j} = (\varphi_{1j},\ldots,\varphi_{2j})$ for $j=1,2$.
\end{theorem}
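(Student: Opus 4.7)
The plan is to mirror the interior analysis carried out in Theorem \ref{teo002}: I apply Banach's fixed point theorem to the map $\mathcal{M}_{r}(\rho,\varphi,\cdot)$ on the closed ball
$$\mathcal{B}_{r,\gamma} = \{\mathcal{V} \in C^{2,\alpha}_{\nu;d}(M_{r}) : \|\mathcal{V}\|_{C^{2,\alpha}_{\nu;d}(M_{r})} \leq \gamma r^{2+d_n-\nu-\frac{n}{2}}\}.$$
The essential ingredients are the bounded right inverse $G_{r,g_{0}}$ from Proposition \ref{propo004} (whose operator norm does not depend on $r$), the Poisson extension estimate \eqref{eq052}, and weighted H\"older estimates for the two pieces of $h_{i}$ appearing in \eqref{eq050}.

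To show that $\mathcal{M}_{r}(\rho,\varphi,\cdot)$ sends $\mathcal{B}_{r,\gamma}$ into itself, I first estimate $\|\mathcal{M}_{r}(\rho,\varphi,0)\|_{C^{2,\alpha}_{\nu;d}(M_{r})}$, which splits into two contributions. For the linear piece $\mathcal{L}_{g_{0}}^{i}(\boldsymbol{\mathcal{G}}_{p,\rho}+\mathcal{U}_{\varphi})$: since $|x|^{2-n}$ is harmonic on $\mathbb{R}^{n}\setminus\{0\}$ and the cutoff $\eta$ is locally constant outside an annulus of inner radius $3r$, the term $\Delta_{g_{0}} G_{p}$ is supported on $\{3r\leq|x|\leq 4r\}$ and contributes at most $O(|\rho|r^{-n})$ there; the remaining lower-order pieces $-\sum_{j}A_{ij}(\boldsymbol{\mathcal{G}}_{p,\rho})_{j}+n\langle\Lambda,\boldsymbol{\mathcal{G}}_{p,\rho}\rangle\Lambda_{i}+\tfrac{n(n-2)}{4}(\boldsymbol{\mathcal{G}}_{p,\rho})_{i}$ behave like $|\rho|\,|x|^{2-n}$ and are controlled by the weighted norm, while the Poisson extension $\mathcal{U}_{\varphi}$ is handled through \eqref{eq052}. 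The smallness $|\rho|^{2}\leq r^{d_n-\frac{51}{25}+\frac{3n}{2}}$ and $\|\varphi\|_{(2,\alpha),r;d}\leq\zeta r^{\frac{49}{25}+d_n-\frac{n}{2}}$ are precisely calibrated so these contributions fit the target radius $\gamma r^{2+d_n-\nu-\frac{n}{2}}$. For the quadratic remainder $Q^{i}(\boldsymbol{\mathcal{G}}_{p,\rho}+\mathcal{U}_{\varphi}+\mathcal{V})$, a Taylor expansion of $H_{g_{0}}$ around the constant $\Lambda$ gives a bilinear bound in the arguments, so the evaluation at $\mathcal{V}=0$ is quadratic in the small quantities $\boldsymbol{\mathcal{G}}_{p,\rho}+\mathcal{U}_{\varphi}$ and is therefore of strictly higher order.

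For the contraction property I use
$$\mathcal{M}_{r}(\rho,\varphi,\mathcal{V}_{1}) - \mathcal{M}_{r}(\rho,\varphi,\mathcal{V}_{2}) = -G_{r,g_{0}}\bigl(Q(\boldsymbol{\mathcal{G}}_{p,\rho}+\mathcal{U}_{\varphi}+\mathcal{V}_{1}) - Q(\boldsymbol{\mathcal{G}}_{p,\rho}+\mathcal{U}_{\varphi}+\mathcal{V}_{2})\bigr),$$
and exploit the quadratic nature of $Q$ to bound the right-hand side by $\|\mathcal{V}_{1}-\mathcal{V}_{2}\|_{C^{2,\alpha}_{\nu;d}}$ times a prefactor that is $O(r^{\delta})$ for some $\delta>0$, uniformly in $\rho$ and $\varphi$ (as in the proof of Lemma \ref{re}). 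Taking $r$ small enough gives a contraction constant strictly below $1/2$, and Banach's theorem produces the unique fixed point $\mathcal{V}_{\rho,\varphi}\in\mathcal{B}_{r,\gamma}$. The boundary condition $(\mathcal{U}_{\varphi}+\mathcal{V}_{\rho,\varphi})\circ\Psi|_{\partial B_{r}(0)}-\varphi\in\mathbb{R}^{d}$ is automatic: by construction $\mathcal{U}_{\varphi}$ is the Poisson extension of $\varphi$, and by Proposition \ref{propo004} every function in the range of $G_{r,g_{0}}$ is constant on $\partial B_{r}(p)$.

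Finally, the Lipschitz estimate follows by writing $\mathcal{V}_{\rho,\varphi_{i}}=\mathcal{M}_{r}(\rho,\varphi_{i},\mathcal{V}_{\rho,\varphi_{i}})$, $i=1,2$, and combining the contraction bound with the fact that $\mathcal{U}_{\varphi_{1}}-\mathcal{U}_{\varphi_{2}}$ is linearly controlled by $\|\varphi_{1}-\varphi_{2}\|$ through \eqref{eq052}. The main obstacle I anticipate is the sharp weighted bookkeeping near the inner sphere $|x|=r$, where $\boldsymbol{\mathcal{G}}_{p,\rho}$ attains its largest size $|\rho|r^{2-n}$ and the weighted norms become tight; the exponents chosen in the smallness hypotheses on $\rho$ and $\varphi$ are precisely what is needed for the resulting decay to be compatible with the Fowler-type Cauchy data that must be matched in the gluing step of Section \ref{sec005}.
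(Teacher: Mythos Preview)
Your proposal is correct and follows essentially the same approach as the paper: the paper's proof is a one-sentence reduction to the argument of Theorem \ref{teo002}, invoking precisely the ingredients you name (the bounded right inverse $G_{r,g_0}$ from Proposition \ref{propo004}, the Poisson extension estimate \eqref{eq052}, the decomposition \eqref{eq050}, and the fact that $\boldsymbol{\mathcal G}_{p,\rho}$ and $\mathcal U_\varphi$ are supported in $B_{r_1}(p)$). Your outline supplies the details that the paper leaves to the reader.
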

\begin{proof}
The proof is similar to that of Theorem \ref{teo002}, which, besides the hypotheses,  we use Proposition \ref{propo004}, \eqref{eq052}, \eqref{eq050} and the fact that the supports of $\mathcal G_{p,\rho}$ and $\mathcal U_\varphi$ belong to the ball $B_{r_1}(p)$.
\end{proof}

\section{Cauchy data matching: Gluing method}\label{sec005}

In this section we will proof the existence of singular solution to the system \eqref{S}. Although the computations above are rather technical, the main idea is simple and it consists in finding the appropriate parameters in a way that the solutions constructed in the interior and exterior analysis coincide on the boundary up to order one. Thus, using elliptic regularity we get a smooth solution. In order to find the right parameters, we will again use a fixed point argument.  More precisely, we will prove the following theorem
\begin{theorem}[Theorem \ref{teo004}]\label{teo003}
Let $(M,g_0)$ be a closed Riemannian manifold with dimension \textcolor{red}{$3\leq n\leq 5$} and constant scalar curvature $n(n - 1)$.  Assume that the metric is nondegenerate at some $\Lambda\in\mathbb S^{d-1}_+$ (see Definition \ref{def001}). Suppose that the potential $A$ satisfies the hypotheses \eqref{H1}. Then there exists a constant $\varepsilon_0>0$ and a one-parameter family of positive smooth functions $\mathcal V_\varepsilon=(v_{1,\varepsilon},\ldots,v_{d,\varepsilon})$ on $M\backslash\{p\}$ defined for $\varepsilon\in(0,\varepsilon_0)$ such that 
\begin{enumerate}
    \item \label{cond001} each $\mathcal V_\varepsilon$ is a smooth solution to the system
  $$\Delta_{g}v_{i,\varepsilon}- \sum_{j=1}^{d}A_{ij}(x)v_{j,\varepsilon} + \frac{n(n-2)}{4}|\mathcal{V}_{\varepsilon}|^{\frac{4}{n-2}}v_{i,\varepsilon} = 0, \;\;\; \mbox{ in }M\backslash\{p\},$$
  for all $i=1,\ldots,d,$;
    \item \label{cond002} near the singularity $p$, $\mathcal V_\varepsilon$ is asymptotically to some Fowler-type solution $\mathcal U_{\varepsilon,R,a}=u_{\varepsilon,R,a}\Lambda $; and
\item\label{cond003} $\mathcal V_\varepsilon\rightarrow\Lambda$ as $\varepsilon\rightarrow 0$.
\end{enumerate}
\end{theorem}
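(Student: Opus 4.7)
The plan is to construct $\mathcal V_\varepsilon$ by pasting the interior solution of Theorem~\ref{teo002} on $B_{r_\varepsilon}(p)\setminus\{p\}$ (after multiplying by the conformal factor $\beta^{-1}$) with the exterior solution of Theorem~\ref{teo001} on $M_{r_\varepsilon}$. For the paste to be a genuine solution, the two pieces must agree to first order across the interface $\partial B_{r_\varepsilon}(p)$; once the full Cauchy data match, each piece satisfies the same quasilinear elliptic system on its side, so standard interior regularity will upgrade the $C^{1,\alpha}$ matched function to a smooth solution on $M\setminus\{p\}$. The free parameters at my disposal are $(R,a,\eta,A_1,\ldots,A_{d-1},\phi)$ on the interior and $(\rho,\varphi)$ on the exterior, and the goal is to tune them so that both the Dirichlet trace and the normal-derivative trace of the two solutions coincide on $\partial B_{r_\varepsilon}(p)$.

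I decompose the matching system via the spherical harmonic decomposition on $\mathbb S^{n-1}_{r_\varepsilon}$ into three blocks: the constant mode $\phi_0$, the coordinate modes $\phi_1,\ldots,\phi_n$, and the high-frequency tail $j\geq n+1$. The high-frequency block is absorbed by $(\phi,\varphi)$, which are precisely the Dirichlet data of the Poisson extensions $\mathcal V_\phi$ and $\mathcal U_\varphi$. Using the Poisson estimates \eqref{po} and \eqref{eq052} together with the Lipschitz bound \eqref{eq051} and its exterior analogue from Theorem~\ref{teo001}, the projection of the Cauchy matching onto the high-frequency block becomes a contraction on a product ball of radius $\kappa\, r_\varepsilon^{49/25+d_n-n/2}$ in $C^{2,\alpha}(\mathbb S^{n-1}_{r_\varepsilon})^\perp\times C^{2,\alpha}(\mathbb S^{n-1}_{r_\varepsilon})^\perp$ for any admissible choice of the low-frequency parameters.

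The low-frequency block is where the auxiliary function $h$ is decisive, because the Fowler-type approximation $u_{\varepsilon,R,a}\Lambda$ only provides one scalar parameter in the $\Lambda$ direction (namely $b$, through $R^{(2-n)/2}=2(1+b)\varepsilon^{-1}$) and $n$ scalar parameters in the coordinate directions (namely $a$), while the system has $d$ coupled equations that each need their own constant and coordinate matching. By Proposition~\ref{propo002}, at $|x|=r_\varepsilon$ the interior approximate solution equals
\begin{equation*}
\bigl((1+b)+\mathrm{O}(r_\varepsilon^{n-2})\bigr)\Lambda+(\eta,0)+r_\varepsilon\bigl(\langle A_1,\theta\rangle,\ldots,\langle A_{d-1},\theta\rangle,0\bigr)+\text{high freq},
\end{equation*}
with the analogous expansion for its radial derivative, while on the exterior $\Lambda+\boldsymbol{\mathcal G}_{p,\rho}$ contributes $\Lambda+\rho\, r_\varepsilon^{2-n}$ to the constant mode. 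Projecting the two matching conditions onto $\phi_0$ then produces a $2d\times 2d$ linear system in $(b,\eta,\rho)$ whose leading-order matrix splits into $d$ invertible $2\times 2$ blocks, and projecting onto the coordinate modes produces an analogous uniformly invertible $2dn\times 2dn$ system for $(a,A_1,\ldots,A_{d-1})$. Combining the low-frequency inversion with the high-frequency contraction, the full Cauchy matching is recast as a fixed-point problem on the product parameter space, and a Banach contraction argument produces the desired parameters.

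The main obstacle is the exponent bookkeeping. Applying the inverse of the low-frequency linearization amplifies errors by a factor on the order of $r_\varepsilon^{(2-n)/2}$, and every source of error — the interior remainder $\mathrm{O}(r_\varepsilon^{2+d_n-\mu-n/2})$ from Theorem~\ref{teo002}, the exterior remainder $\mathrm{O}(r_\varepsilon^{2+d_n-\nu-n/2})$ from Theorem~\ref{teo001}, the potential contribution controlled by \eqref{H1}, the conformal-factor error $\beta=1+\mathrm{O}(|x|^2)$, and the nonlinear remainder $Q$ from \eqref{resto} — must still be of the form $r_\varepsilon^\delta$ with $\delta>0$ after this amplification; verifying that the choice $\mu\in(1,5/4)$, $\nu\in(3/2-n,2-n)$ together with $r_\varepsilon=\varepsilon^s$ for $s$ in the range specified in Remark~\ref{rem001} makes all these inequalities close is exactly what forces the restriction $3\leq n\leq 5$ in this section. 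Once the contraction closes, elliptic regularity produces the desired smooth singular solution; positivity, the prescribed Fowler-type asymptotic profile near $p$, and the convergence $\mathcal V_\varepsilon\to\Lambda$ on compact subsets of $M\setminus\{p\}$ all follow from the smallness of both remainders in their weighted norms as $\varepsilon\to 0$.
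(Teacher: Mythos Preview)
Your proposal captures the paper's gluing strategy faithfully: Theorems~\ref{teo002} and~\ref{teo001} supply the interior and exterior pieces, the Cauchy matching on $\partial B_{r_\varepsilon}(p)$ is decomposed via spherical harmonics into constant, coordinate, and high-frequency blocks, the auxiliary function $h$ furnishes the extra low-frequency parameters needed by the $d$ coupled equations, and elliptic regularity upgrades the $C^1$ match to a smooth solution.

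There is one parameter-counting gap in the coordinate-mode block. You list only $(a,A_1,\ldots,A_{d-1})\in\mathbb R^{dn}$ as unknowns for what you correctly identify as a $2dn$-dimensional system, which would leave it overdetermined. The missing $dn$ unknowns are the coordinate components $\omega=(\omega_{ij})$ of the exterior boundary datum $\varphi$: in Theorem~\ref{teo001} the data $\varphi_i$ are only required to be $L^2$-orthogonal to the constants, so one writes $\varphi=\omega+\vartheta$ with $\omega$ spanned by $\phi_1,\ldots,\phi_n$ and $\vartheta$ in the high-frequency space. The paper then solves the high-frequency block for $\vartheta$ alone (Lemma~\ref{lem001}), the constant block for $(b,\rho)$ (Lemma~\ref{lem002}), and the coordinate block for $(\alpha,\omega)$ jointly (Lemma~\ref{lem003}). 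Once $\omega$ is included among your coordinate-mode unknowns, your simultaneous-contraction description is equivalent to the paper's nested fixed points.
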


By Theorem \ref{teo002}, for sufficiently small $\varepsilon>0$, there is a family of positive functions, given by $\mathcal A_\varepsilon(R,a,\phi,h)=(\mathcal  A_\varepsilon(R,a,\phi,h)_1,\ldots,\mathcal  A_\varepsilon(R,a,\phi,h)_d),$ where 
$$\mathcal A_\varepsilon(R,a,\phi,h)_i= u_{\varepsilon,R,a}\Lambda_i+h_i+v_{\phi_i}+\mathcal U_{\varepsilon,R,a,\phi,i}$$
such that 
$$\left\{
\begin{array}{lc}
H_{g_0}(\beta^{-1}\mathcal  A_\varepsilon(R,a,\phi,h)) =0 \quad \mbox{in} \quad B_{r_{\varepsilon}}(p)\backslash \{p\}\\
\pi_{r_{\varepsilon}}''(A_\varepsilon(R,a,\phi,h)) = \phi \quad \mbox{on} \quad \partial B_{r
	_{\varepsilon}}(p).
\end{array}
\right.$$
Here the function $h=(h_1,\ldots,h_d)$ is defined in \eqref{eq031}.

Also, by Theorem \ref{teo001}, for sufficiently small $\varepsilon>0$, there is a family of positive functions $\mathcal B_\varepsilon(\rho,\varphi)=(\mathcal B_\varepsilon(\rho,\varphi)_1,\ldots,\mathcal B_\varepsilon(\rho,\varphi)_d)$, given by
$$\mathcal B_\varepsilon(\rho,\varphi)_i=\Lambda_i+ \rho_iG_p + u_{\varphi_i} + \mathcal{V}_{\rho,\varphi,i}$$
such that
$$\left\{
\begin{array}{rll}
H_{g_{0}}(\mathcal B_\varepsilon(\rho,\varphi)) = 0 & \quad \mbox{in} & \quad M_{r}\\
\mathcal B_\varepsilon(\rho,\varphi)\circ \Psi|_{\partial B_{r}(0)} - \varphi \in \R^{2} & \quad \mbox{on} & \quad \partial M_{r}.
\end{array}
\right.
$$
Remember that in $\partial M_{r_\varepsilon}$ we have $G_p=|x|^{2-n}$.


Our main purpose is to show the existence of parameters $(\rho,R)\in\mathbb R^d\times \mathbb R_+$, $a\in\mathbb R^n$, a function $h$ such as in \eqref{eq031} and $\phi,\varphi\in C^{2,\alpha}(\mathbb S_{r_\varepsilon}^{n-1})^{d}$ such that 
\begin{equation}\label{eq009}
\left\{\begin{array}{lcl}
\mathcal A_\varepsilon(R,a,\phi,h) & = & \beta\mathcal B_{r_\varepsilon}(\rho,\varphi)\\
\partial_r( \mathcal A_\varepsilon(R,a,\phi,h)) & = & \partial_r(\beta\mathcal B_{r_\varepsilon}(\rho,\varphi))
\end{array}\right.,
\end{equation}
on $\partial B_{r_\varepsilon}(p)$, where $\beta=1+f$ with $f=O(|x|^2)$. 

Since we can take the function $\phi$ only in the high frequency space, see \eqref{eq054}, and the inverse map that we obtained in Corollary \ref{inver} gives us a function whose components in the high frequency spaces vanish, in order to solve this system we need to project $\eqref{eq009}$ separately in the low and high frequencies spaces. The function $h$ introduced in \eqref{eq031} will be important to solve $\eqref{eq009}$ and one of the main differences between solving the system \eqref{S} and its scalar case. 

Let $\omega_i,\vartheta_i\in C^{2,\alpha}(\mathbb S_{r_\varepsilon}^{n-1})$ such that
\[\|\omega_{i}\|_{(2,\alpha), r_{\varepsilon}}, \|\vartheta_{i}\|_{(2,\alpha), r_{\varepsilon}} \leq r_\varepsilon^{\frac{49}{25}+d_n-\frac{n}{2}}, \qquad i=1,\ldots, d \] 
where $d_{n}$ is given by Remark \ref{rem001}, $\omega_i$ belongs to the space spanned by the coordinate functions and $\vartheta_i$ is in the high frequency space. By Theorem \ref{teo001} we see that $\mathcal B_{\varepsilon}(\rho,\omega+\vartheta)$ is well defined, where we take $\zeta=2$. Here $\omega=(\omega_1,\ldots,\omega_d)$ and $\vartheta:=(\vartheta_1,\ldots,\vartheta_d)$. Now, let
$$\begin{array}{rcl}
     \phi_\vartheta^i & = & \pi_{r_\varepsilon}''(\left.(\beta\mathcal B^i_{r_\varepsilon}(\rho,\omega+\vartheta)-u_{\varepsilon,R,a}\Lambda_i)\right|_{\mathbb S_{r_\varepsilon}^{n-1}})\\
     & = & \pi_{r_\varepsilon}''\left(\left.(\Lambda_i  f+\rho_i f G_p+ f u_{\omega_i+\vartheta_i}+ f\mathcal V_{\rho,\omega+\vartheta,i}-u_{\varepsilon,R,a}\Lambda_i)\right|_{\mathbb S_{r_\varepsilon}^{n-1}}\right)+\vartheta_i.
\end{array}$$
By the estimates that we obtained in the Sections \ref{sec002} and \ref{sec003}, we get
$$\|\phi^i_\vartheta\|_{(2,\alpha),r_\varepsilon;d}\leq Cr_\varepsilon^{\frac{49}{25}+d_n-\frac{n}{2}},$$
for some positive constant $C$ that does not depend on $\varepsilon$. Thus, by Theorem \ref{teo002}, we get that $\mathcal A_\varepsilon(R,a,\phi_\vartheta,h)$ is well defined, which implies that
$$\pi_{r_\varepsilon}''(\mathcal A_{r_\varepsilon}(R,a,\phi_\vartheta,h))=\pi_{r_\varepsilon}''(\beta\mathcal  B_\varepsilon(\rho,\omega+\vartheta)).$$

By projecting the second equation of \eqref{eq009} in the high frequency space, we get 
\[r_\varepsilon\partial_r(v_{\vartheta_i}-u_{\vartheta_i})+\mathcal S^i_\varepsilon(a,b,\rho,\omega,\vartheta)=0,\]
on $\partial_rB_{r_\varepsilon}(0)$, where
$$\begin{array}{rcl}
     S^i_\varepsilon(a,b,\rho,\omega,\vartheta) & = & r_\varepsilon\partial_rv_{\phi_{\vartheta_i}-\vartheta_i}+r_\varepsilon\partial_r\pi''_{r_\varepsilon}(u_{\varepsilon,R,a})\Lambda_{i}-r_\varepsilon\partial_r\pi''_{r_\varepsilon}(\beta\mathcal V_{\rho,\omega+\vartheta,i})\\
     \\
     & & +r_\varepsilon\partial_r\pi''_{r_\varepsilon}(\mathcal U_{\varepsilon,R,a,\phi_\vartheta,i}- f-\rho_i \beta G_p- fu_{\omega_i+\vartheta_i}).
\end{array}$$

Now, we note that the map $\mathcal P:\pi''(C^{2,\alpha}(\mathbb S^{n-1}))\rightarrow\pi''(C^{1,\alpha}(\mathbb S^{n-1}))$ defined as 
$$\mathcal P(\phi):=r_\varepsilon\partial_r(v_{\phi_{r_\varepsilon}}-u_{\phi_{r_\varepsilon}})(r_\varepsilon\cdot)$$
is an isomorphism, where $\phi_{r_\varepsilon}(x)=\phi(r_\varepsilon^{-1} x)$ (See \cite{jleli,MR1763040}). Remember that $v_\phi$ is given by \eqref{eq054} and $u_\phi$ is defined in the Section \ref{sec010}.

Consider the map $\mathcal H_\varepsilon(a,b,\rho,\omega,\cdot):\mathcal D_\varepsilon\rightarrow\pi''(C^{2,\alpha}(\mathbb S^{n-1}))^d$
given by
$$\mathcal H_\varepsilon(a,b,\rho,\omega,\vartheta)=-\mathcal Z^{-1}(\mathcal S_\varepsilon(a,b,\rho,\omega,\vartheta_{r_\varepsilon})(r_\varepsilon\cdot)),$$
where $\mathcal Z:\mathcal \pi''(C^{2,\alpha}(\mathbb S^{n-1}))^d\rightarrow\pi''(C^{1,\alpha}(\mathbb S^{n-1}))^d$ is the isomorphism given by
$\mathcal Z(\phi_1,\ldots,\phi_d)=(\mathcal P(\phi_1),\ldots,\mathcal P(\phi_d))$, and
$$\mathcal D_\varepsilon:=\left\{(\vartheta_1,\ldots,\vartheta_d)\in  \pi''(C^{2,\alpha}(\mathbb S^{n-1}))^d; \|\vartheta_i\|\leq r_\varepsilon^{\frac{49}{25}+d_n-\frac{n}{2}}\right\}.$$

By the estimates obtained in Theorems  \ref{teo002} and \ref{teo001}, we get that $\mathcal S_\varepsilon(a,b,\rho,\omega,\vartheta_{r_\varepsilon})=O(r_\varepsilon^{2+d_n-\frac{n}{2}})$ and therefore we obtain the following lemma.

\begin{lemma}\label{lem001}
There exists a constant $\varepsilon_0>0$ such that if $\varepsilon\in(0,\varepsilon_0)$, $a\in\mathbb R^n$ with $|a|^2\leq r_\varepsilon^{d_n-\frac{n}{2}}$, $(b,\rho)\in\mathbb R\times\mathbb R^d$ with $|b|\leq 1/2$ and $|\rho_i|^2\leq r_\varepsilon^{d_n-2+\frac{3n}{2}}$, and $\omega_i\in C^{2,\alpha}(\mathbb S_{r_\varepsilon}^{n-1})$, for $i=1,\ldots,d$, belongs to the space spanned by the coordinate functions and with norm bounded by $r_\varepsilon^{\frac{49}{25}+d_n-\frac{n}{2}}$, then the map $\mathcal H_\varepsilon(a,b,\rho,\omega,\cdot)$ has a fixed point in $\mathcal D_\varepsilon$.
\end{lemma}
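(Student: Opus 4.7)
\medskip

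The plan is to apply the Banach fixed point theorem to the map $\mathcal H_\varepsilon(a,b,\rho,\omega,\cdot)$ on the closed ball $\mathcal D_\varepsilon$ in $\pi''(C^{2,\alpha}(\mathbb S^{n-1}))^d$. Two things must be checked: first, that $\mathcal H_\varepsilon$ sends $\mathcal D_\varepsilon$ into itself; second, that it is a strict contraction there. Both will follow from the bookkeeping already set up, once one expands $\mathcal S_\varepsilon^{i}$ term by term and compares exponents.

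\medskip

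For the self-mapping property, fix $\vartheta\in\mathcal D_\varepsilon$ and analyze each term in
\[
\mathcal S^i_\varepsilon(a,b,\rho,\omega,\vartheta) = r_\varepsilon\partial_r v_{\phi_{\vartheta_i}-\vartheta_i}+r_\varepsilon\partial_r\pi''_{r_\varepsilon}(u_{\varepsilon,R,a})\Lambda_{i}-r_\varepsilon\partial_r\pi''_{r_\varepsilon}(\beta\mathcal V_{\rho,\omega+\vartheta,i})+\cdots
\]
using the estimates available: the Poisson bound \eqref{po} for the harmonic extensions, Proposition \ref{propo002} for $u_{\varepsilon,R,a}$ (whose high-frequency projection decays in $\varepsilon$ due to the restriction on $R,a$), Theorem \ref{teo002} for $\mathcal U_{\varepsilon,R,a,\phi_\vartheta}$, Theorem \ref{teo001} for $\mathcal V_{\rho,\omega+\vartheta}$, and the fact that $f=O(|x|^{2})$ together with the support/size of $G_p$ and $h$. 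Assembling these, each term is $O(r_\varepsilon^{2+d_n-n/2})$ in $C^{1,\alpha}(\mathbb S^{n-1}_{r_\varepsilon})$, which after rescaling to $\mathbb S^{n-1}$ and applying the bounded inverse of the isomorphism $\mathcal Z$ stays of the same order. Since $2-\tfrac{49}{25}=\tfrac{1}{25}>0$, we have $r_\varepsilon^{2+d_n-n/2}=o\!\left(r_\varepsilon^{49/25+d_n-n/2}\right)$, so for $\varepsilon$ small enough $\|\mathcal H_\varepsilon(a,b,\rho,\omega,\vartheta)\|_{(2,\alpha)}\leq r_\varepsilon^{49/25+d_n-n/2}$, giving $\mathcal H_\varepsilon(a,b,\rho,\omega,\vartheta)\in\mathcal D_\varepsilon$.

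\medskip

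For the contraction property, take $\vartheta^{(1)},\vartheta^{(2)}\in\mathcal D_\varepsilon$. The linearity of the Poisson extension, together with the Lipschitz estimates \eqref{eq051} in Theorem \ref{teo002} and the analogous one in Theorem \ref{teo001}, yields
\[
\|\mathcal S_\varepsilon(a,b,\rho,\omega,\vartheta^{(1)}_{r_\varepsilon}) - \mathcal S_\varepsilon(a,b,\rho,\omega,\vartheta^{(2)}_{r_\varepsilon})\|_{(1,\alpha),r_\varepsilon;d} \leq C r_\varepsilon^{\delta}\,\|\vartheta^{(1)}-\vartheta^{(2)}\|_{(2,\alpha),r_\varepsilon;d}
\]
for some $\delta>0$ independent of $\varepsilon$; rescaling and composing with $\mathcal Z^{-1}$ preserves this bound. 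Choosing $\varepsilon_0$ smaller if necessary so that $Cr_\varepsilon^{\delta}\leq \tfrac{1}{2}$, we obtain that $\mathcal H_\varepsilon(a,b,\rho,\omega,\cdot)$ is a $\tfrac{1}{2}$-contraction on $\mathcal D_\varepsilon$. The Banach fixed point theorem then produces the desired fixed point.

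\medskip

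The main obstacle, as usual in this type of gluing argument, is the accounting of exponents: one must verify that every contribution to $\mathcal S_\varepsilon^{i}$, in particular the ones coming from the cutoff function $f$ multiplying $\rho_i G_p$, $u_{\omega_i+\vartheta_i}$, and $\mathcal V_{\rho,\omega+\vartheta,i}$, together with the high-frequency part of $u_{\varepsilon,R,a}$ (which is produced by the translation parameter $a$), really beat the target size $r_\varepsilon^{49/25+d_n-n/2}$ once the bounds $|a|^2\leq r_\varepsilon^{d_n-n/2}$ and $|\rho_i|^2\leq r_\varepsilon^{d_n-2+3n/2}$ are used. This is exactly where the choices $d_3=0$, $d_4=d_5=1$ and the range of $s$ in Remark \ref{rem001} enter, and where the compatibility between the interior weight $\mu\in(1,5/4)$ and the exterior weight $\nu\in(3/2-n,2-n)$ becomes decisive.
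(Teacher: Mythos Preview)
Your proposal is correct and follows essentially the same route as the paper. The paper's own argument for this lemma is compressed to a single sentence preceding the statement: from the estimates in Theorems~\ref{teo002} and~\ref{teo001} one gets $\mathcal S_\varepsilon(a,b,\rho,\omega,\vartheta_{r_\varepsilon})=O(r_\varepsilon^{2+d_n-n/2})$, and the lemma follows. You have simply unpacked this into the standard Banach fixed point scheme---self-mapping from the size bound (using $2-\tfrac{49}{25}>0$) and contraction from the Lipschitz estimates \eqref{eq051} and the analogous one in Theorem~\ref{teo001}---which is precisely the mechanism the paper is invoking implicitly, in line with how Theorems~\ref{teo002} and~\ref{teo001} themselves were proved.
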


We denote this fixed point simply by $\vartheta=(\vartheta_1,\ldots, \vartheta_d)$, which depends continuously on $\varepsilon$, $a$, $b$, $\rho$ and $\omega$. By Proposition \ref{propo002}, in the boundary $\partial B_{r_\varepsilon}(p)$, we get that
\begin{equation}\label{eq055}
    \begin{array}{rcl}
\mathcal A_\varepsilon(R,a,\phi_{\vartheta},h)_i & = &\displaystyle \Lambda_i+b\Lambda_i+\frac{\varepsilon^2}{4(1+b)}\Lambda_ir_\varepsilon^{2-n}+ \mathcal U_{\varepsilon,R,a,\phi_{\vartheta}}\\
\\
& & +((n-2)u_{\varepsilon,R}
+r_\varepsilon\partial_ru_{\varepsilon,R}(r_\varepsilon\theta))\Lambda_ir_\varepsilon a\cdot\theta\\
\\
& & +h_i+v_{\phi^i_{\vartheta}} +O(|a|^2r_\varepsilon^2)+O(\varepsilon^{2\frac{n+2}{n-2}}r_\varepsilon^{-n}),
\end{array}
\end{equation}
where $h=(h_1,\ldots,h_{d-1},0)$ is defined in \eqref{eq031}. If $|x|=r$ we have that $h(x)=(\eta,0)+r_\varepsilon(\langle A_1,\theta\rangle,\ldots,\langle A_{d-1},\theta\rangle,0 )$ and $\partial_r h(x)=(\langle A_1,\theta\rangle,\ldots,\langle A_{d-1},\theta\rangle,0 )$, with $\eta=(\eta_1,\ldots,\eta_{d-1})\in\mathbb R^{d-1}$ and $A_i\in \mathbb R^n$, for all $i=1,\ldots,d-1$. For $i=1,\ldots,d-1$, we set $\eta_i=-b\Lambda_i+b_i\Lambda_i$, with $|b_2|\leq 1/2$. Let us call $b_d:=b$.

In the exterior manifold $M_r$, in conformal normal coordinates system in the neighborhood of $\partial M_{r_\varepsilon}$, namely $\Omega_{r_\varepsilon,\frac{1}{2}r_1}$, using that $\beta=1+f$, we have
\begin{equation}\label{eq056}
    \begin{array}{rcl}
\beta\mathcal B_{r_\varepsilon}(\rho,\omega+\vartheta)_i & = & \Lambda_i+\rho_i r_\varepsilon^{2-n}+u_{\omega_i+\vartheta_i}+ f\Lambda_i+O(|\rho|r_\varepsilon^{3-n})\\
\\
& & +fu_{\omega_i+\vartheta_i}+f\mathcal V_{\rho,\omega+\vartheta,i}.
\end{array}
\end{equation}

To solve the projected part of the system \eqref{eq009} in the low frequency space, we will now project \eqref{eq009} in the direction of the constant functions and in the direction of the coordinate functions. By projecting the system \eqref{eq009} on the set of functions spanned by the constant function, this yields the equations

\begin{equation}\label{eq024}
\left\{\begin{array}{rcl}
\displaystyle b_i\Lambda_i+\left(\frac{\varepsilon^2}{4(1+b)}\Lambda_i-\rho_i\right)r_\varepsilon^{2-n} & = & \mathcal H^{i,0}_{\varepsilon}(a,b,\rho,\omega)\\
\displaystyle(2-n)\left(\frac{\varepsilon^2}{4(1+b)}\Lambda_i-\rho_i\right)r_\varepsilon^{2-n} & =  & r_\varepsilon\partial_r\mathcal H^{i.0}_{\varepsilon}(a,b,\rho,\omega).
\end{array}\right.
\end{equation}
where $\mathcal H^{i,0}_{\varepsilon}(a,b,\rho,\omega)$ denotes the projection of the remaining terms of \eqref{eq055} and \eqref{eq056} in the space of the constant functions, which by the estimates obtained in the previous sections has the order $ O(r_\varepsilon^{2+d_n-\frac{n}{2}})$. It is not difficult to see that a fixed point of the map $\mathcal G_{\varepsilon,a,\omega}:\mathcal D_{0,\varepsilon}\rightarrow\mathbb R^{2d}$, given by $\mathcal G_{\varepsilon,a,\omega}(b,\rho)=(\mathcal G_1,\ldots,\mathcal G_d,\mathcal F_1,\ldots,\mathcal F_d)$, where
$$\begin{array}{rcl}
     \mathcal G_i & = & \displaystyle\frac{r_\varepsilon}{(n-2)\Lambda_i}\partial_r\mathcal H^{i,0}_{\varepsilon}(a,b,\rho,\omega)+\frac{1}{\Lambda_i}\mathcal H^{i,0}_{\varepsilon}(a,b,\rho,\omega)\\
     \\
   \mathcal F_i & = & \displaystyle \frac{\varepsilon^2}{4(1+b)}\Lambda_i+\frac{r_\varepsilon^{n-1}}{n-2}\partial_r\mathcal H^{i,0}_{\varepsilon}(a,b,\rho,\omega),
\end{array}$$
for each $i=1,\ldots, d$, and
$$\mathcal D_{0,\varepsilon}:=\left\{(b,\rho)\in\mathbb R^d\times\mathbb R^d;|b_i|\leq 1/2\;\mbox{ and }\;|\rho_i|^2\leq  r_\varepsilon^{d_n-\frac{51}{25}+\frac{3n}{2}}\right\},$$
is a solution to the system \eqref{eq024}.

Since $\mathcal H^{i,0}_{\varepsilon}(a,b,\rho,\omega) = O(r_\varepsilon^{2+d_n-\frac{n}{2}})$, we use the previous estimates to obtain the following lemma.

\begin{lemma}\label{lem002}
There exists a constant $\varepsilon_1>0$ such that if $\varepsilon\in(0,\varepsilon_1)$, $a\in\mathbb R^n$ with $|a|^2\leq r_\varepsilon^{d_n-\frac{n}{2}}$ and $\omega\in C^{2,\alpha}(\mathbb S_{r_\varepsilon}^{n-1})^d$ belongs to the space spanned by the coordinate functions and with norm bounded by $r_\varepsilon^{\frac{49}{25}+d_n-\frac{n}{2}}$, then the map $\mathcal G_{\varepsilon,a,\omega}$ has a fixed point in $\mathcal D_{0,\varepsilon}$ which depends continuously on the parameter $\varepsilon$, $a$ and $\omega$.
\end{lemma}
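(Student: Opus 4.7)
The plan is to obtain the fixed point by applying Brouwer's theorem to the continuous self-map of the compact convex set $\mathcal D_{0,\varepsilon}\subset\mathbb R^d\times\mathbb R^d$. First I would verify that $\mathcal G_{\varepsilon,a,\omega}$ maps $\mathcal D_{0,\varepsilon}$ into itself. The fundamental ingredient is the estimate $\mathcal H^{i,0}_{\varepsilon}(a,b,\rho,\omega)=O(r_\varepsilon^{2+d_n-n/2})$ (with a corresponding control on its radial derivative), which follows from the a priori bounds of Theorems \ref{teo002} and \ref{teo001} applied to the data generated by the fixed point $\vartheta=\vartheta(a,b,\rho,\omega)$ produced in Lemma \ref{lem001}, together with the scaling $R^{(2-n)/2}=2(1+b)\varepsilon^{-1}$ and the hypothesis \eqref{H1}. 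For the $\mathcal G_i$ component, both summands inherit the bound $O(r_\varepsilon^{2+d_n-n/2})$, which for $3\leq n\leq 5$ tends to zero as $\varepsilon\to 0$, so $|\mathcal G_i|\leq 1/2$ once $\varepsilon$ is small.

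For the $\mathcal F_i$ component the leading term $\tfrac{\varepsilon^2}{4(1+b)}\Lambda_i$ has magnitude comparable to $\varepsilon^2$, while the remainder $\tfrac{r_\varepsilon^{n-1}}{n-2}\partial_r\mathcal H^{i,0}_\varepsilon$ is of strictly smaller order. Thus $|\mathcal F_i|^2\leq C\varepsilon^4=Cr_\varepsilon^{4/s}$. The upper bound $s<4/(d_n-2+3n/2)$ imposed in Remark \ref{rem001} forces $4/s>d_n-2+3n/2>d_n-\tfrac{51}{25}+\tfrac{3n}{2}$, so $|\mathcal F_i|^2<r_\varepsilon^{d_n-51/25+3n/2}$ for $\varepsilon$ sufficiently small, which places $\mathcal F$ inside the prescribed ball. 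Together with the bound on $\mathcal G$, this shows $\mathcal G_{\varepsilon,a,\omega}(\mathcal D_{0,\varepsilon})\subset\mathcal D_{0,\varepsilon}$.

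Continuity of $\mathcal G_{\varepsilon,a,\omega}$ as a function of $(b,\rho)$, and jointly of $(\varepsilon,a,\omega)$, follows from the continuous dependence on parameters of the solutions produced in Theorems \ref{teo002} and \ref{teo001}, both of which are obtained as Banach contractions, combined with continuity of the auxiliary fixed point $\vartheta$ supplied by Lemma \ref{lem001}. Brouwer's theorem then delivers a fixed point $(b,\rho)\in\mathcal D_{0,\varepsilon}$. To upgrade existence to the continuous dependence asserted in the statement, I would rescale $\rho\mapsto\varepsilon^{-2}\rho$ so that the fixed-point equations become, at leading order, $b_i\approx 0$ and $\varepsilon^{-2}\rho_i\approx \Lambda_i/4$, and verify directly in these rescaled coordinates that the map is a contraction on a neighborhood of this leading-order point. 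The Banach fixed point theorem then gives uniqueness in that neighborhood, whence continuous dependence on $(\varepsilon,a,\omega)$.

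The main obstacle is the final step: checking that, after rescaling, the nonlinear corrections really are contractive. One must track how the auxiliary objects $\phi_\vartheta$, $\mathcal U_{\varepsilon,R,a,\phi_\vartheta}$ and $\mathcal V_{\rho,\omega+\vartheta}$ vary under perturbations of $(b,\rho)$, and show that the induced variations of $\mathcal H^{i,0}_\varepsilon$ are of order strictly smaller than the scales defining $\mathcal D_{0,\varepsilon}$. This is subtle because the explicit $b$-dependence in $\tfrac{\varepsilon^2}{4(1+b)}\Lambda_i$ is of order $\varepsilon^2$, comparable with the natural size of $\rho$; thus contraction can only be achieved in the rescaled $\rho$ variable and requires a careful bookkeeping of how $R=R(\varepsilon,b)$ propagates through all the estimates of Sections \ref{sec002} and \ref{sec003}.
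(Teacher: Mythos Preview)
Your approach via Brouwer on the compact convex set $\mathcal D_{0,\varepsilon}$ is exactly what the paper (tersely) does: it records only the estimate $\mathcal H^{i,0}_\varepsilon=O(r_\varepsilon^{2+d_n-n/2})$ and declares the lemma, and your size checks for $\mathcal G_i$ and $\mathcal F_i$ --- in particular the use of the upper bound $s<4/(d_n-2+3n/2)$ from Remark~\ref{rem001} to place $\mathcal F$ inside the $\rho$-ball --- are the correct unpacking of that assertion. Your caution about continuous dependence is well-placed, since the paper does not justify this point any more carefully than you do, and the rescaled-contraction argument you sketch is the natural way to close the gap.
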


Finally, we project the system \eqref{eq009} over the space of functions spanned by the coordinate functions. We get
\begin{equation}\label{eq032}
    \left\{\begin{array}{rcl}
     F(r_\varepsilon)\Lambda_i r_\varepsilon a_j+r_\varepsilon A_{ij}-\omega_{ij} & = & \mathcal H^{i,j}_{\varepsilon}(a,\omega)\\
     G(r_\varepsilon)\Lambda_ir_\varepsilon a_j+r_\varepsilon A_{ij}-(1-n)\omega_{ij} &= & r_\varepsilon\partial_r\mathcal H^{i,j}_{\varepsilon}(a,\omega)
\end{array}\right.,
\end{equation}
$i=1,\ldots,d$ and $j=1,\ldots,n$, where  $\omega=(\omega_1,\ldots,\omega_d)$, $\omega_i=\displaystyle\sum_{k=1}^n\omega_{ik}e_k$,
$$\begin{array}{rcl}
     F(r_\varepsilon) & = & (n-2)u_{\varepsilon,R}(r_\varepsilon\theta)+r_\varepsilon\partial_ru_{\varepsilon,R}(r_\varepsilon\theta),\\
     \\
     G(r_\varepsilon) & = & (n-2)u_{\varepsilon,R}(r_\varepsilon\theta)+nr_\varepsilon\partial_ru_{\varepsilon,R}(r_\varepsilon\theta)+r_\varepsilon^2\partial_r^2u_{\varepsilon,R}(r_\varepsilon\theta),
\end{array}$$
 and $\mathcal H^{i,j}_{\varepsilon}(a,\omega)$ denotes the projection of the remaining terms of \eqref{eq055} and \eqref{eq056} in the space of the coordinate functions, which by the estimates obtained in the previous sections has the order $ O(r_\varepsilon^{2+d_n-\frac{n}{2}})$. Here $A_{i}=(A_{i1},\ldots,A_{in})\in\mathbb R^n$ appears when $i=1,\ldots,d-1$.

Using Proposition \ref{propo002} and that $R^{\frac{2-n}{2}}=2(1+b)\varepsilon^{-1}$, we obtain that $F(r_\varepsilon)$ and $G(r_\varepsilon)$ satisfy the estimate 
$(n-2)(1+b)+O(\varepsilon^{2-s(n-2)}),$ with $2-s(n-2)>0$ (see Remark \ref{rem001}). Now, we choose 
$$A_{ij}=-(n-2)(1+b)\Lambda_i (a_j-\alpha_{ij}).$$ 
for $i=1,\ldots,d-1$ and $j=1,\ldots,n$. Let us call $\alpha_{dj}:=a_j$.

With this choice we see that a solution of \eqref{eq032} is a fixed point of the map $\mathcal K_{j,\varepsilon}:\mathcal D_{j,\varepsilon}\rightarrow\mathbb R^{2d},$ given by
$ \mathcal K_{j,\varepsilon}(\alpha_{j},\omega_j)=(\mathcal K_1,\ldots,\mathcal K_d,\mathcal Y_1,\ldots,\mathcal Y_d)$, where
$$\begin{array}{rcl}
     \mathcal K_i & = & \displaystyle\dfrac{r_\varepsilon\partial_r\mathcal H^{i,j}_{\varepsilon}+(n-1)\mathcal H^{i,j}_{\varepsilon}}{n(n-2)(1+b)\Lambda_ir}+O(\varepsilon^{2-s(n-2)})\alpha_{dj}\\
     \\
     \mathcal Y_i & = & \displaystyle n^{-1}(r_\varepsilon\partial_r\mathcal H^{i,j}_{\varepsilon}-\mathcal H^{i,j}_{\varepsilon})+O(\varepsilon^{2-s(n-3)})\alpha_{dj}
\end{array}$$
for $i=1,\ldots, d$, with $2-s(n-2)>0$, defined in the subset
$$\mathcal D_{j,\varepsilon}:=\{(\alpha_j,\omega_j)\in\mathbb R^d\times\mathbb R^d;|\alpha_{ij}|^2\leq \alpha_n r_\varepsilon^{d_n-\frac{n}{2}},|\omega_{ij}|\leq \beta_n r_{\varepsilon}^{\frac{49}{25}+d_n-\frac{n}{2}}\},$$
for some positive constants $\alpha_n$ e $\beta_n$ which depend only on $n$. Therefore, by the previous estimates, we obtain the following lemma.
\begin{lemma}\label{lem003}
There is a constant $\varepsilon_2>0$ such that if $\varepsilon\in(0,\varepsilon)$ then, for each $j=1,\ldots,n$, the system \eqref{eq032} has a solution $(\alpha_j,\omega_j)\in\mathcal D_{j,\varepsilon}$.
\end{lemma}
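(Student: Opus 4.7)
The plan is to apply Brouwer's fixed point theorem to the map $\mathcal K_{j,\varepsilon}:\mathcal D_{j,\varepsilon}\to\mathbb R^{2d}$, noting that $\mathcal D_{j,\varepsilon}$ is a product of $2d$ closed intervals, hence a compact convex subset of $\mathbb R^{2d}$. The two things I must check are (i) that $\mathcal K_{j,\varepsilon}$ sends $\mathcal D_{j,\varepsilon}$ into itself for $\varepsilon$ sufficiently small, and (ii) that $\mathcal K_{j,\varepsilon}$ is continuous on $\mathcal D_{j,\varepsilon}$.

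For the self-map property, the crucial input is the bound $\mathcal H^{i,j}_\varepsilon(a,\omega)=O(r_\varepsilon^{2+d_n-n/2})$, with the same bound for $r_\varepsilon\partial_r\mathcal H^{i,j}_\varepsilon$. These are obtained by collecting the estimates in Theorems \ref{teo002} and \ref{teo001} on the remainders $\mathcal V_{\rho,\omega+\vartheta}$ and $\mathcal U_{\varepsilon,R,a,\phi_\vartheta}$, together with the Poisson estimates \eqref{po} and \eqref{eq052} for $\omega$, the expansion of $u_{\varepsilon,R,a}$ from Proposition \ref{propo002}, and the fact that $\beta=1+f$ with $f=O(|x|^2)$. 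Plugging these into the formula for $\mathcal K_i$ and using the $r_\varepsilon^{-1}$ appearing in the denominator yields $|\mathcal K_i|\leq C r_\varepsilon^{1+d_n-n/2}+C\varepsilon^{2-s(n-2)}|\alpha_{dj}|$. Inside $\mathcal D_{j,\varepsilon}$ we have $|\alpha_{dj}|\leq (\alpha_n r_\varepsilon^{d_n-n/2})^{1/2}$, so I only need to verify the exponent inequalities $2+d_n\geq n/2$ and $2-s(n-2)>0$. The first is immediate in the three cases $n=3,4,5$ with $d_3=0$ and $d_4=d_5=1$; the second follows from $s<4(d_n-2+3n/2)^{-1}$ in Remark \ref{rem001}. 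Taking $\alpha_n$ large and $\varepsilon$ small then forces $|\mathcal K_i|^2\leq \alpha_n r_\varepsilon^{d_n-n/2}$. The bound $|\mathcal Y_i|\leq \beta_n r_\varepsilon^{49/25+d_n-n/2}$ follows from the same estimates, using $2>49/25$ on the leading $\mathcal H^{i,j}_\varepsilon$ term and $2-s(n-3)>0$ on the $\alpha_{dj}$ correction.

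Continuity of $\mathcal K_{j,\varepsilon}$ in $(\alpha_j,\omega_j)$ reduces to continuity of $\mathcal H^{i,j}_\varepsilon$ and of $r_\varepsilon\partial_r\mathcal H^{i,j}_\varepsilon$, which are the projections of \eqref{eq055} and \eqref{eq056} onto the coordinate-function subspace. These depend continuously on $(\alpha_j,\omega_j)$ through: the continuity of the solutions $\mathcal V_{\rho,\omega+\vartheta}$ and $\mathcal U_{\varepsilon,R,a,\phi_\vartheta}$ delivered by Theorems \ref{teo001} and \ref{teo002}; the continuity of the fixed point $\vartheta$ from Lemma \ref{lem001}; and the continuity of the fixed point $(b,\rho)$ from Lemma \ref{lem002}. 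With the self-map property and continuity in hand, Brouwer's theorem applied on $\mathcal D_{j,\varepsilon}\subset\mathbb R^{2d}$ produces the required fixed point for each $j=1,\ldots,n$, and one sets $\varepsilon_2$ to be the minimum of the finitely many thresholds used in the above estimates.

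The main obstacle is the factor $r_\varepsilon^{-1}$ in the definition of $\mathcal K_i$, which inflates the leading-order size of the image and threatens to push $|\mathcal K_i|$ outside the ellipsoid defining $\mathcal D_{j,\varepsilon}$; it is precisely the dimensional restriction $3\leq n\leq 5$, combined with the choice of the exponent $s$ in Remark \ref{rem001}, that saves the estimate. A secondary subtlety is that the perturbative error $O(\varepsilon^{2-s(n-2)})\alpha_{dj}$ in $\mathcal K_i$ (and its analogue in $\mathcal Y_i$) makes the map depend on the variable $\alpha_{dj}$ itself, but this is a linear dependence with coefficient tending to zero as $\varepsilon\to 0$, so it neither destroys the self-map property nor the continuity required by Brouwer.
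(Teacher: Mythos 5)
Your proposal is correct and follows exactly the route the paper intends (and leaves implicit with ``by the previous estimates''): verify that the bound $\mathcal H^{i,j}_{\varepsilon}=O(r_\varepsilon^{2+d_n-\frac{n}{2}})$ together with $2+d_n>\frac{n}{2}$ and the choice of $s$ in Remark \ref{rem001} makes $\mathcal K_{j,\varepsilon}$ a continuous self-map of the compact convex set $\mathcal D_{j,\varepsilon}$, then invoke Brouwer's fixed point theorem for each $j$. The exponent checks you carry out are precisely the ones needed, so no gap to report.
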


Now we are ready to proof the main theorem of this work.

\begin{proof}[Proof of Theorem \ref{teo003}]
By Theorem \ref{teo002} we have a family of solution $\beta^{-1}\mathcal  A_\varepsilon(R,a,\phi,h)$ to the system \eqref{S} in $\overline{B_{r_\varepsilon}(p)}\subset M$, for small enough $\varepsilon>0$. Also, by Theorem \ref{teo003} we find a family of solution $\mathcal B_{r_\varepsilon}(\rho,\varphi)$ to the system \eqref{S} in $M\backslash B_{r_\varepsilon}(p)$, for small enough $\varepsilon>0$. From Lemmas \ref{lem001}, \ref{lem002} and \ref{lem003}, we find $\varepsilon_0>0$ such that for all $\varepsilon\in(0,\varepsilon_0)$ there are parameters $R$, $a$, $\phi$, $\rho$ and $\varphi$ for which the functions $\mathcal  A_\varepsilon(R,a,\phi,h)$ and  $\mathcal B_{r_\varepsilon}(\rho,\varphi)$ coincide up to order one in $\partial B_{r_\varepsilon}(p)$. Hence using elliptic regularity it follows that the function $\mathcal V_\varepsilon$ defined  by $\mathcal V_\varepsilon=\mathcal  A_\varepsilon(R,a,\phi,h)$ in $B_{r_\varepsilon}(p)$ and  $\mathcal V_\varepsilon=\mathcal B_{r_\varepsilon}(\rho,\varphi)$ in $M\backslash B_{r_\varepsilon}(p)$ is a positive smooth solutions in $M\backslash \{p\}$ to the system \eqref{S} and satisfies the conditions \eqref{cond001}, \eqref{cond002} and \eqref{cond003} in the theorem.
\end{proof}

\section{High dimension: \texorpdfstring{$n\geq 6$}{Lg}}\label{sec011}

In this section we explain briefly how we can use the Weyl assumption to proof the main theorem for dimension $n\geq 6$. In the spirit of \cite{almir} this assumption is used in the interior analysis to assure that we can reduce the problem to a fixed point problem. In fact, in these dimensions the right hand side of \eqref{2.4} does not have the right decay to belong to the domain of the right inverse construct in Section \ref{sec008}. We have to be able to prove a similar lemma as Lemma \ref{lem004}. Since the main difference is in the interior analysis \eqref{sec009}, then we will explain only this part in order to get Theorem \ref{teo002} for high dimensions. The gluing construction (Section \ref{sec005}) follows in a similar way. For more details we refer the reader to \cite{almir}.

As in Section \ref{sec009} we perform a conformal change of metric to get the metric $g$ of that section. In normal coordinates at $p$, using the assumption in the Weyl tensor, namely, 
$$\nabla^k W_g(p)=0,\;\;k=1,\ldots,\left[\frac{n-6}{2}\right],$$
we get that $R_g=O(|x|^{\frac{n-4}{2}})$. Moreover, the it can be expand as $R_g=f+O(|x|^{n-3})$, where $f=O(|x|^{\frac{n-4}{2}})$ belongs to the high frequency space in each $\partial B_r(p)$. This decay it is not enough to the term $R_gu_{\varepsilon,R,a}$ of the right hand side of \eqref{2.4} to belong to the domain of the right inverse. To overcome this difficult an auxiliary function $w_{\varepsilon,R}$ is introduced. The function  $\mathcal W_{\varepsilon,R}\in C_{\mu_n;d}
^{2,\alpha}(B_r(0)\backslash\{0\})$, where $\mu_n=1$ for $n$ even and $1/2$ for $n$ odd, is such that $\mathcal L_{\varepsilon,R}(\mathcal W_{\varepsilon,R})=c_nfu_{\varepsilon,R}$, which the existence is given by Proposition \ref{propo003}. Therefore, instead we consider \eqref{SS} we consider $H_{g_0}(\beta^{-1}(\mathcal{U}_{\varepsilon,R,a} +\mathcal W_{\varepsilon,R}+h+\mathcal V_\phi+ \mathcal{V})) = 0$. In this way the bad term in \ref{2.4} disappear and we can apply the right inverse obtained in Corollary \ref{inver} to get the correspondent Theorem \ref{teo002} for high dimensions.




%
%

\end{document}